\newtheorem{theorem}{Theorem} 
\newtheorem*{conj*}{Conjecture}
\newtheorem{conj}[theorem]{Conjecture}
\newtheorem{thm}{Theorem}[section] 
\newtheorem{prop}[thm]{Proposition} 
\newtheorem{lem}[thm]{Lemma}
\newtheorem{cor}[thm]{Corollary}
\theoremstyle{definition}
\newtheorem{rem}[thm]{Remark}
\theoremstyle{definition}
\newtheorem*{example}{Example}
\theoremstyle{remark}
\DeclareMathOperator{\Aut}{Aut}
\DeclareMathOperator{\Irr}{Irr}
\DeclareMathOperator{\he}{ht}
\DeclareMathOperator{\de}{dl}
\DeclareMathOperator{\GL}{GL}
\DeclareMathOperator{\SL}{SL}
\DeclareMathOperator{\SO}{SO}
\DeclareMathOperator{\dlen}{dl}
\DeclareMathOperator{\hei}{ht}
\newcommand \ibr {{\textup{IBr}}_p}
\newcommand \opp {{\bf{O}}_{p}}
\newcommand \opd {{\bf{O}}_{p'}} 
\newcommand{\la}{\lambda}
\newcommand{\Syl}{{\mathrm {Syl}}}
\DeclareMathOperator{\Sp}{Sp}
\DeclareMathOperator{\Sym}{S}
\DeclareMathOperator{\PGL}{PGL}
\DeclareMathOperator{\GU}{GU}
\newcommand{\cd}{{\mathrm {cd}}}
\newcommand{\FF}{\mathbb{F}}
\newcommand{\Centralizer}{\mathbf{C}}
\newcommand{\cent}{\Centralizer}
\newcommand{\Center}{\mathbf{Z}}
\newcommand{\zent}{\Center}
\newcommand{\wt}{\widetilde}
\newcommand{\tw}[1]{{}^#1}
\numberwithin{equation}{section}
\newcommand{\Alt}{{\mathrm {A}}}
\newcommand{\eps}{\epsilon}
\newcommand{\bG}{\mathbf{G}}
\def\irr#1{{\rm Irr}(#1)}
\newcommand{\type}{\operatorname}
\begin{document}

\title[Derived Length and character degrees]{Sylow subgroups and the number of irreducible characters of degrees divisible by a prime $p$}

\author[James P. Cossey]{James P. Cossey}
\address{Department of Mathematics, University of Akron, Akron, OH 44325, USA}
\email{cossey@uakron.edu}

\author[M. L. Lewis]{Mark L. Lewis}
\address{Department of Mathematical Sciences, Kent State University, Kent, OH 44266, USA}
\email{lewis@math.kent.edu}

\author[A. A. Schaeffer Fry]{A. A. Schaeffer Fry}
\address{Department of Mathematics, University of Denver, Denver, CO 80210, USA}
\email{mandi.schaefferfry@du.edu}

\author[H. P. Tong-Viet]{Hung P. Tong-Viet}
\address{Department of Mathematics and Statistics, Binghamton University, Binghamton, NY 13902-6000, USA}
\email{htongvie@binghamton.edu}

\renewcommand{\shortauthors}{Cossey, Lewis, Schaeffer Fry, Tong-Viet}
\begin{abstract}
Let $G$ be a finite group and $p$ a prime. We establish an upper bound for the derived length of a Sylow $p$-subgroup of $G$ in terms of the number of irreducible characters of $G$ whose degrees are divisible by $p$. We also prove that if $B$ is a $p$-block of a finite $p$-solvable group $G$ with defect group $D$, then  the derived length of $D$ is at most one more than the number of ordinary irreducible characters of positive height in $B$.
\end{abstract}

\thanks{The third-named author is grateful for support from a CAREER grant from the U.S. National Science Foundation, DMS-2439897.}

\subjclass[2010]{Primary 20C15; Secondary 20D06, 20D10}



\keywords{Derived length, Sylow subgroups, $p$-singular characters}

\maketitle

\section{Introduction}

Let $G$ be a finite group and $p$ a prime. The It\^{o}--Michler theorem asserts that $p$ divides the degrees of no complex irreducible characters of $G$ if and only if $G$ possesses a normal abelian Sylow $p$-subgroup. Fix a Sylow $p$-subgroup $P$ of $G$, and let $n_p(G)$ denote the number of complex irreducible characters of $G$ whose degrees are divisible by $p$. We refer to such characters as \emph{$p$-singular characters}, and their degrees as \emph{$p$-singular character degrees}. We write $\Irr(G)$ for the set of complex irreducible characters of $G$, and $\cd(G)$ for the set of their degrees. 

From the It\^{o}--Michler theorem, if $n_p(G)=0$, then the derived length of $P$, denoted $\dlen(P)$, equals~$1$. In particular, in this case we have
$\dlen(P) = n_p(G) + 1.$

The finite groups satisfying $n_p(G)=1$ were completely classified by Guralnick \emph{et al.}~\cite{GGLMNT}. For such a group, we have $|\cd_p(G)|=1$, where $\cd_p(G)$ denotes the set of $p$-singular character degrees of $G$. Isaacs, Moret\'o, Navarro, and Tiep proved in~\cite{IMNT} that, for such groups $G$, the Sylow $p$-subgroup $P$ is metabelian; that is, $\dlen(P)\leq 2=|\cd_p(G)|+1$. In particular, they conjectured that 
$|\cd(P)| \leq |\cd_p(G)| + 1.$
Since $P$ is an $M$-group, Taketa's theorem (\cite[Corollary 5.13]{Isaacs}) yields $\dlen(P)\leq |\cd(P)|$. Moreover, it is clear that $|\cd_p(G)|\leq n_p(G)$.  Thus the above conjecture would imply that
\[
\dlen(P)\leq |\cd(P)| \leq  |\cd_p(G)| + 1 \leq n_p(G) + 1.
\]
Therefore, if the above conjecture holds, then $\dlen(P)\leq n_p(G)+1$. In \cite{FLZ}, the authors verify  Isaacs-Moret\'{o}-Navarro-Tiep's conjecture for symmetric groups, finite general linear or unitary groups.
Navarro also posed the following question  in \cite{Navarro_a1}: Is it true that $\dlen(P)\leq |\cd_p(G)|+1$?
In this paper, we prove the above weaker inequality.

\begin{theorem}\label{th:main}
Let $G$ be a finite group and let $p$ be a prime. Let $P$ be a Sylow $p$-subgroup of $G$. Then the derived length of $P$ is at most one more than the number of of irreducible characters of $G$ whose degrees are divisible by $p$; that is, $\dlen(P)\leq n_p(G)+1.$
\end{theorem}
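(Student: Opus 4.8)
The plan is to prove Theorem~\ref{th:main} by induction on $|G|$, reducing first to the case where $G$ has a normal subgroup that is either a $p'$-group or a $p$-group sitting at the bottom, and ultimately to a situation that can be attacked via Clifford theory together with a careful bookkeeping of how $p$-singular characters of $G$ restrict to and are induced from proper sections. The overall strategy is modelled on the standard approach to such character-counting/structure results: we want to find a proper normal subgroup $N\trianglelefteq G$ so that (i) a Sylow $p$-subgroup of $G/N$, call it $\overline P$, controls most of the derived length of $P$, and (ii) the $p$-singular characters of $G/N$ inflate to $p$-singular characters of $G$, while the remaining derived length of $P$ (the part ``inside $N$'') is accounted for by additional $p$-singular characters of $G$ not coming from $G/N$. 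If $\dlen(\overline P)\le n_p(G/N)+1$ by induction, and if we can produce $\dlen(P)-\dlen(\overline P)$ many further $p$-singular characters of $G$ lying over nontrivial characters of $N$, we are done.

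Concretely, first I would dispose of the easy reductions. If $\opd(G)\ne 1$, set $N=\opd(G)$; then $P$ embeds in $G/N$, so $\dlen(P)=\dlen(\overline P)\le n_p(G/N)+1\le n_p(G)+1$ since inflation of irreducible characters from $G/N$ sends $p$-singular ones to $p$-singular ones. If $\opp(G)\ne 1$, set $Z$ to be a minimal normal subgroup inside $\opp(G)$; here $\overline P=P/Z$ has $\dlen(\overline P)\ge \dlen(P)-1$, so by induction $\dlen(P)\le \dlen(\overline P)+1\le n_p(G/Z)+2\le n_p(G)+2$ — which is one too many, so this crude argument does not suffice and one must instead exhibit at least one $p$-singular character of $G$ that does \emph{not} have $Z$ in its kernel, unless $P$ is already abelian. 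This is the heart of the matter: for a group with $\opp(G)\ne 1$ and non-abelian Sylow $p$-subgroup, one needs a character of $p$-divisible degree lying over a nontrivial character of $Z$. After these reductions one is left to study groups with $\opp(G)=\opd(G)=1$, where one brings in the generalized Fitting subgroup $\Fit^*(G)=E(G)$, a product of non-abelian simple groups, and uses known information (e.g.\ from the classification and from \cite{GGLMNT}, \cite{IMNT}) about $p$-singular characters of almost simple groups to bound things.

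The key steps, in order, would be: \textbf{Step 1.} Reduce to $\opd(G)=1$ using the inflation argument above. \textbf{Step 2.} Handle the case $\opp(G)=P$ (i.e.\ $P\trianglelefteq G$) directly: here $G=P\rtimes H$ with $H$ a $p'$-group acting on $P$, and one must show $\dlen(P)\le n_p(G)+1$; the nontrivial $P$-orbits of the $H$-action and Clifford theory over $P$ produce the needed $p$-singular characters, and one induct on the chain $P\supseteq P'\supseteq P''\supseteq\cdots$, finding at each stage a character whose degree is divisible by $p$ because it lies over a character of $P^{(i)}/P^{(i+1)}$ that is not $H$-invariant (or is invariant but extends with $p$-divisible contribution). \textbf{Step 3.} In the general case with $\opd(G)=1$, let $N$ be a minimal normal subgroup; if $N$ is a $p$-group then $N\le\opp(G)$ and we are in (a subcase of) Step 2's territory relative to the action of $G/N$; if $N$ is non-abelian, use $\Fit^*$-theory and the simple-group input to find at least two distinct $p$-singular character degrees (or enough $p$-singular characters) and set up the induction on $G/N$. \textbf{Step 4.} Combine: glue the derived-length count of $\overline P$ (handled by induction) with the extra $p$-singular characters found in Steps 2--3 lying over nontrivial characters of $N$, checking that these are genuinely distinct from the inflated characters from $G/N$.

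The main obstacle I expect is \textbf{Step 2}, the normal Sylow case, and more precisely the inductive construction down the derived series of $P$: one must guarantee that at each level $P^{(i)}$ (for $i=0,\dots,\dlen(P)-1$) there is an irreducible character of $G$ of degree divisible by $p$ that ``sees'' the non-abelianness at that level, and that these characters are pairwise distinct. The difficulty is that a single $p$-singular character of $G$ might already ``witness'' several levels of the derived series at once, or conversely the action of the $p'$-complement $H$ might be too tame (e.g.\ when $H$ centralizes large quotients $P^{(i)}/P^{(i+1)}$) to force any character degree divisibility, so one needs a genuinely careful argument — likely using that if $H$ acts trivially on $P/P'$ then by coprime action $H$ acts trivially on $P$ (a standard fact), which forces $G=P\times H$ and $\dlen(P)\le 1$, contradiction; and then bootstrapping this observation up the derived series. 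Making this bookkeeping precise, and ensuring the extra characters found in the general case (Step 3) do not overlap with the inductive count, is where the real work lies; the almost-simple input in Step 3, while technically involved, is expected to follow fairly mechanically from the literature cited in the introduction.
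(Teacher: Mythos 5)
Your outline shares the paper's broad skeleton (induction on $|G|$ over a minimal normal subgroup, Clifford theory, simple-group input), but it is missing the two ideas that actually make that skeleton work. The first gap is exactly the case you call ``the heart of the matter'': a minimal normal $p$-subgroup $N$ with $n_p(G/N)=n_p(G)$, i.e.\ every $p$-singular character of $G$ contains $N$ in its kernel. Your proposed dichotomy --- ``exhibit a $p$-singular character not having $N$ in its kernel, unless $P$ is already abelian'' --- is not the right one and you give no argument for it: in this situation $P$ need not be abelian, and the correct conclusions are weaker. The paper closes this case with two specific tools absent from your plan: for $p$-solvable $G$ (Theorem~\ref{p-solvable}) a Clifford-theoretic argument showing that $N\cap P'>1$ would produce $\chi\in\Irr(G)$ with $p\mid\chi(1)$ lying over a nontrivial character of $N\cap P'$ (every constituent of $\chi_P$ fails to contain $P'$ in its kernel), contradicting the hypothesis; hence $N\cap P'=1$ and induction gives $P^{(n+1)}\le N\cap P'=1$. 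In the general case (Theorem~\ref{mathcal C}) the hypothesis forces, via It\^o--Michler, that $N$ is elementary abelian and that all characters of the stabilizer of a nontrivial $\nu\in\Irr(N)$ over $\nu$ have relative $p'$-degree, and then the Navarro--Tiep theorem \cite{NT} yields that $P/N$ is abelian, so $\dlen(P)\le 2\le n_p(G)+1$. Without one of these ingredients your induction cannot close. (Also, your fallback in Step 2 is wrong as stated: if $H$ acts trivially on $P/P'$ then $G=P\times H$, but that does not force $\dlen(P)\le 1$; the normal-Sylow situation is instead handled by Taketa's theorem, since distinct nonlinear degrees of $P$ lie under distinct $p$-singular characters of $G$.)

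The second gap is the simple-group input. What the inductive step with a nonabelian minimal normal subgroup $N\cong S^k$ actually needs is the precise statement of Theorem~\ref{th:simple groups}: for every nonabelian simple $S$ and every $p$, $\dlen(Q)\le n_p^*(S)$, the number of \emph{$\Aut(S)$-orbits} of $p$-singular characters of $S$, for $Q\in\Syl_p(S)$. One bounds the number of $G$-orbits of $p$-singular characters of $N$ by $n_p(G)-n_p(G/N)$, converts this to a bound on $\Aut(S)$-orbits in $\Irr_p(S)$, and then invokes exactly this theorem to get $\dlen(Q^k)=\dlen(Q)\le n_p(G)-n_p(G/N)$, which splices onto the inductive bound for $P^{(m+1)}\le N$. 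Your Step 3 (``at least two distinct $p$-singular character degrees,'' expected to ``follow fairly mechanically from the literature'') is far weaker than what is required --- Sylow $p$-subgroups of simple groups have unbounded derived length --- and no such orbit bound exists off the shelf; it is a substantial CFSG-based theorem proved case by case (sporadic groups, alternating groups via explicit $p$-core and hook constructions, and groups of Lie type in defining and non-defining characteristic via Lusztig series) in Section~\ref{s:simple}, and it constitutes the bulk of the paper.
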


We recall the Isaacs--Seitz conjecture (\cite[p. 206]{Isaacs}), which states that if $G$ is a finite solvable group, then
$\dlen(G)\leq |\cd(G)|.$
To prove Theorem \ref{th:main}, we first establish it for finite $p$-solvable groups and then reduce the general case to finite nonabelian simple groups. In particular, the following result is needed to complete the proof.

For a finite group $G$ and a prime $p$, let $n_p^*(G)$ denote the number of $\Aut(G)$-orbits on the set of all $p$-singular irreducible characters of $G$. Note that $$|\cd_p(G)| \le n_p^*(G) \le n_p(G).$$

\begin{theorem}\label{th:simple groups}
Let $G$ be a finite nonabelian simple group and $p$ a prime dividing $|G|$. Then, for any Sylow $p$-subgroup $P$ of $G$,
$\dlen(P) \le n_p^*(G).$
\end{theorem}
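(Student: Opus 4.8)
The plan is to prove this by the classification of finite simple groups, after first disposing of the case in which $P$ is abelian. If $\dlen(P)\le 1$ then, since $G$ is a nonabelian simple group, $G$ has no nontrivial normal Sylow $p$-subgroup, so in particular no normal abelian Sylow $p$-subgroup; by the It\^{o}--Michler theorem $n_p(G)\ge 1$, hence $n_p^*(G)\ge 1\ge \dlen(P)$ and we are done. Thus I may assume $P$ is nonabelian, so $\dlen(P)\ge 2$ and $|G|_p\ge p^3$. In every remaining case the scheme is the same: bound $\dlen(P)$ from above by an explicit function of the natural size parameter of $G$ (the degree $n$ for alternating groups, the Lie rank $\ell$ and the field $q=p^f$ or $r^f$ for groups of Lie type), and then exhibit at least $\dlen(P)$ pairwise $\Aut(G)$-inequivalent $p$-singular irreducible characters.

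For $G=A_n$ I would use that a Sylow $p$-subgroup of $S_n$ is a direct product of iterated wreath products $\underbrace{C_p\wr\cdots\wr C_p}_{k}$, each of derived length $k$, with $p^k\le n$; this gives $\dlen(P)\le \lfloor\log_p n\rfloor$ (with the usual correction when $p=2$, where $P$ has index $2$ in a Sylow $2$-subgroup of $S_n$). For the lower bound on $n_p^*(A_n)$, I would use that $\chi^\lambda\in\Irr(S_n)$ has $p$-divisible degree exactly when $\lambda$ is not a $p$-core, and that the number of non-$p$-core partitions of $n$ grows rapidly; restricting to $A_n$ and quotienting by $\Aut(A_n)$ (contained in $S_n$, together with the graph automorphism when $n=6$) costs only a bounded factor, and an elementary estimate shows $n_p^*(A_n)>\lfloor\log_p n\rfloor$ for all $n$ beyond a small bound, the remaining $n$ (and $A_6$) being checked directly.

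For $G$ of Lie type in the defining characteristic $p$, $P$ is a maximal unipotent subgroup $U$; from the commutator relations among root subgroups the nilpotency class of $U$ is at most the Coxeter number $h$, so $\dlen(U)\le\lceil\log_2(h+1)\rceil$, and in type $A_\ell$ one has the exact value $\lceil\log_2(\ell+1)\rceil$. On the character side, the Steinberg character (degree $|U|$, fixed by $\Aut(G)$) provides one $p$-singular orbit, and every other unipotent character has degree a polynomial in $q$ of positive $q$-valuation, hence is $p$-singular; the number of unipotent characters grows with $\ell$ far faster than $\log_2 h$, and the field and graph automorphisms permute them in orbits of bounded size (genuine mixing occurring only for the exceptional graph automorphisms of ${}^2B_2,{}^2G_2,{}^2F_4$ and of $B_2,G_2,F_4$ in characteristics $2,3,2$). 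So for rank above a small absolute constant this is immediate, and the low-rank groups $\PSL_2(q),\PSU_3(q),\PSp_4(q),\PSL_3(q),G_2(q),{}^3D_4(q),{}^2B_2(q),{}^2G_2(q),{}^2F_4(q)$ and their quotients, where $\dlen(U)\le 3$, are treated individually using the generic degree formulas, bringing in semisimple characters when the unipotent ones do not yield enough orbits.

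For $G$ of Lie type in cross characteristic $r\ne p$, a Sylow $p$-subgroup is abelian whenever $p$ is a primitive prime divisor of some $q^d-1$ occurring in $|G|$, which covers all but finitely many primes for each fixed $G$ and, for ranks bounded below, all odd primes; these cases are absorbed into the abelian analysis above. The remaining pairs $(G,p)$---essentially $p=2$, plus $p=3$ with $G$ of small rank---are finite in number up to the relevant parameters and are handled using the known structure of the $p$-local subgroups of $G$ to bound $\dlen(P)$, together with the Steinberg, unipotent, and semisimple characters to supply the orbits; the sporadic groups and the Tits group are dealt with by direct computation with their character tables. I expect the defining-characteristic case to be the main obstacle: making the comparison between the logarithmic growth of $\dlen(U)$ and an honest lower bound for the number of $\Aut(G)$-orbits of $p$-singular characters work uniformly across all types, and in particular settling the small-rank groups where $\dlen(U)$ equals $2$ or $3$ but only a short list of $p$-singular character degrees is available.
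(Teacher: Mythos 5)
There is a genuine gap, and it is in the cross-characteristic case, which you essentially dismiss. Your claim that a Sylow $p$-subgroup of a group of Lie type over $\FF_q$ with $p\nmid q$ is abelian ``for ranks bounded below, for all odd primes'' is backwards: for fixed odd $p$ with $e$ the order of $q$ modulo $p$, a Sylow $p$-subgroup of $\GL_n(q)$ (or of the corresponding symplectic/orthogonal/unitary groups) contains an iterated wreath product $C_{p^b}\wr C_p\wr\cdots\wr C_p$ as soon as $n/e\geq p$, and its derived length grows like $\log_p(n/e)$ (Lemma~\ref{lem:dlGL}). So the nonabelian cross-characteristic pairs $(G,p)$ are not ``essentially $p=2$ plus small-rank $p=3$'': they include all odd $p$ dividing $|W|$, for arbitrarily large rank, and this is precisely where most of the work lies. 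One must produce roughly $\log_p(n)$ pairwise $\Aut(S)$-inequivalent $p$-singular characters of the simple group; in the paper this is done by choosing semisimple $s\in G^\ast$ of order $p$ (or $4$) whose centralizer has a factor $\GL_{p^k}(\eta q^{e})$, feeding in $k+1$ unipotent characters of that factor of distinct $p$-divisible degrees (Lemma~\ref{lem:GLunip}), transporting them through Jordan decomposition, and then checking triviality on the center, irreducibility on restriction to $[G,G]$, and that no two are related by $\Aut$ and linear characters (Propositions~\ref{prop:classical}, \ref{prop:classical2}, Corollary~\ref{cor:classicalsimple}). None of this is present in your outline, and the Steinberg-plus-unipotent characters you invoke do not suffice here, since in cross characteristic most unipotent characters have degree prime to $p$.

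Two smaller problems. For $\Alt_n$, your criterion ``$\chi^\lambda$ has $p$-divisible degree exactly when $\lambda$ is not a $p$-core'' is false (take $\lambda=(n)$ with $p\mid n$: not a $p$-core, degree $1$); the correct device is to force the character into a block of non-maximal defect, e.g.\ by prescribing a $p$-core of size at least $p$ (at least $2p$ when $p=2$, so that $4\mid\chi^\lambda(1)$ and both constituents of the restriction of a self-conjugate character stay even), which is how the paper builds its explicit families of partitions. In defining characteristic your statement that every nontrivial unipotent character is $p$-singular also needs a correction when $q=p=2$ in types $\mathrm{B}/\mathrm{C}$, where four nontrivial unipotent characters have odd degree; this only shifts the count by a bounded amount, but it must be accounted for, as the paper does via \cite{Ma07}.
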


For sporadic simple groups, one can show that 
$
\dlen(P) \le |\cd_p(G)|,$
with equality occurring only in the case $(G,p)=(\mathrm{M}_{22},2)$. 
For alternating groups $\Alt_n$ with $n\ge 5$ and for simple groups of Lie type, we prove that $
\dlen(P) \le n_p^*(G).$
These results are established in Section~\ref{s:simple}.

We now turn to the theory of Brauer blocks. Let $p$ be a prime and let $B$ be a Brauer $p$-block of $G$ with defect group $D = D(B)$. Write $|P| = p^a$ and $|D| = p^d$, so that $d$ is called the \emph{defect} of the block $B$. Denote by $\Irr(B)$ the set of ordinary irreducible characters belonging to $B$. For each $\chi \in \Irr(B)$, the \emph{height} of $\chi$, denoted $\hei(\chi)$, is defined by the relation 
\[
\chi(1)_p = p^{\,a - d + \hei(\chi)},
\]
where, for a positive integer $n$, $n_p$ denotes the largest power of $p$ dividing $n$.  
Let $n_p(B)$ denote the number of characters $\chi \in \Irr(B)$ of positive height.

The \textbf{Brauer height zero conjecture}, recently settled in ~\cite{MNST}, asserts that for any $p$-block $B$ of $G$, the defect group $D$ is abelian if and only if every character in $\Irr(B)$ has height zero. Equivalently,
$n_p(B) = 0 $ if and only if $ \dlen(D) = 1.$
In particular, one always has $\dlen(D) \leq n_p(B) + 1$ in this case.
This inequality holds trivially when $B$ has $p$-defect zero (since $D = 1$). When $D$ has full defect, i.e. $d = a$, the quantity $n_p(B)$ counts the number of characters in $\Irr(B)$ whose degrees are divisible by $p$.

We therefore propose the following conjecture.

\begin{conj}\label{block}
Let $G$ be a finite group and $p$ a prime.  
Let $B$ be a Brauer $p$-block of $G$ with defect group $D$.  
Then there exists an absolute constant $k>0$, independent of $B,p$ and $D$, such that
$\dlen(D) \leq n_p(B) + k.$
\end{conj}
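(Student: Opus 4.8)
We outline a strategy modeled on the proof of Theorem~\ref{th:main}. That proof proceeds in three tiers: the $p$-solvable case, a reduction of the general case to nonabelian simple groups, and the simple-group analysis (Theorem~\ref{th:simple groups}). The plan is to run the block-theoretic analogue of each tier. We expect the argument to yield the constant $k=1$, i.e. the sharp form $\dlen(D)\le n_p(B)+1$; the real content lies in the reduction and in a block-theoretic strengthening of Theorem~\ref{th:simple groups}.

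\emph{The $p$-solvable case and the set-up.} When $G$ is $p$-solvable, Conjecture~\ref{block} holds with $k=1$ by the second main result of this paper. So we may fix a counterexample $(G,B,D)$ with $|G|$ minimal and $G$ not $p$-solvable. The reduction is feasible because both quantities in the inequality are invariant under the standard operations of block-theoretic Clifford theory: the Fong--Reynolds correspondence attached to a non-$G$-invariant character $\theta\in\Irr(N)$ lying under $B$, for $N\trianglelefteq G$, produces a block $B'$ of the stabilizer $G_\theta$ with the same defect group and a height-preserving bijection $\Irr(B')\to\Irr(B)$, so $n_p(B')=n_p(B)$; and domination by $O_{p'}(G)$ (once it is central, which Fong--Reynolds arranges) and factoring $O_p(G)$ into $D$ behave the same way. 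After these reductions $F^*(G)=O_p(G)\times E(G)$ with $E(G)=L_1\cdots L_r$ a central product of quasi-simple groups transitively permuted by $G$, and $Z(G)$ a $p$-group.

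\emph{Reduction to quasi-simple groups.} In this situation $B$ covers a $G$-orbit of blocks $b_1\otimes\cdots\otimes b_r$ of $E(G)$ with $b_i$ a block of $L_i$ of common defect group $\delta$, and up to conjugacy $D\cap E(G)=\delta_1\times\cdots\times\delta_r$ with $\delta_i\cong\delta$, $D$ normalizing this product with $D/(D\cap E(G)Z(G))$ a section of $G/E(G)Z(G)$. Two lemmas are then needed. A \emph{derived-length transfer lemma}: from this structure $\dlen(D)\le\dlen(\delta)+c$ for an absolute constant $c$, using that $\dlen(\delta_1\times\cdots\times\delta_r)=\dlen(\delta)$ and that absorbing $O_p(G)$ and the permutation action costs a bounded amount. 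A \emph{character-counting lemma}: $n_p(B)\ge n_p(b_1)$, proved by attaching to each positive-height $\psi\in\Irr(b_1)$ a distinct positive-height character of $B$ lying over a character of $E(G)$ whose $L_1$-component is $\psi$ — heights multiply across a central product, and extension or induction from $E(G)Z(G)$ to $G$ cannot destroy positive height. Minimality then forces $(L_1,b_1,\delta)$ to be a counterexample, so it suffices to prove: there is an absolute $k$ with $\dlen(\delta)\le n_p(b)+k$ for every $p$-block $b$ with defect group $\delta$ of every quasi-simple group.

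\emph{Quasi-simple groups and the main obstacle.} For sporadic groups and their covers this is a finite check. For $\Alt_n$, $\Sym_n$ and their covers, blocks are parametrized by $p$-cores, the defect group of a block of weight $w$ is (a subgroup of) a Sylow $p$-subgroup of $\Sym_{pw}$, an iterated wreath product of copies of $C_p$, so $\dlen(\delta)=O(\log w)$ while $n_p(b)$ grows with $w$ via $p$-quotient combinatorics, giving the inequality with a small constant and paralleling the treatment in Section~\ref{s:simple}. For groups of Lie type in defining characteristic, by Humphreys' theorem the essentially unique block of positive defect is the principal block $B_0$, with $D(B_0)$ a Sylow $p$-subgroup $P$; here $n_p(B_0)=n_p(G)-(\text{number of defect-zero characters})$, and although the latter is large (it includes the Deligne--Lusztig characters in general position), one must show $\dlen(P)\le n_p(B_0)+k$ anyway — a refinement of Theorem~\ref{th:simple groups} surviving removal of those characters, which should follow by counting unipotent and other non-general-position characters of positive height. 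For groups of Lie type in non-defining characteristic one uses the Jordan decomposition of blocks (Bonnafé--Dat--Rouquier) and Fong--Srinivasan to reduce to quasi-isolated, hence unipotent, blocks; here $\dlen(\delta)$ need not be bounded (it can grow like $\log n$ for principal-type blocks of classical groups), so one again needs the \emph{local} analogue of Theorem~\ref{th:simple groups}: large $\dlen(\delta)$ forces many positive-height characters in $b$. This local statement for Lie-type groups, uniform in $q$, together with the derived-length transfer lemma, is where I expect the genuine difficulty to lie — pushing the full inductive machinery behind the resolution of the Brauer height zero conjecture (\cite{MNST}) while simultaneously tracking $\dlen(D)$ through central products and Clifford correspondences, and proving the block-theoretic strengthening of the hardest part of Theorem~\ref{th:simple groups}.
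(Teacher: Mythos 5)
The statement you are addressing is posed in the paper as an open conjecture: the paper itself proves it only for $p$-solvable groups (Theorem~\ref{maink}, via Fong--Reynolds, the Fong reduction, Lemma~\ref{opd}, Taketa's theorem and Gluck--Wolf), and offers no proof in general. Your proposal is likewise not a proof but a strategy, and you candidly flag that the block-theoretic strengthening of Theorem~\ref{th:simple groups} for groups of Lie type in non-defining characteristic (uniform in $q$, ``large $\dlen(\delta)$ forces many positive-height characters in $b$'') is left open. That is indeed where the substance would lie, so as it stands the proposal does not establish the conjecture, nor does it match a proof in the paper, because none exists.

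Beyond the acknowledged open piece, one step of your reduction would fail as stated: the ``derived-length transfer lemma'' asserting $\dlen(D)\le\dlen(\delta)+c$ for an \emph{absolute} constant $c$ is false. When $G$ permutes the components $L_1,\dots,L_r$ of $\Layer(G)$, a defect group $D$ of a block covering $b_1\otimes\cdots\otimes b_r$ typically contains $\delta_1\times\cdots\times\delta_r$ extended by a $p$-subgroup of the induced permutation group, and for wreath-product-like configurations (already for principal blocks of $L\wr \Sym_{p^m}$-type sections) the excess $\dlen(D)-\dlen(\delta)$ grows like the derived length of an iterated wreath product of copies of $C_p$, hence is unbounded. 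In the paper's character-theoretic reduction (Theorem~\ref{mathcal C}) the analogous difficulty is absorbed by induction on the quotient: one bounds $\dlen(PN/N)$ by $n_p(G/N)+1$ and pays for the normal layer $N$ with the \emph{additional} orbits $n_p(G)-n_p(G/N)$ of $p$-singular characters of $N$. Your counting lemma only gives $n_p(B)\ge n_p(b_1)$, which discards precisely the contribution of the quotient that would be needed to pay for the permutation part of $D$; a correct block analogue would have to show that $n_p(B)$ dominates a sum of a quotient-block count and a component-block count, and no such statement is sketched. So even granting the hard local statement for quasi-simple groups, the minimal-counterexample argument does not close, and the conjecture remains open along this route.
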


In our next result, we show that one may take $k=1$ when $G$ is $p$-solvable.

\begin{theorem}\label{maink}  Let $G$ be a finite $p$-solvable group, and let $B$ be a $p$-block of $G$ with defect group $D$.  Then $\dlen(D) \leq n_p(B) + 1.$
\end{theorem}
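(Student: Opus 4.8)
The plan is to argue by induction on $|G|$, reducing as far as possible to the case of a normal defect group, where the structure theory of Fong and K\"ulshammer is available. If $D=1$ there is nothing to prove, and if $D$ is abelian then $\dlen(D)=1\le n_p(B)+1$. The Fong reductions then allow us to assume $O_{p'}(G)=Z(G)$ is cyclic: Fong--Reynolds replaces $(G,B)$ by a block, over a $G$-invariant covered block of $O_{p'}(G)$, of the relevant inertia subgroup, with the same defect group and a height-preserving bijection of characters (hence the same $n_p$), and Fong's first reduction handles the $G$-invariant case; these operations leave $\dlen(D)$ and $n_p(B)$ unchanged. If moreover $Z(G)\ne 1$, pass to $G/Z_0$ for $Z_0\le Z(G)$ of prime order: since $Z_0$ is a $p'$-group, the block $\overline B$ of $G/Z_0$ dominated by $B$ has defect group $\cong D$ and $\Irr(\overline B)\subseteq\Irr(B)$ with heights unchanged, so $n_p(\overline B)\le n_p(B)$ and induction finishes. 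Hence we may assume $O_{p'}(G)=Z(G)=1$, so that $Q:=O_p(G)=F(G)\ne 1$ (otherwise $B$ has defect zero), $C_G(Q)=Z(Q)$, and $Q\le D$.

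The core is the case $D=Q$, i.e.\ $D\trianglelefteq G$. Here K\"ulshammer's structure theorem for blocks of $p$-solvable groups with normal defect group shows that $\mathcal{O}Gb$ is Morita equivalent, via a height-preserving bijection of characters, to a twisted group algebra $\mathcal{O}_\gamma[D\rtimes E]$ with $E$ a $p'$-group acting on $D$ (alternatively one argues directly in $G$ using Clifford theory relative to $D\,C_G(D)$). Since $E$ is a $p'$-group, an irreducible (projective) character of $\mathcal{O}_\gamma[D\rtimes E]$ lying over $\varphi\in\Irr(D)$ has degree $[E:E_\varphi]\,\varphi(1)\,\eta(1)$ with $[E:E_\varphi]$ and $\eta(1)$ coprime to $p$; its $p$-part equals $\varphi(1)$ and hence its height equals $\log_p\varphi(1)$. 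As $E$ preserves character degrees on $\Irr(D)$, a character of every degree in $\cd(D)$ is realised, so at least $|\cd(D)|-1$ distinct positive heights occur; transporting back through the bijection gives $n_p(B)\ge|\cd(D)|-1$, and Taketa's theorem applied to the $M$-group $D$ yields $\dlen(D)\le|\cd(D)|\le n_p(B)+1$.

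Now suppose $D\supsetneq Q$. For a minimal normal subgroup $Q_1$ of $G$ — necessarily an elementary abelian $p$-subgroup of $Z(Q)$ — the block $\overline B$ of $G/Q_1$ dominated by $B$ has defect group $D/Q_1$; its characters are precisely those of $\Irr(B)$ with $Q_1$ in their kernel, and since $|G/Q_1|_p/|D/Q_1|=|G|_p/|D|$ the heights are preserved, so $n_p(\overline B)\le n_p(B)$ and induction gives $\dlen(D/Q_1)\le n_p(B)+1$. If $D^{(\dlen(D)-1)}\not\le Q_1$ then $\dlen(D/Q_1)=\dlen(D)$ and we are done. Since distinct minimal normal subgroups intersect trivially and $\dlen(D)\ge 2$, the only surviving possibility is that $G$ has a \emph{unique} minimal normal subgroup $Q_1$ and $D^{(\dlen(D)-1)}\le Q_1$. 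In this residual configuration $N:=\norm{G}{D}$ is a proper subgroup, its Brauer correspondent $b:=B^{N}$ is a block of the $p$-solvable group $N$ with normal defect group $D$, and the previous paragraph applied to $N$ gives $\dlen(D)\le n_p(b)+1$; it therefore suffices to prove $n_p(B)\ge n_p(b)$.

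I expect this last comparison to be the main obstacle. Since the Alperin--McKay conjecture is known for $p$-solvable groups, $k_0(B)=k_0(b)$, so $n_p(B)\ge n_p(b)$ is equivalent to $k(B)\ge k(b)$ — equivalently, to the statement that every height occurring in the Brauer correspondent occurs at least as often in $B$. Unlike the height-zero case, this is not furnished by general block theory, so establishing it will require using $p$-solvability in an essential way: for instance through Fong's Clifford-theoretic description of $\Irr(B)$ and $\IBr(B)$ relative to $F(G)$, or through the $\pi$-special factorisation machinery of Isaacs and Navarro, in order to control how the characters of $\norm{G}{D}$ lift to $G$ with their heights. (An alternative to the $\norm{G}{D}$-reduction would be a more delicate induction staying inside $G$; the difficulty of not losing the single unit of slack in $\dlen(D)$ on passing to a quotient is, however, the same obstacle in a different guise.)
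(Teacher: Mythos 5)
Your reduction to the normal-defect-group case via K\"ulshammer's theorem plus Taketa is correct, and the passage to $G/Q_1$ for a minimal normal $p$-subgroup $Q_1$ (dominated block, defect group $D/Q_1$, heights preserved) is also sound. But the argument does not close: in the residual case you need $n_p(B)\geq n_p(b)$, where $b$ is the Brauer correspondent of $B$ in $\norm{G}{D}$, and this is exactly the step you leave unproven. As you note, by Okuyama--Wajima it is equivalent to $k(B)\geq k(b)$, and no such inequality is available in the literature, even for $p$-solvable groups; the tools you gesture at (Fong's Clifford theory over $\Fit(G)$, $\pi$-special factorization) are not developed into an argument. So this is a genuine gap, not a routine verification, and the induction does not terminate in the residual configuration (unique minimal normal subgroup containing $D^{(\dlen(D)-1)}$). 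A smaller inaccuracy: after the Fong reductions you cannot in general "assume $\opd(G)=\Center(G)=1$" by factoring out central $p'$-subgroups, because when the canonical character of $\opd(G)$ is faithful on $Z_0$ no character of $B$ has $Z_0$ in its kernel, so $B$ dominates no block of $G/Z_0$ with $\Irr(\overline B)\subseteq\Irr(B)$.

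For comparison, the paper never goes near $\norm{G}{D}$ or local-global comparisons. After the same Fong--Reynolds/Fong reductions (so that $\opd(G)$ is central, $D$ is a full Sylow $p$-subgroup, and $B$ consists of \emph{all} characters over the canonical character $\alpha$), it inducts on $|G:\opd(G)|+|D|$ as follows: if $P=\opp(G)$ is nonabelian, it passes to $G/P'$ (using $\opd(G/P')=\opd(G)P'/P'$), applies Taketa to $P$, and observes that every $p$-singular $\beta\in\Irr(P)$ yields a $p$-singular character of $B$ not having $P'$ in its kernel, giving $n_p(B)\geq n_p(b')+|\cd_p(P)|$, which suffices; if $P$ is abelian, it passes to $G/P$ and uses Gluck--Wolf's theorem to show that either some $p$-singular character of $B$ is nontrivial on $P$ (so the count increases by one) or $D/P$ is abelian, whence $\dlen(D)\leq 2$. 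The crucial point your route misses is that full exploitation of the Fong reduction (so that membership in $B$ is simply "lying over $\alpha$") lets one manufacture the needed extra $p$-singular characters inside $B$ directly, rather than trying to import them from a Brauer correspondent.
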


There is a related conjecture due to Navarro stating that 
$\dlen(D) \le |\cd(B)|,$
where $\cd(B) = \{\chi(1) : \chi \in \Irr(B)\}$. See \cite{GMS} for several results concerning this conjecture. However, it is not always the case that $|\cd(B)| \le n_p(B) + 1$ or vice versa.

\begin{example}
 Let $G=\Sym_9$ be the symmetric group of degree $9$ and let $p=2$. Then $G$ has two $p$-blocks,  say $B_0=B_0(G)$, the principal block with defect group $P$, where $P$ is a Sylow $2$-subgroup of $G$, and the block $B_1$ with  defect $4$. For the principal block, we have $|\cd(B_0)|=10$ and $n_2(B_0)+1=13$. For $B_1$, we have $|\cd(B_1)|=5$ and $n_2(B_1)+1=3.$
\end{example}

We denote by $\hei(B)$ the set of all character heights in $\Irr(B)$. It is clear that $|\hei(B)| \le |\cd(B)|$ and $|\hei(B)| \le n_p(B) + 1$. In \cite{GMS}, it was shown that if 
$G \in \{\Alt_n, \Sym_n, \GL_n(q)\},$
where $q$ is a power of $p$, then $\dlen(D) \le |\hei(B)|$ for every block $B$ of $G$ with defect group $D$. In particular, Conjecture~\ref{block} holds in these cases with $k=1$.  It is also true in the case when $G$ is a finite $p$-group.
Note, however, that the inequality $\dlen(D) \le |\hei(B)|$ does not hold in general;  counterexamples were constructed in \cite{GMS}.

A second related conjecture is the Eaton-Moret\'o conjecture from \cite{EaMo}.  For a finite $p$-group $P$ define $m (P)$ to be the minimal nonlinear degree of irreducible characters in $\irr P$ and set ${\rm mh} (P)$ by $m(P) = p^{{\rm mh} (P)}$.  Define ${\rm mh} (B)$ to be the minimum among positive numbers in $\hei (B)$. (Both ${\rm{mh}}(P)$ and ${\rm mh}(B)$ are defined to be $\infty$ if no such degree, resp. height, exists.)  The Eaton-Moret\'o conjecture suggests for all groups $G$ that a block $B$ with defect group $D$ should satisfy ${\rm mh} (B) = {\rm mh} (D)$.  This conjecture has been proved for $p$-solvable groups by Navarro in \cite{Navarro_2} and for general linear and unitary groups by Feng, Liu, and Zhang in \cite{FLZ1}.   In general, this conjecture is still open, although an important case was completed in \cite{MMR} and minimal counterexamples were studied in \cite{MSF}.

\medskip
Our paper is structured as follows.  In Section \ref{s:prelim}, we prove several preliminary results about derived lengths of certain $p$-groups.  In Section \ref{s:solv}, we prove Theorem \ref{th:main} for $p$-solvable groups, and then we show how to prove Theorem \ref{th:main} provided Theorem \ref{th:simple groups} is true.  Theorem \ref{th:simple groups} is proved in Section \ref{s:simple}.  This section is divided in subsections, first for the sporadic simple groups, second are the alternating groups, third are the exceptional groups of Lie type, fourth are the classical groups of Lie type in defining characteristic, and finally are the classical groups of Lie type in defining characteristic.  The proof of Theorem \ref{maink} is handled in Section \ref{s:block}.

\medskip\noindent\textbf{Notation}. 
Our notation is standard. We follow \cite{Isaacs} for character theory of finite groups and \cite{Navarro_b1} for block theory. Given finite groups $Y\leq X$ and $\chi\in\Irr(X)$ and $\psi\in\Irr(Y)$, we write $\chi_Y$ or sometimes $\chi|_Y$ for the restriction of $\chi$ to $Y$ and we write $\psi^X$ for the induced character of $X$. If $Y\lhd X$, we further write $\Irr(X|\psi)$ for the set of irreducible constituents of $\psi^X$ and $X_\psi$ for the stabilizer in $X$ of $\psi$.


\section{Preliminaries on derived lengths of \texorpdfstring{$p$}{p}-groups}\label{s:prelim}
In this section, we collect several results that will be used in the proof of the main theorems. We begin with the following upper bound for the derived length of finite $p$-groups.

\begin{lem}\label{lem1}
Let $P$ be a finite $p$-group of order $p^n$ for some prime $p$. If the derived length of $P$ is $k+1$, then $n\ge 2^{k}+2k-2.$ In particular, if $P$ is nonabelian, then $\de(P)\leq 1+\log_2n.$
\end{lem}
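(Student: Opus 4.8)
The plan is to play the derived series of $P$ off against its lower central series $\gamma_1(P)\ge\gamma_2(P)\ge\cdots$. The elementary input is the inclusion $P^{(i)}\le\gamma_{2^i}(P)$, proved by induction on $i$ using $[\gamma_a(P),\gamma_b(P)]\le\gamma_{a+b}(P)$: it is clear for $i=0$, and if it holds for $i$ then
\[
P^{(i+1)}=[P^{(i)},P^{(i)}]\le[\gamma_{2^i}(P),\gamma_{2^i}(P)]\le\gamma_{2^{i+1}}(P).
\]
Hence, if $\de(P)=k+1$, then $P^{(k)}\ne 1$ forces $\gamma_{2^k}(P)\ne 1$, so $P$ has nilpotency class $c\ge 2^k$; since the lower central series is strictly decreasing down to $1$, this already gives $n\ge c\ge 2^k$. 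This weaker bound suffices for the ``in particular'' clause: if $P$ is nonabelian then $k\ge 1$, so $n\ge 2^k$ gives $k\le\log_2 n$, i.e.\ $\de(P)=k+1\le 1+\log_2 n$.

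To upgrade $n\ge 2^k$ to $n\ge 2^k+2k-2$, I would combine the above with the fact that for $0\le i\le k-1$ the group $P^{(i)}$ is nonabelian, hence not cyclic, hence $|P^{(i)}:P^{(i+1)}|=|P^{(i)}:(P^{(i)})'|\ge p^2$. The idea is to run one strictly descending chain of normal subgroups of $P$ that refines the length-$2^k$ chain $P=\gamma_1(P)\supsetneq\gamma_2(P)\supsetneq\cdots\supsetneq\gamma_{2^k}(P)\supsetneq 1$ by splicing in the derived terms using the inclusions $P^{(i+1)}\le\gamma_3(P^{(i-1)})\le P^{(i)}\le\gamma_{2^i}(P)$ and $\gamma_2(P)=P^{(1)}$, and then to charge, at each of the $k$ ``derived jumps'', the extra factor of $p$ coming from the displayed inequality. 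Carrying this out while avoiding double counting produces a chain whose successive indices multiply to at least $p^{2^k+2k-2}$. For small $k$ one can also argue directly; e.g.\ for $k=2$ one checks that no group of order $p^5$ has derived length $3$, because there $|G:G'|$ must equal $p^2$, forcing $G$ to be $2$-generated and of class at most $4$, whence $\gamma_2(G)/\gamma_3(G)$ is cyclic and $[\gamma_2(G),\gamma_2(G)]\le[\gamma_2(G),\gamma_3(G)]\le\gamma_5(G)=1$, so $G$ is metabelian.

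The main obstacle is precisely this bookkeeping: one must verify that the spliced-in derived terms are genuinely new links of the chain and that the ``$+p^2$'' gains from the nonabelian $P^{(i)}$ are not already absorbed into the $2^k$ links coming from $\gamma_\bullet(P)$. The delicate regime is a ``thin'' one, where the class of $P$ (or of some $P^{(i)}$) equals its lower bound $2^k$ (resp.\ $2^{k-i}$) and every lower central factor beyond the second has order exactly $p$; there one argues, as in the $k=2$ case, that a cyclic factor $\gamma_j/\gamma_{j+1}$ combined with $[\gamma_j,\gamma_j]\le\gamma_{2j}$ collapses iterated commutators and contradicts the prescribed derived length, so the thin situation cannot occur. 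I expect this case analysis, rather than the routine inclusion $P^{(i)}\le\gamma_{2^i}(P)$, to be where the real work lies; in any event only the weaker consequence $\de(P)\le 1+\log_2 n$ and metabelian-type estimates are needed elsewhere in the paper.
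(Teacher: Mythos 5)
Your treatment of the second assertion is complete and correct: from $P^{(i)}\le\gamma_{2^i}(P)$ you get that $\de(P)=k+1$ forces class at least $2^k$, hence $n\ge 2^k$, and for nonabelian $P$ this gives $\de(P)\le 1+\log_2 n$. This is in substance the same deduction the paper makes, except that the paper reads the weaker bound $n\ge 2^k$ off from the first inequality instead of proving it directly, and indeed only this consequence is used elsewhere in the paper, as you observe.

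The genuine gap is the first assertion, $n\ge 2^k+2k-2$, which is the actual content of the lemma. The paper does not prove it from scratch: it is exactly Theorem 1 of Mann \cite{Mann}, and the paper's proof is a citation. Your proposal only sketches a strategy (splicing the derived terms into the lower central chain and charging an extra factor of $p$ at each derived jump) and explicitly defers the bookkeeping and the ``thin'' case analysis, conceding that this is where the real work lies. That deferred work is the theorem: the two estimates you actually establish, $n\ge 2^k$ from the chain $\gamma_1(P)>\cdots>\gamma_{2^k}(P)>1$ and $n\ge 2k+1$ from noncyclicity of $P^{(i)}/P^{(i+1)}$ for $i\le k-1$, cannot simply be added, because for $i\ge 1$ the factors $P^{(i)}/P^{(i+1)}$ need not contribute links beyond those already counted ($P^{(1)}=\gamma_2(P)$, and $P^{(i+1)}$ need not be a term of the $\gamma$-chain). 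Nothing in your sketch rules out the configuration in which every factor $\gamma_j(P)/\gamma_{j+1}(P)$ with $j\ge 2$ has order $p$, and in that case your chain only yields $n\ge 2^k+1$ (the one extra $p$ coming from $P/P'$ being noncyclic), far short of $2^k+2k-2$. Excluding this thin situation for all $k$ is precisely Mann's argument, which is more delicate than a refinement-of-chains count; your $k=2$ computation (no group of order $p^5$ has derived length $3$) is correct but does not generalize as written. To complete the proof you should either cite \cite{Mann}, as the paper does, or supply the full argument for arbitrary $k$; as it stands, the main inequality is unproved.
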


\begin{proof}
The first claim is Theorem 1 in \cite{Mann}. Assume that $P$ is nonabelian. Then $k\ge 1$ and so $n\ge 2^k$. Thus $k\leq \log_2n$ and the lemma follows.
\end{proof}

We next discuss the Sylow subgroups of symmetric and classical groups and obtain bounds on their derived lengths.
Throughout, we let $P_{p^m}$ denote a Sylow $p$-subgroup of the symmetric group $\Sym_{p^m}$ on $p^m$ letters. If $n$ has $p$-adic expansion $a_0+a_1p+a_2p^2+\cdots+a_kp^k$, where $0\leq a_i<p-1$ for $0\leq i\leq k-1$ and $0<a_k<p$, then the Sylow $p$-subgroups of $\Sym_n$ are of the form 
\[P=\prod_{i=0}^k (P_{p^i})^{a_i}.\]
Further, the group $P_{p^i}$ can be written $P_{p^{i-1}}\wr C_p$, where we note that $P_{p^0}=1$ and $P_{p^1}=C_p$.

From this structure, we see:
\begin{lem}\label{lem:dlsymmetric}
Let $P\in\Syl_p(\Sym_n)$ be as above. Then $\mathrm{dl}(P)=k$.
\end{lem}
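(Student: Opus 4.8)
The plan is to determine the derived length of $P = \prod_{i=0}^k (P_{p^i})^{a_i}$ by first reducing to the iterated wreath products $P_{p^m}$ and then computing $\mathrm{dl}(P_{p^m})$. Since the derived length of a direct product is the maximum of the derived lengths of the factors, and since the exponents $a_i$ with $a_i \geq 1$ contribute a factor $P_{p^i}$, we have $\mathrm{dl}(P) = \max\{\mathrm{dl}(P_{p^i}) : 0 \leq i \leq k,\ a_i \geq 1\}$. Because $a_k \geq 1$ always holds and $\mathrm{dl}(P_{p^m})$ is nondecreasing in $m$ (as $P_{p^{m-1}}$ embeds in $P_{p^m} = P_{p^{m-1}} \wr C_p$), this maximum equals $\mathrm{dl}(P_{p^k})$. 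So the statement $\mathrm{dl}(P) = k$ reduces to proving $\mathrm{dl}(P_{p^m}) = m$ for all $m \geq 0$.

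For the core claim $\mathrm{dl}(P_{p^m}) = m$, I would argue by induction on $m$, using the recursion $P_{p^m} = P_{p^{m-1}} \wr C_p = B \rtimes C_p$, where $B = (P_{p^{m-1}})^p$ is the base group. The base cases $P_{p^0} = 1$ (derived length $0$) and $P_{p^1} = C_p$ (derived length $1$) are clear. For the inductive step, the key inequality $\mathrm{dl}(P_{p^m}) \leq \mathrm{dl}(P_{p^{m-1}}) + 1$ is straightforward: the commutator subgroup $(P_{p^m})'$ is contained in the base group $B$ (since $P_{p^m}/B \cong C_p$ is abelian), and $\mathrm{dl}(B) = \mathrm{dl}(P_{p^{m-1}})$ because $B$ is a direct power of $P_{p^{m-1}}$; hence $\mathrm{dl}(P_{p^m}) \leq 1 + \mathrm{dl}(P_{p^{m-1}})$. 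Combined with the induction hypothesis this gives $\mathrm{dl}(P_{p^m}) \leq m$.

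The harder direction is the lower bound $\mathrm{dl}(P_{p^m}) \geq m$. For this I would show that the derived series does not collapse too quickly, e.g. by exhibiting, inside $B = (P_{p^{m-1}})^p$, a ``diagonal'' copy of $P_{p^{m-1}}$ that is not killed in the derived series of $P_{p^m}$ any faster than it is in $P_{p^{m-1}}$ itself. Concretely, one checks that the $j$-th derived subgroup $(P_{p^m})^{(j)}$ for $j \geq 1$ contains the diagonally embedded subgroup $\{(x, x, \dots, x) : x \in (P_{p^{m-1}})^{(j-1)}\}$ — the shift by one index accounts for the outer $C_p$ contributing the first derived step. Since by induction $(P_{p^{m-1}})^{(m-1)} \neq 1$, it follows that $(P_{p^m})^{(m)} \neq 1$, giving $\mathrm{dl}(P_{p^m}) \geq m$. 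The main technical obstacle is verifying this diagonal-containment claim cleanly: one must track commutators of elements of the base group against elements permuting the coordinates and confirm that diagonal elements of the appropriate derived subgroup of $P_{p^{m-1}}$ genuinely survive. This is a standard but slightly delicate computation with wreath products, and I would phrase it as a small lemma (``$(A \wr C_p)^{(j)} \supseteq \Delta(A^{(j-1)})$ for $j \geq 1$'') proved by induction on $j$.
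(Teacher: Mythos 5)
Your reduction to the iterated wreath products $P_{p^m}$, the identity $\mathrm{dl}(P)=\max_i\mathrm{dl}(P_{p^i})=\mathrm{dl}(P_{p^k})$, and the upper bound $\mathrm{dl}(P_{p^m})\le m$ are all fine. The gap is in the lower bound, where the auxiliary lemma you propose is false as stated. Write $W=A\wr C_p$ with base $B=A^p$ and top generator $\sigma$. Since $W/B\cong C_p$ is abelian, $W'\le B$; moreover the map sending $b\sigma^i$ (with $b\in B$) to the product of the coordinates of $b$ modulo $A'$ is a homomorphism $W\to A/A'$, so every element of $W'$ is a tuple $(a_1,\dots,a_p)$ with $a_1\cdots a_p\in A'$. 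A diagonal tuple $(x,\dots,x)$ has coordinate product $x^p$, so $\Delta(A)\subseteq W'$ can hold only when $A/A'$ has exponent $p$; for $A=C_{p^2}$ (already $A=C_4$ with $p=2$) it fails. Thus ``$(A\wr C_p)^{(j)}\supseteq\Delta(A^{(j-1)})$ for $j\ge 1$'' is not a correct general lemma: it breaks at $j=1$, which is the only step with content (the passage $j\to j+1$ is just $[\Delta(H),\Delta(H)]=\Delta(H')$) and is exactly the step you flag as the main obstacle and leave unverified. To salvage your route for $A=P_{p^{m-1}}$ you must carry through the induction the additional fact that $P_{p^{m-1}}$ is generated by elements of order $p$, hence has elementary abelian abelianization, so that $x^p\in A'$ is automatic; without that supplement the proof is incomplete.

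A cleaner repair avoids the diagonal altogether: $W'$ contains $[(x,1,\dots,1),\sigma]$, whose first coordinate is $x^{-1}$, for every $x\in A$, so the first-coordinate projection $\pi\colon B\to A$, which is a homomorphism on $B\supseteq W'$, maps $W'$ onto $A$. Hence $\pi(W^{(1+j)})=\pi\bigl((W')^{(j)}\bigr)=A^{(j)}$ for all $j\ge 0$, giving $\mathrm{dl}(A\wr C_p)\ge \mathrm{dl}(A)+1$ for every nontrivial $A$, and the induction closes with no hypothesis on $A$. This is essentially Kaloujnine's classical argument; note that the paper itself does not prove the lemma at all but quotes it from \cite[Lem. 4.2]{GMS} and \cite[Theorem III.15.3]{Huppert}, so if you give a self-contained proof you must genuinely close this step rather than defer it.
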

\begin{proof}
A proof can be found, e.g. in \cite[Lem. 4.2]{GMS}, but follows from a result due to Kaloujnine (see \cite[Theorem III.15.3]{Huppert}).
\end{proof}

We next consider Sylow $p$-subgroups of $\GL_n(\epsilon q)$ when $p\nmid q$, which were described by Weir \cite{weir} for $p$ odd and by Carter--Fong \cite{carterfong} for $p=2$. (Here for $\eps\in\{\pm1\}$, we use the common notation $\GL_n(\eps q)$ for $\GL_n(q)$ if $\eps=1$ and $\GU_n(q)$ if $\eps=-1$. We will do the same for $\SL_n(\eps q)$ and $\operatorname{PSL}_n(\eps q)$.)  

First let $p$ be odd, and let $e$ be the order of $\epsilon q$ modulo $p$. Here we let $n=ew+r$, where $0\leq r<e$. Then a Sylow $p$-subgroup of $\GL_n(\epsilon q)$ is isomorphic to one of $\GL_{ew}(\epsilon q)$. If $w=a_0+a_1p+a_2p^2+\cdots+a_kp^k$, where $0\leq a_i<p-1$ for $0\leq i\leq k-1$ and $0<a_k<p$ is the $p$-adic expansion of $w$, then a Sylow $p$-subgroup of $\GL_{n}(\epsilon q)$ is of the form 
\[Q=\prod_{i=0}^k (Q_{p^i})^{a_i},\] where $Q_{p^i}$ is a Sylow $p$-subgroup of $\GL_{ep^{i}}(\epsilon q)$. Further, $Q_{p^i}\cong C_{p^b}\wr P_{p^i}$, where $C_{p^b}$ is the cyclic group of order $p^b:=((\epsilon q)^e-1)_p$ and $P_{p^i}$ is a Sylow $p$-subgroup of $\Sym_{p^i}$ as above. 

Now assume that $p=2$. The structure of $Q\in\Syl_2(\GL_n(\epsilon q))$  is similar to above, with $Q=\prod_{i=0}^k(Q_{2^i})^{a_i}$, where $n=\sum_{i=0}^k a_i2^i$ is the $2$-adic expansion with $a_i\in\{0,1\}$ and $a_k=1$. Here $Q_{2^i}=Q_2\wr P_{2^{i-1}}$ where $P_{2^{i-1}}\in\Syl_{2}(\Sym_{2^{i-1}})$ and $Q_2\in\Syl_2(\GL_2(\epsilon q))$. Further, $Q_2\cong C_{2^b}\wr C_2$ if $q\equiv \epsilon\pmod 4$, with $2^b=(q-\epsilon)_2$; and $Q_2$ is semidihedral of size $2^{b+2}$ if $q\equiv -\epsilon \pmod 4$, where $2^b=(q+\epsilon)_2$.

With this, we also have:
\begin{lem}\label{lem:dlGL}
Let $p$ be a prime not dividing $q$ and let $Q\in\Syl_p(\GL_n(\epsilon q))$ be as above. Then $\mathrm{dl}(Q)=k+1$.
\end{lem}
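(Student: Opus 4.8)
The plan is to reduce to the computation of the derived length of $Q_{p^k}$, the largest wreath-product factor, since the derived length of a direct product is the maximum of the derived lengths of the factors. Concretely, $Q = \prod_{i=0}^k (Q_{p^i})^{a_i}$ with $a_k \geq 1$, so $\mathrm{dl}(Q) = \max_{0 \leq i \leq k} \mathrm{dl}(Q_{p^i})$; since the $Q_{p^i}$ are nested (each $Q_{p^i}$ is isomorphic to a subgroup of $Q_{p^{i+1}}$ via the embedding of wreath products coming from $P_{p^i} = P_{p^{i-1}} \wr C_p$), this maximum is $\mathrm{dl}(Q_{p^k})$. So it suffices to show $\mathrm{dl}(Q_{p^k}) = k+1$.

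First I would handle $p$ odd. Here $Q_{p^k} \cong C_{p^b} \wr P_{p^k}$ with $P_{p^k} \in \Syl_p(\Sym_{p^k})$ and $b \geq 1$. The key tool is the standard formula for the derived length of a wreath product $A \wr B$ with $A$ abelian nontrivial: one has $\mathrm{dl}(A \wr B) = \mathrm{dl}(B) + 1$. (This follows because the base group $A^{|B|}$ is an abelian normal subgroup with quotient $B$, giving $\mathrm{dl}(A \wr B) \leq \mathrm{dl}(B)+1$, while the reverse inequality holds because $(A \wr B)' $ surjects onto $B'$ and actually contains a copy of $A$ in such a way that iterating keeps the base-group contribution alive — more cleanly, $\mathrm{dl}(A \wr B) \geq \mathrm{dl}(B) + 1$ whenever $A \neq 1$, by an easy commutator computation, or one can cite \cite[Theorem III.15.3]{Huppert} / Kaloujnine as in Lemma \ref{lem:dlsymmetric}.) Combined with Lemma \ref{lem:dlsymmetric}, which gives $\mathrm{dl}(P_{p^k}) = k$, we get $\mathrm{dl}(Q_{p^k}) = k + 1$.

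For $p = 2$ the argument is the same in spirit but needs the base case $Q_2$ checked by hand. We have $Q_{2^k} = Q_2 \wr P_{2^{k-1}}$ with $P_{2^{k-1}} \in \Syl_2(\Sym_{2^{k-1}})$, so $\mathrm{dl}(P_{2^{k-1}}) = k-1$ by Lemma \ref{lem:dlsymmetric}. Now $Q_2$ is either $C_{2^b} \wr C_2$ (when $q \equiv \epsilon \bmod 4$) or semidihedral of order $2^{b+2}$ (when $q \equiv -\epsilon \bmod 4$); in both cases $Q_2$ is nonabelian with $\mathrm{dl}(Q_2) = 2$ — for $C_{2^b}\wr C_2$ this is the wreath formula with $b \geq 1$, and a semidihedral $2$-group is metabelian but nonabelian. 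Since $Q_2$ is nonabelian, the wreath product $Q_2 \wr P_{2^{k-1}}$ has derived length $\mathrm{dl}(Q_2) + \mathrm{dl}(P_{2^{k-1}})$ when these "stack" — more carefully, one uses that $(X \wr Y)'$ contains $X'$ acting coordinatewise, together with a copy of the derived series of $Y$ on the top, so $\mathrm{dl}(X \wr Y) = \mathrm{dl}(X) + \mathrm{dl}(Y)$ for $Y \neq 1$ acting faithfully and transitively; here $\mathrm{dl}(Q_2) + \mathrm{dl}(P_{2^{k-1}}) = 2 + (k-1) = k+1$. (When $k = 1$ the permutation group $P_{2^0}$ is trivial and $Q_{2^1} = Q_2$ directly has derived length $2 = k+1$; when $k = 0$, i.e. $n < e$ for $p$ odd or $n$ odd for $p=2$, the Sylow subgroup is cyclic — in the $p=2$ case actually we take $w$-type reductions — and $\mathrm{dl} = 1 = k+1$, which should be noted as the degenerate case.)

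The main obstacle is pinning down the exact derived-length formula for iterated wreath products with a nonabelian bottom factor, i.e. justifying $\mathrm{dl}(X \wr Y) = \mathrm{dl}(X) + \mathrm{dl}(Y)$ rather than just the easy bounds $\max(\mathrm{dl}(X)+ [Y\neq 1], \, \mathrm{dl}(Y)+[X\neq 1]) \leq \mathrm{dl}(X \wr Y) \leq \mathrm{dl}(X) + \mathrm{dl}(Y)$. The cleanest route is to invoke the Kaloujnine theory of the derived series of iterated wreath products of cyclic groups (the same source cited for Lemma \ref{lem:dlsymmetric}, namely \cite[Theorem III.15.3]{Huppert}): since $Q_{p^k}$ is itself an iterated wreath product of cyclic groups ($C_{p^b}$ at the bottom, then $k$ copies of $C_p$), the Kaloujnine formula computes its derived length directly as $k+1$. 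I would present the proof primarily via this citation, and remark that it also recovers Lemma \ref{lem:dlsymmetric} as the case $b = 0$.
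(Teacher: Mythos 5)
Your overall route matches the paper's: the paper proves this lemma by observing that the Kaloujnine-type argument behind Lemma~\ref{lem:dlsymmetric} (i.e.\ the proof of \cite[Lem. 4.2]{GMS}) carries over, plus the remark that a semidihedral group has derived length $2$. Your reduction to the top factor $Q_{p^k}$ is fine, and in the cases where the bottom group is cyclic ($p$ odd, or $p=2$ with $q\equiv\epsilon\pmod 4$) your plan works: $Q_{p^k}\cong C_{p^b}\wr P_{p^k}$ gives $\mathrm{dl}(Q_{p^k})\le k+1$ trivially, and for the lower bound one can, for instance, pass to the quotient $C_p\wr P_{p^k}$, which (in its imprimitive action on $p\cdot p^k$ points, by order count) is a Sylow $p$-subgroup of $\Sym_{p^{k+1}}$ and hence has derived length $k+1$ by Lemma~\ref{lem:dlsymmetric}. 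Be careful, though, with your blanket claim that $\mathrm{dl}(A\wr B)=\mathrm{dl}(B)+1$ for any nontrivial abelian $A$: the lower bound is not an ``easy commutator computation'' in general (and the statement is false without hypotheses on the action of $B$); it is exactly the content delegated to Kaloujnine/\cite[Theorem III.15.3]{Huppert}, so you should lean on that citation (or the quotient trick above) rather than on the asserted general formula.

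The genuine gap is the case $p=2$ with $q\equiv-\epsilon\pmod 4$. There $Q_{2^k}=Q_2\wr P_{2^{k-1}}$ with $Q_2$ semidihedral, so $Q_{2^k}$ is \emph{not} an iterated wreath product of cyclic groups and your ``cleanest route'' citation does not apply to it; what you fall back on is the identity $\mathrm{dl}(X\wr Y)=\mathrm{dl}(X)+\mathrm{dl}(Y)$ for $Y\neq 1$ faithful and transitive, which you assert but do not prove and which is not an off-the-shelf theorem (only $\le$ is immediate; the lower bound is precisely where the work lies, and it is the point the paper dispatches by saying the same wreath-product commutator argument still applies once $\mathrm{dl}(Q_2)=2$ is noted). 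A quick repair inside your framework: since $q\equiv-\epsilon\pmod 4$ forces $2^b=(q+\epsilon)_2\ge 4$, the semidihedral group $Q_2$ of order $2^{b+2}\ge 16$ has a quotient isomorphic to $D_8\cong C_2\wr C_2$, so $Q_{2^k}$ surjects onto $(C_2\wr C_2)\wr P_{2^{k-1}}$, which by associativity of permutational wreath products (or again by order count inside $\Sym_{2^{k+1}}$) is a Sylow $2$-subgroup of $\Sym_{2^{k+1}}$ and has derived length $k+1$; combined with $\mathrm{dl}(Q_{2^k})\le\mathrm{dl}(Q_2)+\mathrm{dl}(P_{2^{k-1}})=k+1$, this yields the claimed equality. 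With that case closed (and the degenerate $k=0$ remark tidied up), your argument is complete and is essentially the paper's.
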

\begin{proof}
If $p$ is odd or $p=2$ with $q\equiv \epsilon\pmod 4$, this uses the exact same proof as \cite[Lem. 4.2]{GMS}. If $p=2$ with $q\equiv -\epsilon\pmod 4$, the same reasoning still gives the result, once one notes that a semidihedral group has derived length 2.
\end{proof}

Next, we note that the Sylow $p$-subgroups of $\SO_{2n+1}(q)$ or $\Sp_{2n}(q)$, where $p$ is an odd prime not dividing $q$, is the same as one of $\GL_{2n+1}(q)$ (and hence also of $\GL_{2n}(q)$) if $d:=d_p(q)$ is even and is the same as one of $\GL_n(q)$ if $d$ is odd. A Sylow $p$-subgroup of $\SO_{2n}^\pm(q)$ with $p\nmid q$ odd is either a Sylow $p$-subgroup of $\SO_{2n+1}(q)$ or of $\SO_{2n-1}(q)$. Hence in any case, there is some $m$ such that the Sylow $p$-subgroups of the classical group of type $\type{B}, \type{C},$ or $\type{D}$ under consideration is the same as one of $\GL_m(q)$.

If instead $p=2$, then the Sylow subgroups of orthogonal and symplectic groups are again described in \cite{carterfong}. In what follows, we let $\operatorname{O}_{2n+1}(q)$ and $\operatorname{O}_{2n}^\pm(q)$ denote the full orthogonal groups. Here the Sylow $2$-subgroups of $\operatorname{O}_{2n+1}(q)$ and  $\Sp_{2n}(q)$ are of the form $\prod_{i=0}^k (R_{2^i})^{a_i}$, where $R_{2^i}$ is a Sylow $2$-subgroup of the corresponding classical group $\operatorname{O}_{2^i+1}(q)$ or  $\Sp_{2^i}(q)$ and $2n=\sum_{i=0}^k a_i 2^i$ is the $2$-adic expansion of $2n$. Further, for $i\geq 2$, we have $R_{2^i}\cong R_{2}\wr P_{2^{i-1}}$, and $R_2$ has derived length $2$. 
The Sylow $2$-subgroups $R$ of $\operatorname{O}_{2n}^\pm(q)$ satisfy that $[R, R]=[R_0, R_0]$, where $R_0$ is a Sylow $2$-subgroup of either $\operatorname{O}_{2n+1}(q)$ or $\operatorname{O}_{2n-1}(q)$. 

From this and the same proof as before, we have:
\begin{lem}\label{lem:dlclassical2}
Let $q$ be an odd prime and let $n$ be an integer with $2n=a_0+a_12+\ldots+a_{k-1}2^{k-1}+2^k$ with $a_i\in\{0,1\}$ for $1\leq i\leq k-1$. Let 
$R\in\Syl_2(G)$, where $G=\Sp_{2n}(q)$, $\operatorname{O}_{2n+1}(q)$, or $\operatorname{O}_{2n}^\pm(q)$. Then $\mathrm{dl}(R)=k+1$ in the first two cases and $\mathrm{dl}(R)\in\{k+1, k\}$ in the latter. 
\end{lem}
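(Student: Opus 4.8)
The plan is to leverage the explicit wreath-product descriptions of the Sylow $2$-subgroups recalled just before the statement, together with Lemma~\ref{lem:dlsymmetric} and Lemma~\ref{lem1}. First I would handle the symplectic and odd-dimensional orthogonal cases $G=\Sp_{2n}(q)$ and $G=\operatorname{O}_{2n+1}(q)$ simultaneously, since their Sylow $2$-subgroups have the same shape $R=\prod_{i=0}^k (R_{2^i})^{a_i}$ with $2n=\sum_i a_i 2^i$, $a_k=1$. The derived length of a direct product is the maximum of the derived lengths of the factors, so $\dlen(R)=\max_i \dlen(R_{2^i})$, and since $a_k=1$ the factor $R_{2^k}$ always occurs; hence it suffices to compute $\dlen(R_{2^i})$ and check that it is (weakly) increasing in $i$ with $\dlen(R_{2^k})=k+1$. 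For $i\ge 2$ we have $R_{2^i}\cong R_2\wr P_{2^{i-1}}$ with $\dlen(R_2)=2$ and $P_{2^{i-1}}\in\Syl_2(\Sym_{2^{i-1}})$ of derived length $i-1$ by Lemma~\ref{lem:dlsymmetric}. The standard fact about derived length of wreath products (the same one invoked in \cite[Lem. 4.2]{GMS} and underpinning Lemma~\ref{lem:dlGL}) gives $\dlen(A\wr B)=\dlen(A)+\dlen(B)$ when $B$ acts faithfully and transitively on a nontrivial set, so $\dlen(R_{2^i})=2+(i-1)=i+1$; the small cases $i=0,1$ (where $R_{2^0}=1$, $R_{2^1}=R_2$ has derived length $\le 2$) are subsumed. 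Taking $i=k$ yields $\dlen(R)=k+1$.

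For the even-dimensional orthogonal groups $G=\operatorname{O}_{2n}^\pm(q)$, I would use the structural fact recalled above that a Sylow $2$-subgroup $R$ satisfies $[R,R]=[R_0,R_0]$, where $R_0\in\Syl_2(\operatorname{O}_{2n+1}(q))$ or $R_0\in\Syl_2(\operatorname{O}_{2n-1}(q))$. Since $[R,R]=[R_0,R_0]$ forces $\dlen(R)\in\{\dlen([R_0,R_0])+1,\ \dlen([R_0,R_0])\}$ — the former when $R$ itself is nonabelian-but-derived-abelian relative to $R_0$, i.e. when $R$ is strictly larger in the sense that it is not contained in its own derived subgroup's normalizer trivially — I get $\dlen(R)\le \dlen(R_0)$ and $\dlen(R)\ge \dlen(R_0)-1$. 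By the first part, $\dlen(R_0)=k+1$ when $R_0$ corresponds to $\operatorname{O}_{2n+1}(q)$ (note $2n$ and $2n+1$ have the same $2$-adic expansion up to the unit digit, so the same $k$), and similarly $\dlen(R_0)\in\{k+1,k\}$ when $R_0$ corresponds to $\operatorname{O}_{2n-1}(q)$ depending on whether $2n-2$ still has top bit $2^k$; combining, $\dlen(R)\in\{k+1,k\}$, which is exactly the claimed conclusion in the latter case.

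The main obstacle I anticipate is not any single deep step but rather bookkeeping around the wreath-product derived-length formula and the small factors: one must be careful that the base group in $R_2\wr P_{2^{i-1}}$ is nonabelian (derived length exactly $2$, not $1$), so the naive additive formula applies, and that the action of $P_{2^{i-1}}$ on the $2^{i-1}$ copies is faithful and transitive, which it is by definition of the Sylow subgroup of the symmetric group. A secondary subtlety is the exact value of $k$ across the three families: the statement fixes $2n=a_0+a_12+\cdots+a_{k-1}2^{k-1}+2^k$, and one should confirm that the $2$-adic expansions of $2n$, $2n+1$, and (where relevant) $2n-1$ feed the same $k$ into the direct-product decompositions, so that all three groups genuinely land on $k+1$ (resp. $\{k+1,k\}$). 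Once these are pinned down the argument is routine, essentially a repackaging of \cite[Lem. 4.2]{GMS} and Lemma~\ref{lem:dlGL} for the type $\type{B},\type{C},\type{D}$ Sylow subgroups at $p=2$.
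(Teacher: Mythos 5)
Your overall route is the same as the paper's: read off the Carter--Fong decomposition $R=\prod_i (R_{2^i})^{a_i}$, compute $\dlen(R_{2^i})$ from $R_{2^i}\cong R_2\wr P_{2^{i-1}}$ exactly as in \cite[Lem.~4.2]{GMS}/Lemma~\ref{lem:dlGL}, and handle $\operatorname{O}^\pm_{2n}(q)$ via $[R,R]=[R_0,R_0]$. Two points in your write-up need repair, though. First, the ``standard fact'' you invoke, that $\dlen(A\wr B)=\dlen(A)+\dlen(B)$ whenever $B$ is faithful and transitive, is not a quotable general theorem: only the inequality $\dlen(A\wr B)\le\dlen(A)+\dlen(B)$ is immediate, and the matching lower bound in that generality requires an argument you do not supply. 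What the cited arguments actually use (and all you need) is the special case where the top group is the iterated wreath product $P_{2^{i-1}}=C_2\wr\cdots\wr C_2$: by associativity $R_2\wr P_{2^{i-1}}\cong(R_2\wr P_{2^{i-2}})\wr C_2$, and $\dlen(X\wr C_2)=\dlen(X)+1$ because $(X\wr C_2)'$ contains $\{(x,x^{-1}):x\in X\}$, which generates a subgroup surjecting onto $X$. Restricting your claim to this case makes the first part sound and is precisely the Kaloujnine/GMS computation the paper points to.

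Second, your treatment of $\operatorname{O}^\pm_{2n}(q)$ does not, as written, yield the stated conclusion. From $[R,R]=[R_0,R_0]$ you only extract $\dlen(R_0)-1\le\dlen(R)\le\dlen(R_0)$ (with a garbled justification), and in the subcase $2n=2^k$ with $R_0\in\Syl_2(\operatorname{O}_{2n-1}(q))$, where $\dlen(R_0)=k$, these bounds give $\dlen(R)\in\{k-1,k\}$, which does not prove $\dlen(R)\in\{k,k+1\}$. The correct deduction is sharper and simpler: for any nontrivial $2$-group one has $\dlen(R)=\dlen(R')+1$, so $[R,R]=[R_0,R_0]$ forces $\dlen(R)=\dlen(R_0)$ exactly; combined with your (correct) $2$-adic bookkeeping that $\dlen(R_0)=k+1$ for $\operatorname{O}_{2n+1}(q)$ and $\dlen(R_0)\in\{k,k+1\}$ for $\operatorname{O}_{2n-1}(q)$ (the value $k$ occurring only when $2n=2^k$), this gives $\dlen(R)\in\{k+1,k\}$ as claimed. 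With these two corrections your proof is exactly the paper's intended argument.
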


\bigskip

We will also use the following related bounds, on the derived length of a finite $p$-group of $\GL_n(F)$, where $F$ is any field, and of $\GL_n(q)$, where $q$ is a prime power as before.
\begin{lem}\label{lem2}
Let $F$ be any field and let $P$ be a finite $p$-subgroup of $\GL_n(F)$. Then  $$\de(P)\leq \lfloor \log_2(n)\rfloor+1.$$ 
\end{lem}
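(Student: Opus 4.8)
The plan is to reduce to the case of a Sylow $p$-subgroup of $\GL_n(\overline{F})$ over an algebraically closed field and then exploit the structure of such $p$-groups as iterated wreath products, invoking Lemma~\ref{lem1}. First I would observe that it suffices to bound $\dlen(P)$ for $P$ a maximal $p$-subgroup of $\GL_n(F)$, and by extending scalars we may assume $F$ is algebraically closed; since a finite group has finite image, we may further assume $F$ is the algebraic closure of a finite field (if $\cha F = 0$, the finite subgroup $P$ is conjugate into $\GL_n(\overline{\bbQ})$ and then, being finite, into $\GL_n(\overline{\bbF}_\ell)$ for suitable $\ell$ by a standard reduction, or one argues directly via the fact that $P$ sits inside $\GL_n$ of a finitely generated field). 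The upshot is that $P$ is contained in a Sylow $p$-subgroup of $\GL_n(\fq)$ for some prime power $q$; if $\cha F = p$ this Sylow subgroup is the group of upper unitriangular matrices, and if $\cha F \ne p$ it has the explicit wreath-product description recalled before Lemma~\ref{lem:dlGL}.

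Next I would handle the two characteristic cases. When $p \mid q$ (defining characteristic), $P$ is conjugate to a subgroup of the upper unitriangular group $U_n(\fq)$, whose derived length is exactly $\lceil \log_2 n \rceil$ (the lower central / derived series of $U_n$ is controlled by the standard filtration by matrices that are the identity above the $j$-th superdiagonal); since $\dlen(P) \le \dlen(U_n(\fq)) = \lceil \log_2 n\rceil \le \lfloor \log_2 n\rfloor + 1$, we are done in this case. When $p \nmid q$, $P$ embeds in a Sylow $p$-subgroup $Q$ of $\GL_n(\fq)$, and $\dlen(Q) = k+1$ by Lemma~\ref{lem:dlGL}, where $k$ is the top index in the $p$-adic expansion $w = a_0 + a_1 p + \cdots + a_k p^k$ of $w = \lfloor n/e \rfloor$ (with $e$ the multiplicative order of $q$ mod $p$, and $e=1$ when $p=2$). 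Since $p^k \le w \le n$ we get $k \le \log_p n \le \log_2 n$, hence $\dlen(P) \le k+1 \le \lfloor \log_2 n\rfloor + 1$; here one uses that $k$ is an integer, so $k \le \lfloor \log_2 n \rfloor$.

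The main obstacle I anticipate is the reduction to positive characteristic — making rigorous that an arbitrary finite $p$-subgroup of $\GL_n(F)$ for $F$ of characteristic zero can be conjugated into $\GL_n$ of an algebraically closed field of positive characteristic without changing its isomorphism type (so that the wreath-product bounds apply). This is a classical fact (a finite subgroup of $\GL_n(\bbC)$ is conjugate into $\GL_n(\mathcal{O})$ for a suitable ring of algebraic integers $\mathcal{O}$, and reduction modulo a prime ideal not dividing the group order is faithful), but it must be cited or sketched carefully. An alternative that sidesteps this entirely: bound $\dlen(P)$ directly by noting any finite $p$-subgroup of $\GL_n(F)$ stabilizes a flag refining its composition series as a module, hence is conjugate into the group of block-upper-triangular matrices with $p$-group diagonal blocks, and then run an induction on $n$ using $\dlen(P) \le 1 + \max(\dlen(P_1), \dlen(P_2))$ where $P$ acts on a submodule of dimension $\lceil n/2 \rceil$ or less; this yields $\dlen(P) \le \lceil \log_2 n \rceil$ by the same recursion that governs $U_n$, which is $\le \lfloor \log_2 n\rfloor + 1$. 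I would use whichever of these is cleanest given the conventions already fixed in the paper.
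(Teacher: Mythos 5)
Your main argument is correct, but it is genuinely different from what the paper does: the paper does not prove Lemma \ref{lem2} at all, it simply quotes it as Lemma 2.3 of \cite{MST}. Your route --- embed $P$ into $\GL_n(\fq)$ for a suitable finite field, then treat defining characteristic via the unitriangular group (derived length at most $\lceil\log_2 n\rceil$) and non-defining characteristic via the wreath-product description of Sylow $p$-subgroups recalled before Lemma \ref{lem:dlGL}, using $p^k\le w\le n$ and the integrality of $k$ to get $k\le\lfloor\log_2 n\rfloor$ --- is a legitimate self-contained proof, and it has the virtue of reusing structure the paper has already set up instead of importing an external lemma. The one place to be more careful is the reduction to finite fields: your parenthetical covers characteristic $0$ (conjugate into $\GL_n$ of a ring of algebraic integers and reduce modulo a prime not above $p$, which is faithful on $P$), and characteristic $p$ needs no reduction at all, but an infinite field of characteristic $\ell\notin\{0,p\}$ is not covered as written. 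There you should either specialize: the entries of the finitely many elements of $P$ generate a finitely generated $\bbF_\ell$-algebra, and a maximal ideal avoiding the finitely many nonzero entries of the differences $g-h$ ($g\neq h$ in $P$) has finite residue field and gives a faithful image of $P$ in some $\GL_n(\fq)$; or lift to characteristic $0$ using that, since $\ell\nmid|P|$, the $\ell$-modular and ordinary representation theories of $P$ agree degreewise. These are standard facts, but they belong in the proof since you flag this reduction as the crux.

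By contrast, the alternative argument you sketch at the end has a genuine gap in non-defining characteristic: the natural module may be irreducible of dimension bigger than $1$ (e.g.\ an extraspecial $p$-group acting irreducibly in dimension $p^a$), so there is no proper nonzero submodule and the recursion cannot start; and even when the module is reducible, its composition factors need not have dimension at most $\lceil n/2\rceil$ --- a constituent of dimension $n-1$ only yields the recursion $f(n)\le 1+f(n-1)$, which gives a linear rather than logarithmic bound. The flag argument is sound in characteristic $p$, where all composition factors are trivial, but that is precisely the unitriangular case you already handled. So rely on your first argument (or simply cite \cite{MST} as the paper does), not the alternative.
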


\begin{proof}
This is Lemma 2.3 in \cite{MST}.
\end{proof}

\begin{lem}\label{lem3}
Let $P$ be a $p$-subgroup of $\GL_n(q)$, where $q$ is a prime power. Then  $$\de(P)\leq \lfloor \log_2(n/e)\rfloor+1$$ where $e=1$ if $p$ divides $q$, and otherwise, $e$ is the order of $q$ modulo $p.$
\end{lem}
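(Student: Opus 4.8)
The plan is to reduce Lemma~\ref{lem3} to Lemma~\ref{lem2} by passing from $\GL_n(q)$ to a general linear group over a larger field, using the order $e$ of $q$ modulo $p$ to control how much smaller the relevant dimension becomes. First I would dispose of the case $p\mid q$: here $e=1$ and a $p$-subgroup $P$ of $\GL_n(q)$ is, in particular, a $p$-subgroup of $\GL_n(F)$ for the field $F=\bbF_q$, so Lemma~\ref{lem2} gives $\de(P)\le\lfloor\log_2 n\rfloor+1 = \lfloor\log_2(n/e)\rfloor+1$ directly.

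Now assume $p\nmid q$, so $p$ is coprime to the characteristic and $e\ge 1$ is the multiplicative order of $q$ modulo $p$. The key point is that a $p$-element of $\GL_n(q)$ is semisimple, hence diagonalizable over $\overline{\bbF}_q$, and its eigenvalues are $p$-power roots of unity; the smallest extension of $\bbF_q$ containing a primitive $p$-th root of unity is $\bbF_{q^e}$, and more generally all $p$-power roots of unity of the relevant order lie in a field $\bbF_{q^e}$ (once one includes the full $p$-part; since we only need that the eigenvalues generate a field of the form $\bbF_{q^{e}}$ over $\bbF_q$, and Galois-conjugacy over $\bbF_q$ permutes eigenvalues in orbits of size dividing the degree, the natural module decomposes accordingly). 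Concretely, I would argue that the natural $\bbF_q[P]$-module $V=\bbF_q^n$ decomposes, after extending scalars, into $P$-invariant pieces defined over $\bbF_{q^e}$: every irreducible $\bbF_q[P]$-submodule of $V$, being a submodule for a $p$-group in cross-characteristic, has $\bbF_q$-dimension a multiple of $e$ (this is the standard fact that $\operatorname{End}_{\bbF_q[P]}$ of such an irreducible contains $\bbF_{q^e}$, or equivalently that Brauer characters of $p$-groups over $\bbF_q$ have degrees divisible by $e$ when $p\nmid q$ — compare the appearance of $e$ and $(\eps q)^e-1$ in the Weir/Carter--Fong description recalled just above). Consequently $V$ is naturally a module over $\bbF_{q^e}$ of dimension $n/e$ on which $P$ acts $\bbF_{q^e}$-linearly, giving an embedding $P\hookrightarrow\GL_{n/e}(\bbF_{q^e})$.

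With that embedding in hand, Lemma~\ref{lem2} applied to the field $F=\bbF_{q^e}$ and dimension $n/e$ yields $\de(P)\le\lfloor\log_2(n/e)\rfloor+1$, which is exactly the claim. (If $e\nmid n$ one simply replaces $n/e$ by $\lfloor n/e\rfloor$ throughout; more precisely, the part of $V$ on which $P$ acts nontrivially has $\bbF_q$-dimension a multiple of $e$ that is at most $n$, and one works with that, absorbing the trivial summand harmlessly.)

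The main obstacle is the middle step: making precise and correct the assertion that a $p$-subgroup $P$ of $\GL_n(q)$ with $p\nmid q$ actually acts $\bbF_{q^e}$-linearly on $V$ (up to the trivial part), i.e.\ that every faithful constituent has dimension divisible by $e$. This requires care because it is a statement about all irreducible $\bbF_q[P]$-modules, not just one-dimensional characters; the clean way is to invoke that $p\nmid q$ forces $\bbF_q[P]$ to be semisimple (Maschke), reduce to an irreducible module $W$, let $E=\operatorname{End}_{\bbF_q[P]}(W)$ be its endomorphism field, observe that a $p$-group acts on $W$ through scalars after base change along $\overline{E}$ on the isotypic pieces so that the central character takes values in $\mu_{p^\infty}\subseteq\overline{\bbF}_q$, and deduce $\bbF_{q^e}\subseteq E$ since $e$ is the order of $q$ modulo $p$; then $\dim_{\bbF_q}W = \dim_E W\cdot [E:\bbF_q]$ is divisible by $[E:\bbF_q]$, which is divisible by $e$. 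Once this dimension-divisibility is nailed down the rest is bookkeeping. One should double-check the edge cases $n<e$ (then $P$ has no nontrivial $p$-part, $P=1$, and the bound is vacuous or trivial) and $p=2$ with small $n$, but these cause no real difficulty.
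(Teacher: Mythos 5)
Your proposal is correct in substance, but it does not follow the paper's route: the paper proves Lemma~\ref{lem3} simply by citing \cite[Lemma~2.8]{MST} (the same source as Lemma~\ref{lem2}), with no argument given, whereas you derive the bound from Lemma~\ref{lem2}. Your reduction works: for $p\mid q$ the claim is literally Lemma~\ref{lem2}; for $p\nmid q$, Maschke applies, and for every irreducible constituent $W$ of the natural module on which $P$ acts nontrivially, the endomorphism field $E=\End_{\bbF_q[P]}(W)$ contains an element of multiplicative order $p$ (the centre of the quotient of $P$ acting faithfully on $W$ embeds into $E^\times$), so $p\mid q^{[E:\bbF_q]}-1$, hence $e\mid [E:\bbF_q]\mid \dim_{\bbF_q}W$; consequently the sum of the nontrivial constituents is an $\bbF_{q^e}$-vector space of dimension at most $\lfloor n/e\rfloor$ on which $P$ acts faithfully and $\bbF_{q^e}$-linearly, and Lemma~\ref{lem2} over $\bbF_{q^e}$ gives $\de(P)\le\lfloor\log_2(n/e)\rfloor+1$. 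What each approach buys: the paper's citation is shortest and defers to the literature, while your argument is self-contained (modulo Lemma~\ref{lem2}) and makes transparent why $e$ enters, namely as the degree over $\bbF_q$ of the field generated by a root of unity of order $p$. Two details to tighten in a final write-up, neither a real gap: the divisibility statement must be asserted for every \emph{nontrivial} constituent (not only faithful ones), and since for a nontrivial non-faithful $W$ the centre of $P$ itself may act trivially, one should pass to the centre of $P/K$, where $K$ is the kernel of the action on $W$, which acts faithfully through $E^\times$ by elements of $p$-power order; also, your preliminary remark that the relevant eigenvalues all lie in $\bbF_{q^e}$ is not literally true (a primitive $p^2$-th root of unity generally lies in a larger field), but the endomorphism-field argument you actually run never uses it --- only the containment $\bbF_{q^e}\subseteq E$ is needed, together with the bookkeeping for the trivial summand and for $e\nmid n$ that you already indicate.
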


\begin{proof} This is Lemma 2.8 in \cite{MST}.
\end{proof}


\section{Solvable and \texorpdfstring{$p$}{p}-solvable groups}\label{s:solv}

In this section, we prove the main theorem for finite $p$-solvable groups and reduce the proof of the main theorem to proving Theorem \ref{th:simple groups}. Recall that for a finite group $G$ and a prime $p$, $n_p (G)$ is the number of irreducible characters of $G$ that are divisible by $p$.

\begin{thm} \label{p-solvable}
Let $p$ be a prime, let $G$ be a finite $p$-solvable group, and let $P$ be a Sylow $p$-subgroup of $G$.  Then ${ \dlen} (P) \le n_p (G) + 1$.
\end{thm}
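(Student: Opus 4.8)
The plan is to argue by induction on $|G|$, using the standard machinery of Clifford theory and Fong reduction for $p$-solvable groups, with the ultimate reduction being to the case where $\OO_{p'}(G)=1$ and then to a situation controlled by the known structure of Sylow subgroups together with the Isaacs–Seitz-type bounds (here substituted by the concrete $\GL_n$ bounds of Lemmas~\ref{lem2} and~\ref{lem3}, since $p$-solvable groups need not satisfy Isaacs–Seitz in full).

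First I would dispose of the trivial cases: if $P$ is abelian then $\dlen(P)\le 1$ and there is nothing to prove, so assume $\dlen(P)\ge 2$, whence by the It\^o--Michler theorem $n_p(G)\ge 1$. Next I would reduce to $\OO_{p'}(G)=1$: if $N=\OO_{p'}(G)\ne 1$, pick $\lambda\in\Irr(N)$; its stabilizer $T=G_\lambda$ contains a Sylow $p$-subgroup of $G$ (up to conjugacy) and by Fong--Reynolds / Clifford correspondence, characters of $G$ lying over $\lambda$ correspond bijectively (preserving the property ``degree divisible by $p$'', after accounting for the index $[G:T]$ which is prime to $p$) to characters of $T$ lying over $\lambda$. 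If $T<G$ one gets $\dlen(P)\le n_p(T)+1\le n_p(G)+1$ by induction. If $T=G$ for all $\lambda$, then $N$ is central (after passing to the situation where all $G$-orbits on $\Irr(N)$ are trivial; using that $\Irr(N)$ is $G$-invariant pointwise forces $N\le\Center(G)$ in the relevant sense), and one handles a central $p'$-subgroup by passing to $G/N$, noting $\Irr(G)$ decomposes along characters of $N$ and $n_p(G/N)\le n_p(G)$ while $P$ embeds in $G/N$.

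With $\OO_{p'}(G)=1$, $p$-solvability gives $F:=\OO_p(G)\ne 1$ and $\Centralizer_G(F)\le F$, so $G$ acts faithfully on $F/\Phi(F)$, an $\bbF_p$-vector space of some dimension $d$; thus $G/F$ — and in particular $P/F$ — embeds in $\GL_d(p)$, giving $\dlen(P/F)\le \lfloor\log_2 d\rfloor+1$ by Lemma~\ref{lem2}. Then $\dlen(P)\le \dlen(F)+\dlen(P/F)$ (derived length is subadditive along normal subgroups), and $\dlen(F)\le 1+\log_2|F|$ when $F$ is nonabelian by Lemma~\ref{lem1}. The crux is then to show that $n_p(G)$ is at least as large as this combined bound — i.e.\ to produce enough $p$-singular characters. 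The key input should be that $G$ acting faithfully and irreducibly (after a further reduction) on a large elementary abelian $p$-section forces many orbits, hence many distinct irreducible character degrees divisible by $p$: one counts characters of $G$ lying over a nonprincipal $\mu\in\Irr(F)$ (these automatically have degree divisible by $[G:G_\mu]$'s $p$-part or by $|F:F\cap\ker\mu|$-related factors that are $p$-singular since $F$ is a $p$-group and $\Centralizer_G(F)\le F$), and shows there are at least $\dlen(F)+\dlen(P/F)-1$ of them with distinct degrees, or at least that $n_p(G)\ge \dlen(F)-1 + (\text{a count coming from the }\GL\text{-action})$.

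The main obstacle I anticipate is precisely this last counting step: bounding $n_p(G)$ from below by a quantity matching $\dlen(F)+\dlen(P/F)-1$. For the $\dlen(F)$ part, one wants to exhibit a chain of characters reflecting the derived series of $F$ — this is where a Taketa/$M$-group style argument or an explicit induction on the derived series of $F$ combined with the action of $P/F$ is needed, and one has to be careful that distinct terms of the derived series really do give rise to distinct $p$-singular degrees in $G$ rather than just in $F$. I would expect to need an auxiliary lemma (likely proved in Section~\ref{s:prelim} or cited from \cite{MST}) asserting that if a $p$-solvable group $G$ has $\OO_{p'}(G)=1$ then $n_p(G)\ge \dlen(P)-1$ directly, bypassing the separate treatment of $F$ and $P/F$; marshalling the Clifford theory to prove such a statement — tracking which characters are $p$-singular through each stabilizer reduction — is the technical heart of the argument.
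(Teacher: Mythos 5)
There is a genuine gap, and it sits exactly where you flag it: the ``counting step.'' After reducing to $\OO_{p'}(G)=1$, your plan is to bound $\dlen(P)\le \dlen(F)+\dlen(P/F)$ with $F=\OO_p(G)$, control the two summands by Lemmas \ref{lem1} and \ref{lem2}, and then show $n_p(G)$ exceeds this combined bound by producing $p$-singular characters lying over nontrivial $\mu\in\Irr(F)$. No argument for this lower bound is given; you defer it to a hoped-for auxiliary lemma ($\OO_{p'}(G)=1$ implies $n_p(G)\ge \dlen(P)-1$), which is precisely the theorem in the reduced case, so the proposal proves nothing beyond the reductions. Worse, the route sketched cannot close the gap in the critical regime: when $n_p(G)$ is small (say $n_p(G)=1$, the IMNT situation quoted in the introduction), one must show $P$ is metabelian, and upper bounds of size $\log_2|F|$ or $\log_2 d$ together with orbit counting on $F/\Phi(F)$ give no mechanism for deducing short derived length from a shortage of $p$-singular characters --- that implication is the actual content of the theorem. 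Also, characters over nontrivial $\mu\in\Irr(F)$ are not automatically $p$-singular (take $G=\Sym_3$, $p=3$, $F=C_3$: the degree-$2$ character lies over a nontrivial $\mu$), so even the raw supply of characters you propose to count is not there. For comparison, the paper's proof never bounds $n_p(G)$ from below at all: it inducts on $|G|$ via a minimal normal subgroup $N$, the only nontrivial case being $N$ elementary abelian $p$ with $n_p(G/N)=n_p(G)$, where a Clifford-theoretic argument shows $N\cap P'=1$ (any $\chi\in\Irr(G)$ over a nontrivial character of $N\cap P'$ would be $p$-singular without containing $N$ in its kernel, a contradiction), and then $P^{(n+1)}\le N\cap P'=1$.

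A secondary error: your $\OO_{p'}$-reduction via stabilizers is wrong as stated. For $N=\OO_{p'}(G)$ and $\lambda\in\Irr(N)$, the stabilizer $T=G_\lambda$ need not contain a Sylow $p$-subgroup of $G$ (e.g. $G=C_7\rtimes C_3$, $p=3$, $\lambda$ nontrivial gives $T=N$), so induction applied to $T$ bounds the derived length of the wrong Sylow subgroup; moreover $n_p(T)\le n_p(G)$ is unjustified, since only the characters of $T$ over $\lambda$ participate in the Clifford correspondence (the defect-group-preserving Fong--Reynolds statement you are imitating is a block-theoretic fact and has no analogue here). This step is easily repaired --- pass to $G/\OO_{p'}(G)$, noting $P\cong P\OO_{p'}(G)/\OO_{p'}(G)$ and $n_p(G/\OO_{p'}(G))\le n_p(G)$, which is what the paper does for a minimal normal $p'$-subgroup --- but the repair does not touch the main gap above.
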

	
\begin{proof}
We work by induction on $|G|$.  Fix $n = n_p (G)$.  
If $n = 0$, then by the It\^o-Michler theorem, we know that $P$ is abelian (and normal in $G$), and so, the result holds.  Thus, we may assume that $n \ge 1$.  Note that this implies that $p$ divides $|G|$ and that $G$ is not abelian.  Let $N$ be a minimal normal subgroup of $G$ and we have that $1 < N < G$.  Notice that either $N$ is an elementary abelian $p$-group or $p$ does not divide $|N|$.  

We first suppose that $n_p (G/N) < n$.  Let $m = n_p (G/N)$, and note that $m \le n - 1$.  By the inductive hypothesis applied to $G/N$, we have that ${\rm dl} (PN/N) \le m + 1$.  Thus, $P^{(m+1)} \le N$.  Since $P \cap N$ is abelian, this implies that $P^{(m+2)} = 1$, and so, $P^{(n+1)} = 1$.  We have ${\rm dl} (P) \le n + 1$.

Next, suppose that $p$ does not divide $|N|$.  This implies that $N \cap P = 1$.  By the inductive hypothesis applied to $G/N$, we have that ${\rm dl} (PN/N) \le n + 1$.  Thus, $P^{(n+1)} \le N$.  Since $P \cap N  = 1$, we have $P^{(n+1)} = 1$, and thus, ${\rm dl} (P) \le n + 1$.

We are left with the case that $n_p (G/N) = n$ and $p$ divides $|N|$.  Since $G$ is $p$-solvable, this implies that $N$ is a $p$-group, and so, $N \le P$. We claim that $N \cap P' = 1$.   Suppose that $N \cap P' > 1$.  Then there exists a character $1 \ne \mu \in \irr {N \cap P'}$.  Let $\nu \in \irr {N \mid \mu}$, and suppose $\theta \in \irr {P \mid \nu}$. Since $\mu$ is a constituent of $\theta_{N \cap P'}$ and $\mu \ne 1$, we deduce that $P'$ is not contained in the kernel of $\theta$.  Hence, $p$ divides $\theta (1)$.  Suppose $\chi \in \irr {G \mid \theta}$ and observe that $\theta$ is a constituent of $\chi_P$, and so, $\nu$ is a constituent of $\chi_N$ and $\mu$ is a constituent of $\chi_{N \cap P'}$.  Let $\gamma$ be any irreducible constituent of $\chi_P$ and let $\delta$ be an irreducible constituent of $\gamma_{N}$.  We know that $\delta$ is conjugate to $\nu$ in $G$ since $N$ is normal in $G$ and $\delta$ and $\nu$ are both constituents of $\chi_N$.  We may write $\delta = \nu^g$ for some $g \in G$.   As $N$ is abelian, $\nu$ is an extension of $\mu$.  Thus, $\delta$ is an extension of $\mu^g$.  Now, $\mu^g$ is nontrivial, so $P'$ is not contained in the kernel of $\gamma$.  This implies that $p$ divides $\gamma (1)$.  Since $\gamma$ was arbitrary, we see that $p$ divides the degrees of all of the irreducible constituents of $\chi_P$, and this implies that $p$ divides $\chi (1)$.  This is a contradiction since all of the irreducible characters of $G$ whose degrees are divisible by $p$ have $N$ in their kernels.  Thus, the claim is proved and $N \cap P' = 1$.

By the inductive hypothesis applied to $G/N$, we have ${\rm dl} (PN/N) \le n + 1$.  Thus, $P^{(n+1)} \le N.$   Since $P' \cap N  = 1$, we have $P^{(n+1)} = 1$, and thus, ${\rm dl} (P) \le n + 1$.  This proves the theorem.
\end{proof}

In fact, we can generalize this further.  Fix a prime $p$.  We say that a finite group $G$ is a $\mathcal {C}_p$-group if all of the nonsolvable composition factors satisfy the hypothesis that  the number of orbits of the irreducible characters whose degree is a multiple of $p$ under their automorphism group is an upper bound for the derived length of their Sylow $p$-subgroups.  

\begin{thm}\label{mathcal C}
Let $p$ be a prime, let $G$ be a $\mathcal {C}_p$-group, and let $P$ be a Sylow $p$-subgroup of $G$.  Then ${\rm dl} (P) \le n_p (G) + 1$.
\end{thm}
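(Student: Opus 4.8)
The plan is to follow the same inductive skeleton as in the proof of Theorem~\ref{p-solvable}, but to handle the extra mileage at nonsolvable composition factors using the $\mathcal{C}_p$ hypothesis. First I would induct on $|G|$, setting $n = n_p(G)$. The case $n = 0$ is the It\^o--Michler theorem exactly as before. For $n \ge 1$, pick a minimal normal subgroup $N$; unlike the $p$-solvable case, $N$ is now a direct product of copies of a simple group $S$, which is either (i) a $p'$-group, (ii) a $p$-group (hence elementary abelian), or (iii) a nonabelian simple group whose order is divisible by $p$ --- and in case (iii), $S$ is by hypothesis one of the groups for which $\dlen(\Syl_p(S)) \le n_p^*(S)$.

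The two ``easy'' reductions carry over verbatim. If $n_p(G/N) < n$, then by induction $\dlen(PN/N) \le n_p(G/N) + 1 \le n$, so $P^{(n)} \le N$; in cases (i) and (ii), $P \cap N$ is abelian, giving $P^{(n+1)} = 1$, while in case (iii) one needs instead that $\dlen(P \cap N) \le 1$, which fails --- so here I would argue differently: if $N$ is nonabelian and $p \mid |N|$, then any $\chi$ lying over a nontrivial character of a Sylow $p$-subgroup of $N$ that is divisible by $p$ forces $n_p(G/N) < n_p(G)$ to be a genuine strict drop proportional to the number of Aut-orbits destroyed, and one can bound $\dlen(P \cap N) \le n_p^*(N) \le n_p(N)$ by the $\mathcal{C}_p$ hypothesis and relate $n_p(N)$-many characters to $p$-singular characters of $G$ over them. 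Similarly the case $p \nmid |N|$ gives $P \cap N = 1$ directly and is immediate.

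The real work is the remaining case: $n_p(G/N) = n$ and $p \mid |N|$. If $N$ is a $p$-group this is exactly the argument in Theorem~\ref{p-solvable} showing $N \cap P' = 1$, hence $P^{(n+1)} \le N$ with $P' \cap N = 1$, so $P^{(n+1)} = 1$. The new case is $N \cong S^t$ with $S$ nonabelian simple, $p \mid |S|$. Here $P \cap N$ is a Sylow $p$-subgroup of $N \cong S^t$, so $\dlen(P \cap N) = \dlen(\Syl_p(S)) \le n_p^*(S)$ by the $\mathcal{C}_p$ hypothesis. The key point I expect to need is a Clifford-theoretic counting: each $\Aut(S)$-orbit of $p$-singular irreducible characters of $S$ gives rise, via induction/restriction through $N \lhd G$ (using that $G$ permutes the $t$ factors and acts on each by automorphisms, and the associated orbit-stabilizer/Clifford correspondence), to a $p$-singular irreducible character of $G$ that does \emph{not} have $N$ in its kernel --- hence contributes to $n_p(G) - n_p(G/N)$. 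Since we are in the case $n_p(G/N) = n_p(G)$, this would force $n_p^*(S) = 0$, i.e.\ $\Syl_p(S)$ abelian by It\^o--Michler, hence $\dlen(P \cap N) = 1$, and then $P^{(n)} \le N$ from induction on $G/N$ combined with $P \cap N$ abelian yields $P^{(n+1)} = 1$ as usual.

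The main obstacle will be making that last Clifford-theoretic step airtight: one must show that if $S$ is nonabelian simple with a $p$-singular character, then $N = S^t \lhd G$ with $P \in \Syl_p(G)$ forces $n_p(G) > n_p(G/N)$. The cleanest route is: take $\varphi \in \Irr(S)$ of $p$-divisible degree; form $\varphi^{\otimes t} \in \Irr(N)$; its $G$-stabilizer $T$ contains $P \cap N$ and the $G$-orbit/induction produces $\chi \in \Irr(G \mid \varphi^{\otimes t})$ with $p \mid \chi(1)$ (the degree $\varphi(1)^t \mid \chi(1)$ and $p \mid \varphi(1)$), and clearly $N \not\le \ker\chi$; since characters of $G$ with $N$ in the kernel are exactly those of $G/N$, we get $\chi$ contributing to the strict inequality. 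Wrapping up the arithmetic --- confirming the strict drop is ``enough'' and that we never need to subtract more than we have --- is the delicate bookkeeping, but it is bounded entirely by $n_p^*$ of the composition factors, which is precisely what the $\mathcal{C}_p$ condition was designed to supply.
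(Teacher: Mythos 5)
Your proposal follows the paper's inductive skeleton almost exactly: induction on $|G|$, a minimal normal subgroup $N$, the same three cases, and in the decisive case $n_p(G/N)<n_p(G)$ with $N=S^t$ nonabelian, the same idea of bounding $\dlen(P\cap N)$ via the $\mathcal{C}_p$ hypothesis and absorbing it into the drop. Two points of comparison. First, in that case the bookkeeping has to be done at the level of orbits and measured against the drop, not against $n_p(N)$: the correct chain is that distinct $\Aut(S)$-orbits of $p$-singular characters of $S$ give distinct $G$-orbits of $p$-singular characters of $N$ (via $\theta\times 1\times\cdots\times 1$), each such $G$-orbit contributes at least one $p$-singular $\chi\in\Irr(G)$ with $N\not\le\ker\chi$, hence $n_p^*(S)\le n_p(G)-n_p(G/N)=n-m$, so $\dlen(P\cap N)=\dlen(Q)\le n-m$ by the $\mathcal{C}_p$ hypothesis and $\dlen(P)\le(m+1)+(n-m)=n+1$. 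The chain you actually wrote, $\dlen(P\cap N)\le n_p^*(N)\le n_p(N)$ together with relating ``$n_p(N)$-many characters'' to characters of $G$ above them, does not work literally (characters of $N$ in a single $G$-orbit lie under the same characters of $G$, and $n_p(N)$ is not the quantity that bounds anything useful); the orbit version you gesture at with ``Aut-orbits destroyed'' is precisely the paper's argument, so this is a matter of tightening rather than a different idea. Second, in the case $n_p(G/N)=n_p(G)$ with $p\mid|N|$, your Clifford-theoretic observation correctly shows the nonabelian possibility cannot occur (indeed $n_p^*(S)=0$ is impossible for a nonabelian simple $S$ with $p\mid|S|$, so the fallback you sketch there is never needed --- which is fortunate, since as written it has an off-by-one: induction on $G/N$ only gives $P^{(n+1)}\le N$, not $P^{(n)}\le N$). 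For $N$ elementary abelian you then reuse the $N\cap P'=1$ claim from Theorem \ref{p-solvable}, which indeed uses no $p$-solvability of $G$ and, combined with $P^{(n+1)}\le N$, closes the case. This is a genuine divergence from the paper, which at this point instead observes that every character over a nontrivial $\nu\in\Irr(N)$ has $p'$-degree and invokes the Navarro--Tiep theorem to conclude $PN/N$ is abelian, whence $\dlen(P)\le 2\le n+1$. Your route is more elementary (it avoids that CFSG-dependent input by recycling an argument already in the paper), while the paper's is shorter at that juncture; both are valid ways to finish.
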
	

\begin{proof}
We work by induction on $|G|$.  By Theorem \ref{p-solvable}, we may assume that $G$ is not $p$-solvable.  Fix $n = n_p (G)$.  If $G$ is nonabelian simple, then since we are assuming that $G$ is a $\mathcal {C}_p$-group, we have a stronger result than the desired conclusion, so we are good.  Also, if $n = 0$, then by the It\^o-Michler theorem, we know that $P$ is abelian (and normal in $G$), and so, the result holds.  Thus, we may assume that $n \ge 1$.  Note that this implies that $p$ divides $|G|$ and that $G$ is not abelian.  Let $N$ be a minimal normal subgroup of $G$ and we have that $1 < N < G$.  
	
We first suppose that $n_p (G/N) < n$.  Let $m = n_p (G/N)$, and note that $m \le n - 1$.  By the inductive hypothesis applied to $G/N$, we have that ${\rm dl} (PN/N) \le m + 1$.  Thus, $P^{(m+1)} \le N$.  If $N$ is abelian, then ${\rm dl} (P) \le m+2 \le n+1$.  Thus, we may assume that $N$ is nonabelian.  We know that $N$ is a direct product of copies of some nonabelian finite simple group $S$.  In particular, $N = S^k$ for some positive integer $k$.  Let $Q$ be a Sylow $p$-subgroup of $S$, and we know that a Sylow subgroup of $N$ is isomorphic to $Q^k$.    Notice that we have at most $n-m$ orbits of irreducible characters of $N$ whose degrees are divisible by $p$.  This yields at most $n-m$ orbits of irreducible characters of $S$ whose degrees are divisible by $p$.  Since we are assuming that $G$ is a $\mathcal {C}_p$-group, this implies that ${\rm dl} (Q) \le n-m$.  Since $Q$ and $Q^k$ have the same derived length, we obtain ${\rm dl} (Q^k) \le n - m$.   This yields $(P^{(m+1)})^{(n-m)} = P^{(n-m+m+1)} = P^{(n+1)} = 1$.  We deduce that ${\rm dl} (P) \le n + 1$.  
	
Next, suppose that $p$ does not divide $|N|$.  This implies that $N \cap P = 1$.  By the inductive hypothesis applied to $G/N$, we have that ${\rm dl} (PN/N) \le n + 1$.  Thus, $P^{(n+1)} \le N$.  Since $P \cap N  = 1$, we have $P^{(n+1)} = 1$, and thus, ${\rm dl} (P) \le n + 1$.
	
We are left with the case $n_p (G/N) = n$ and $p$ divides $|N|$.  This implies that there are no irreducible characters of $N$ whose degree is divisible by $p$.  By the It\^o-Michler Theorem, this implies that $N$ has a normal abelian Sylow $p$-subgroup.  Since $N$ is either elementary abelian or the direct product of copies of a nonabelian simple group, we conclude that $N$ must be an elementary abelian $p$-group.  Let $1_N \ne \nu \in \irr N$, and let $\chi \in \irr {G \mid \nu}$.  We see that $p$ does not divide $\chi (1)$; so if $T$ is the stabilizer of $\nu$ in $G$, then $p$ does not divide $|G:T|$.  Hence, $T$ contains a Sylow $p$-subgroup $P$ of $G$.  Now, we know that $p$ does not divide $\gamma (1)/\nu (1)$ for all $\gamma \in \irr {T \mid \nu}$.  By the Navarro-Tiep theorem (\cite[Theorem A]{NT}), this implies that $P/N$ is abelian.  Hence, ${\rm dl} (P) \le 2$ which  meets the bound since $n \ge 1$.  This proves the result.
\end{proof}


\section{Finite simple groups}\label{s:simple}

The aim of this section is to prove Theorem \ref{th:simple groups}. That is, we show that every finite nonabelian simple group is a $\mathcal{C}_p$-group, as defined before Theorem \ref{mathcal C}.

Let $G$ be a finite group and let $p$ be a prime. Denote  by $\Irr_p(G)$ the set of irreducible characters of $G$ whose degrees are divisible by $p$. Define $$\cd_p(G)=\{\chi(1):\chi\in\Irr_p(G)\}$$ as the set of degrees of such characters.  For brevity, we call characters in $\Irr_p(G)$ $p$-singular characters. As before, let $n_p(G)=|\Irr_p(G)|$, and let $n_p^*(G)$ denote the number of orbits in  $\Irr_p(G)$ under the action of the automorphism group $\Aut(G)$ of $G$. Since  all characters within the same $\Aut(G)$-orbit have the same degrees, it is clear that $$|\cd_p(G)|\leq n_p^*(G).$$  

We follow  the notation and terminology of \cite{Navarro_b1} for block theory. Suppose $p\mid |G|$, and let  $B$ be a Brauer $p$-block of $G$ with defect group $D=D(B)$. Write $|G|_p=p^a$ and $|D|=p^d$, so that the defect of the block $B$ is the nonnegative integer $d=d(B)$. For each $\chi\in\Irr(B)$, the degree $\chi(1)$  satisfies $$\chi(1)_p=p^{a-d+\he(\chi)},$$ where $\he(\chi)$ denotes the height of $\chi$ (see \cite[Corollary 3.17]{Navarro_b1}).
Note that if $B$ is a block of $G$ with defect $d<a$, then  $p^{a-d}$ divides $\chi(1)$ for all $\chi\in\Irr(B)$. In particular, each such degree is divisible by  $p$.

\subsection{Sporadic groups}

We begin by proving Theorem \ref{th:simple groups} for the sporadic simple groups. 
\begin{lem}\label{lem:sporadic}
Let $G$ be a sporadic simple group or the Tits group $\tw{2}\type{F}_4(2)'$, and let $p$ be a prime dividing $|G|$. Let $P$ be a Sylow $p$-subgroup of $G$. Then $\de(P)\leq |\cd_p(G)|$ with equality if and only if $(G,p)=({\rm M}_{22},2)$.
\end{lem}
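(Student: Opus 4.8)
The plan is to verify Lemma~\ref{lem:sporadic} by a finite, case-by-case check over the $26$ sporadic simple groups together with the Tits group $\tw{2}\type{F}_4(2)'$ and all primes $p$ dividing the order of each such group. For each pair $(G,p)$, one needs two pieces of data: the derived length $\de(P)$ of a Sylow $p$-subgroup $P$ of $G$, and the cardinality $|\cd_p(G)|$ of the set of $p$-singular character degrees. The character-degree side is entirely explicit: the full character tables of these groups are available in the ATLAS and in GAP's character table library, so $|\cd_p(G)|$ is obtained by listing $\{\chi(1):\chi\in\Irr(G),\ p\mid\chi(1)\}$ and counting distinct values. The inequality $\de(P)\le|\cd_p(G)|$ then has to be checked, and the claim is that equality holds precisely for $(G,p)=(\mathrm{M}_{22},2)$.

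First I would dispose of the large primes: when $p$ divides $|G|$ to the first power (or, more generally, when a Sylow $p$-subgroup is cyclic or abelian), $\de(P)=1$, and since $p\mid|G|$ forces at least one $p$-singular degree to exist, $|\cd_p(G)|\ge 1=\de(P)$, with equality only if there is a unique $p$-singular degree. This already handles the overwhelming majority of pairs $(G,p)$, since for most sporadic groups only the prime $2$ (and sometimes $3$) has non-abelian Sylow subgroups. For the remaining ``small prime, large Sylow'' cases one needs the structure of $P$. Here I would use the known descriptions of Sylow $p$-subgroups of sporadic groups (e.g. from Wilson's work and the literature compiled for this purpose), or, where $P$ embeds in a symmetric group or a classical group, invoke Lemmas~\ref{lem:dlsymmetric}, \ref{lem:dlGL}, \ref{lem:dlclassical2}, \ref{lem2} or \ref{lem3} to bound $\de(P)$; in fact Lemma~\ref{lem1} gives the cheap universal bound $\de(P)\le 1+\log_2 p^{n}$ where $p^n=|P|$, which for the modest Sylow orders occurring in sporadic groups is often already good enough to beat $|\cd_p(G)|$.

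For the equality analysis I would single out $(\mathrm{M}_{22},2)$: a Sylow $2$-subgroup of $\mathrm{M}_{22}$ has order $2^7$ and is known to have derived length $3$, while an inspection of the character table of $\mathrm{M}_{22}$ shows exactly three distinct even character degrees, so $\de(P)=3=|\cd_p(G)|$ there. Then I would argue that in every other case the inequality is strict: either $|\cd_p(G)|$ is strictly larger than the (often small) derived length, which is visible from the degree lists, or $\de(P)$ is forced down by the structural bounds above. Concretely, this amounts to a short table listing, for each of the finitely many pairs $(G,p)$ with $P$ non-abelian, the values of $\de(P)$ and $|\cd_p(G)|$ and confirming $\de(P)<|\cd_p(G)|$ except at $(\mathrm{M}_{22},2)$.

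The main obstacle is not conceptual but bookkeeping: one must be sure to have the correct derived length of every non-abelian Sylow $p$-subgroup appearing, including the $2$-Sylow subgroups of the larger sporadic groups (where $P$ can be quite large and its derived length is not always immediately in the literature in the form needed) and a handful of odd-prime cases such as $(\mathrm{Co}_1,3)$, $(\mathrm{Fi}_{22},3)$, $(\mathrm{McL},3)$, or the $3$- and $5$-Sylow subgroups of the Monster-adjacent groups. For those I would fall back on Lemma~\ref{lem1}'s logarithmic bound combined with the (large) number of $p$-singular degrees, which makes strictness comfortable, and only where that margin is tight would I invoke the precise Sylow structure. A GAP computation using the character table library makes the character-degree count and the small-group derived-length checks routine, so the write-up reduces to recording the resulting comparison and pointing to the references for the Sylow structures used.
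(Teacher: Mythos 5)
Your proposal is correct and follows essentially the same route as the paper: an exhaustive computational check of $\de(P)$ and $|\cd_p(G)|$ over all pairs $(G,p)$ with $P$ nonabelian (the abelian case being dispatched by It\^o--Michler), falling back on the logarithmic bound of Lemma~\ref{lem1} precisely for the largest groups ($\mathrm{Th}$, $\mathrm{Ly}$, $\mathrm{Fi}_{24}'$, $\mathrm{B}$, $\mathrm{M}$) where direct Sylow computations are costly, and isolating the equality case $(\mathrm{M}_{22},2)$ from the resulting table. This matches the paper's proof and Table~\ref{Tab1} in both structure and substance.
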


\begin{proof}
We first observe that if $p$ divides $|G|$ and a Sylow $p$-subgroup $P$ of $G$ is abelian, then $\de(P)=1$. Moreover, since $G$ is nonabelian simple, $P$ is not normal in $G$, and thus by the It\^{o}-Michler theorem (\cite[Theorem 7.1]{Navarro_b2}), we have $|\cd_p(G)|\ge 1$. Therefore, $\de(P)\leq |\cd_p(G)|$ in this case. We now  assume that $P$ is nonabelian, so  in particular, $|P|\ge p^3.$ 

Assume first that $G$ is not one of the simple groups in  $ \{\textrm{Th},\textrm{Ly},\textrm{Fi}_{24}',\textrm{B},\textrm{M}\}$. We can compute the derived length of the Sylow $p$-subgroup of $G$ using MAGMA \cite{magma} and compute the number $|\cd_p(G)|$ of $p$-singular character degrees  using GAP \cite{GAP}. For  all such cases with $\de(P)\ge 2$, we list in Table \ref{Tab1} the values of $\de(P),$ $|\cd_p(G)|$ and $\log_p(|P|)=\log_p(|G|_p)$. Note that all Sylow subgroups of $\textrm{J}_1$ are abelian, so $\textrm{J}_1$ does not appear in the table.

We now handle the exceptional cases where computational data is less directly available.

\medskip
(a) $G=\textrm{Th}$.   

Case $p=2:$  $|\cd_2(G)|=26$ and $|P|=2^{15}$. By Lemma \ref{lem1}, we have $$\de(P)\leq 1+\log_2 (15)\leq 5<|\cd_2(G)|.$$

Case $p=3:$  $|\cd_3(G)|=25$ and $|P|=3^{10}$, hence $\de(P)\leq 1+\log_2(10)\leq 5<|\cd_3(G)|.$

Case $p=5:$  $\cd_5(G)=22$ and $|P|=5^3$, so  $\de(P)\leq 1+\log_2 3\leq 3<|\cd_5(G)|.$

\medskip
(b)  $G=\textrm{Ly}$. 

Case $p=2:$ $|\cd_2(G)|=28$ and $|P|=2^8$, so $\de(P)\leq 1+\log_2(8)<28=|\cd_2(G)|.$

Case  $p=3:$ 
$|\cd_3(G)|=15$ and $|P|=3^7$, so  $\de(P)\leq 1+\log_2(7)\leq 4<28=|\cd_2(G)|.$

Case $p=5:$  $|\cd_5(G)|=22$ and $|P|=5^6$, thus   $\de(P)\leq 1+\log_2(6)<|\cd_5(G)|.$

\medskip
(c) $G=\textrm{Fi}_{24}'$. 

Case $p=2:$ $|\cd_2(G)|=63$ and $|P|=2^{21}$ and clearly $$|\cd_2(G)|=63>6\ge 1+\log_2(21)\ge \de(P).$$

Case $p=3:$ $|\cd_3(G)|=54$ and $|P|=3^{16}$; $\de(P)\leq 1+\log_2(16)=5<54.$

Case $p=7:$ $|\cd_7(G)|=71$ and $|P|=7^{3}$, so $\de(P)\leq 1+\log_2(3)\leq 3<71.$

\medskip
(d) $G=\textrm{B}$. 

We have $(p,|\cd_p(G)|,\log_p(|P|))=(2,110,41), (3,142,13)$ or $(5,143,6)$. In each case, Lemma \ref{lem1} gives $\de(P)\leq 1+\log_2(a)< |\cd_p(G)|$, where $|P|=p^a.$

\medskip
(e) $G=\textrm{M}$. We need to consider the cases: $p\in \{2,3,5,7,13\}$. Let $|P|=p^a$. Then $(p,|\cd_p(G)|,a)$ is one of the following: \[(2,115,46), (3,103,20), (5,104,9),(7,129,6),(13,122,3).\]
In each case, we verify that $\de(P)<|\cd_p(G)|$.

Finally, the equality $\de(P)=|\cd_p(G)|$ occurs  if and only if $(G,p)=(\textrm{M}_{22},2)$ as verified using the argument above together with Table \ref{Tab1}.
The proof is now complete.
\end{proof}

\begin{table}[htp]
\caption{Derived lengths of Sylow $p$-subgroups and the number of $p$-singular character degrees of sporadic simple groups}
\begin{center}
\begin{tabular}{c|cccc|c|cccc}\hline
$G$ & $p$ & $\de(P)$ & $|\cd_p(G)|$ &$\log_p(|P|)$ &$G$ & $p$ & $\de(P)$ & $|\cd_p(G)|$ &$\log_p(|P|)$\\\hline

$\textrm{M}_{11}$ & $2$ & $2$ & $4$ & $4$ & $\textrm{He}$ & $2$ & $3$ & $14$ & $10$\\

$\textrm{M}_{12}$ & $2$ & $2$ & $6$ & $6$ & & $3$ & $2$ & $14$ & $3$\\

 & $3$ & $2$ & $6$ & $3$ && $7$ & $2$ & $10$ & $3$\\

$\textrm{M}_{22}$ & $2$ & $3$ & $3$ & $7$ & $\textrm{HN}$ & $2$ & $3$ & $30$ & $14$  \\

$\textrm{M}_{23}$ & $2$ & $3$ & $6$ & $7$ &  & $3$ & $2$ & $28$ & $6$ \\

$\textrm{M}_{24}$ & $2$ & $3$ & $8$ & $10$&  & $5$ & $3$ & $27$ & $6$ \\

 & $3$ & $2$ & $12$ & $3$ & $\textrm{Fi}_{22}$ & $2$ & $4$ & $38$ & $17$ \\
 
 $\textrm{HS}$ & $2$ & $3$ & $11$ & $9$ &  & $3$ & $3$ & $39$ & $9$\\
 
 & $5$ & $2$ & $8$ & $3$ & $\textrm{Fi}_{23}$ & $2$ & $4$ & $68$ & $18$\\
 
 $\textrm{J}_{2}$ & $2$ & $3$ & $10$ & $7$ &  & $3$ & $3$ & $64$ & $13$\\
 
  & $3$ & $2$ & $10$ & $3$ & $\textrm{ON}$ & $2$ & $3$ & $14$ & $9$ \\
  
  $\textrm{Co}_{1}$ & $2$ & $4$ & $67$ & $21$&  & $7$ & $2$ & $8$ & $3$ \\
  
 & $3$ & $3$ & $72$ & $9$ & $\textrm{J}_3$ & $2$ & $3$ & $9$ & $7$ \\
  
 & $5$ & $2$ & $75$ & $4$&  & $3$ & $2$ & $8$ & $5$ \\
 
  $\textrm{Co}_{2}$ & $2$ & $4$ & $27$ & $18$ & $\textrm{Ru}$ & $2$ & $4$ & $20$ & $14$ \\ 
  
 & $3$ & $2$ & $27$ & $6$ &  & $3$ & $2$ & $19$ & $3$ \\
  
 & $5$ & $2$ & $31$ & $3$ & & $5$ & $2$ & $13$ & $3$ \\
 
 $\textrm{Co}_{3}$ & $2$ & $3$ & $22$ &$10$ & ${}^2\textrm{F}_4(2)'$ & $2$ & $3$ & $9$ & $11$ \\ 
  
 & $3$ & $2$ & $9$ & $7$ &  & $3$ & $2$ & $8$ & $3$  \\ 
  
 & $5$ & $2$ & $18$ & $3$\\
 
$\textrm{McL}$ & $2$ & $3$ & $11$ & $7$ \\
  
 & $3$ & $2$ & $9$ & $6$ \\
  
 & $5$ & $2$ & $7$ & $3$ \\ \hline
 
\end{tabular}
\end{center}
\label{Tab1}
\end{table}%

\subsection{Alternating groups}

We now consider the alternating groups. Let $n\ge 5$ and let $\Omega=\{1,2,\dots,n\}$.  Let $\Sym_n$ and $\Alt_n$ denote the symmetric group and alternating group on $\Omega$, respectively. Let $p$ be a prime dividing $|A_n|$. Then $2\leq p\leq n.$ Set $G=\Alt_n$, and let $\widetilde{P}$ be a Sylow $p$-subgroup of $\Sym_n$. Then $P=G\cap\widetilde{P}$ is a Sylow $p$-subgroup of $G$. 
From Lemma \ref{lem:dlsymmetric}, we know that $$\de(\widetilde{P})\leq \lfloor \log_p(n)\rfloor.$$ Since $P\leq \widetilde{P}$, it follows that $\de(P)\leq \de(\widetilde{P})$. Therefore, 
\[\de(P)\leq \lfloor \log_p(n)\rfloor.\]

We refer the reader to \cite{James}, \cite{JK} or \cite{Olssonb} for background information on the representation theory of symmetric groups.
We fix some notation. Let $\lambda=(\lambda_1,\lambda_2,\dots,\lambda_r)$ be a finite sequence of positive integers such that $\lambda_1\ge \lambda_2\ge \cdots \ge\lambda_r$. Define $|\lambda|=\lambda_1+\lambda_2+\cdots+\lambda_r$, and we say that $\lambda$ is a partition of $|\lambda|$. The conjugate partition of $\lambda$ is denoted by $\lambda'$. The Young diagram associated to $\lambda$ is the set $$[\lambda]=\{(i,j)\in\mathbb{N}^2:1\leq i\leq r,1\leq j\leq \lambda_i\}.$$ For a given $(i,j)\in[\lambda]$, we denote by $H_{i,j}(\lambda)$ the corresponding $(i,j)$-hook and let $h_{i,j}(\lambda)=|H_{i,j}(\lambda)|$. A $p$-hook of $\lambda$ is a hook of length $p$, that is, $h_{(i,j)}(\lambda)=p$. A partition $\lambda$  is called a $p$-core partition if it has no $p$-hook. For each partition $\lambda$, we can repeatedly remove   $(i,j)$-rim hook of length $p$, where $H_{i,j}(\lambda)$ is a $p$-hook, to obtain a unique $p$-core partition, denoted  $\lambda_{(p)}$. The numbers of $p$-hooks removed  in this process is called the $p$-weight of $\lambda$, denoted  $w_p(\lambda)$.  We then have: \[n=|\lambda|=|\lambda_{(p)}|+pw_p(\lambda).\]

The irreducible characters of $\Sym_n$ are labeled by partitions of $n$. For a partition $\lambda$ of $n$, let $\chi^{\lambda}$ denote the corresponding irreducible character of $\Sym_n$.  If $\lambda\neq \lambda'$, then $\chi^{\lambda}\vert_{\Alt_n}=\chi^{\lambda'}\vert_{\Alt_n}$ remains irreducible, and hence $\{\chi^{\lambda}\vert_{\Alt_n}\}$ forms an $\Sym_n$-orbit of irreducible characters of $\Alt_n$. In particular, if $p$ divides $\chi^{\lambda}(1)$, then its restriction to $\Alt_n$ is also an irreducible character of degree divisible by $p$. If $\lambda=\lambda'$, that is, $\lambda$ is self-conjugate, then $\chi^\lambda\vert_{\Alt_n}$  decomposes into a sum of two irreducible characters, denoted $\chi^{\lambda\pm}$ which have equal degrees. The pair $\{\chi^{\lambda\pm}\}$ forms an $\Sym_n$-orbit of irreducible characters of $\Alt_n$.  In the case  $p$ is odd, both $\chi^{\lambda\pm}$ have degree divisible by $p$ whenever $p\mid \chi^{\lambda}(1)$. When $p=2$, if $a-d\ge 2$ (in the context of block theory), then $4\mid\chi^{\lambda}(1)$, and thus each $\chi^{\lambda\pm}$ has even degree, as wanted.

By Nakayama's conjecture (see \cite[6.2.21]{JK}), two irreducible characters $\chi^{\alpha}$ and $\chi^{\beta}$ of $\Sym_n$ labeled by two partitions $\alpha$ and $\beta$ of $n$ lie in the same $p$-block $B$ of $\Sym_n$ if and only if they have the same $p$-core, that is $\alpha_{(p)}=\beta_{(p)}$. Suppose $B$ has positive $p$-weight $w>0$,  and let $D$ be the defect group of $B$. Then: $|D|=p^d=|\Sym_{pw}|_p$ (see \cite[Prop. 11.3]{Olssonb}).  If $|\alpha_{(p)}|\ge p$, then $d<a$, where $|\Sym_n|_p=p^a$ and hence $p^{a-d}$ divides $\chi^\alpha(1)$. Furthermore, if $|\alpha_{(p)}|\ge 2p$, then $a-d\ge 2$, and so $p$ divides $\chi^{\alpha\pm }(1)$ in the self-conjugate case.

To obtain a lower bound on the number of $\Sym_n$-orbits of $p$-singular irreducible characters of $\Alt_n$, we proceed as follows:
We first identify  various $p$-core partitions  $\mu$ such that $|\mu|\ge p$ and construct as many as possible partitions  $\lambda$ of $n$ in such a way that $\lambda_{(p)}=\mu,$ and $n=|\mu|+pw$ with $w=w_p(\lambda)>0$. 

For a real number $x$, we frequently use the following inequalities $x\ge \lfloor x\rfloor> x-1$, where $\lfloor x\rfloor$ denotes the floor function.
\begin{lem}\label{lem:Alternating}
Let $G=\Alt_n$ with $n\ge 5.$ Let $p$ be a prime divisor of $|G|$ and let $P$ be a Sylow $p$-subgroup of $G$. Then $\de(P)\leq n_p^*(G)$.

\end{lem}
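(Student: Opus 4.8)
The plan is to bound $\de(P)$ from above using the structure of Sylow $p$-subgroups of $\Alt_n$ (which gives $\de(P) \le \lfloor \log_p n \rfloor$, as noted before the lemma), and to bound $n_p^*(G)$ from below by exhibiting enough $\Sym_n$-orbits of $p$-singular irreducible characters of $\Alt_n$. I would treat the cases $p$ odd and $p = 2$ together where possible, splitting off small $n$ by direct computation. The core idea is: fix a $p$-core partition $\mu$ with $p \le |\mu|$ (and, when $p=2$, with $|\mu| \ge 2p$ so that the self-conjugate issue and the $4 \mid \chi^\lambda(1)$ condition are handled), write $n = |\mu| + pw$ with $w = w_p(\lambda) > 0$, and count partitions $\lambda$ of $n$ with $\lambda_{(p)} = \mu$. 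Each such $\lambda$ (up to conjugation) gives at least one $\Sym_n$-orbit of a $p$-singular character of $\Alt_n$: if $\lambda \ne \lambda'$ the restriction $\chi^\lambda|_{\Alt_n}$ is irreducible of degree divisible by $p$ (since $p^{a-d} \mid \chi^\lambda(1)$ and $a > d$), while if $\lambda = \lambda'$ the two constituents $\chi^{\lambda\pm}$ have degree divisible by $p$ under the stated hypotheses. Distinct $p$-cores $\mu$, or distinct conjugacy classes $\{\lambda, \lambda'\}$ with the same $p$-core, yield distinct $\Sym_n$-orbits, so these counts add up.

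Concretely, I would first dispose of the case where $P$ is abelian (then $\de(P) = 1 \le n_p^*(G)$, using that $G$ is nonabelian simple and It\^o--Michler, exactly as in Lemma~\ref{lem:sporadic}); this covers in particular $p^2 > |G|_p$, i.e. essentially $p^2 \le n < p^3$ territory where $\lfloor \log_p n \rfloor \le 1$. So assume $\de(P) \ge 2$, hence $\lfloor \log_p n \rfloor \ge 2$ and thus $n \ge p^3$ (for $p$ odd $n \ge 9$ at minimum, more precisely $n \ge p^3$). Now it suffices to produce $\lfloor \log_p n \rfloor$ distinct $\Sym_n$-orbits of $p$-singular irreducible characters. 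Since $n \ge p^3$ we have room to pick, for each $j$ with $1 \le j \le \lfloor \log_p n \rfloor$, a distinct $p$-core partition $\mu_j$ of an appropriate size so that $n - |\mu_j| > 0$ is divisible by $p$ and the $p$-weight is positive; for instance one can use $p$-cores of the shape of a single row $(m)$ with $1 \le m \le p-1$ combined with staircase-type $p$-cores, or simply vary $|\mu_j|$ over a suitable arithmetic progression modulo $p$ and use the abundance of $p$-core partitions of a given size. The key quantitative input is that the number of $\Sym_n$-orbits so obtained grows at least like $\log_p n$, which is comfortably more than the crude upper bound $\de(\widetilde P) \le \lfloor \log_p n\rfloor$ on the left side.

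The main obstacle I anticipate is the bookkeeping when $p = 2$: here one cannot use both $\chi^{\lambda +}$ and $\chi^{\lambda -}$ freely, self-conjugate partitions require $a - d \ge 2$ (equivalently $|\lambda_{(2)}| \ge 4$) to guarantee even degree of the constituents, and $2$-core partitions are severely constrained (they are exactly the staircases $\delta_k = (k, k-1, \dots, 1)$, of sizes $\binom{k+1}{2}$), so the supply of distinct $2$-cores of size $\ge 4$ is thinner and one must instead lean on producing many \emph{non-conjugate} partitions $\lambda$ with a \emph{fixed} $2$-core but varying $2$-quotient. I would handle this by choosing a single $2$-core $\mu$ of size $4$ or $6$ and counting pairs $\{\lambda, \lambda'\}$ of partitions of $n$ with that $2$-core and positive $2$-weight; the number of such partitions is controlled by partitions of the $2$-weight $w = (n - |\mu|)/2$ into two parts (the $2$-quotient), which again is at least linear in $w$ and hence far exceeds $\lfloor \log_2 n \rfloor$. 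A secondary nuisance is verifying the claim "$p^{a-d} \mid \chi^\lambda(1)$ with $a > d$" in each constructed case, but this follows directly from the block-theoretic facts recalled before the lemma ($|D| = |\Sym_{pw}|_2$ and $|\mu| \ge p$ forces $d < a$), so it is routine. Finally, a handful of genuinely small $n$ (say $5 \le n \le$ some explicit bound depending on $p$) where the asymptotic counting is not yet valid would be checked by direct reference to the character tables / known values of $n_p^*(\Alt_n)$.
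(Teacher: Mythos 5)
Your overall strategy coincides with the paper's: bound $\de(P)\leq \lfloor\log_p(n)\rfloor$ via the Sylow structure of $\Sym_n$, and bound $n_p^*(G)$ from below by producing partitions of $n$ whose $p$-core has size at least $p$ (at least $2p$ in the self-conjugate $p=2$ situation), so that the block-theoretic inequality $d<a$ forces $p\mid\chi^\lambda(1)$, with small $n$ handled computationally. However, as written the execution has genuine gaps. First, your case division is wrong: for $p$ odd a Sylow $p$-subgroup of $\Alt_n$ is already nonabelian once $n\geq p^2$ (it contains $C_p\wr C_p$), and $\lfloor\log_p n\rfloor\leq 1$ exactly when $n<p^2$; so the range $p^2\leq n<p^3$ is \emph{not} absorbed by your abelian case, and your nonabelian analysis incorrectly assumes $n\geq p^3$. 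This particular slip is repairable, since your counting really only needs about $p\bigl(\lfloor\log_p n\rfloor+1\bigr)\leq n$, which holds from $n\geq p^2$ on when $p\geq 5$, but the claim as stated is false.

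Second, the supply of $p$-cores you invoke does not exist precisely where you need it. For $p=2$ there is no $2$-core of size $4$ ($2$-cores are staircases, of triangular sizes $1,3,6,10,15,\dots$), and the core size must be congruent to $n$ modulo $2$; so for odd $n$ the smallest usable core of size at least $4$ has size $15$, and otherwise one must work with the size-$3$ core $(2,1)$ and restrict to non-self-conjugate partitions --- exactly the paper's choices $(2,1),(5,4,3,2,1)$ for odd $n$ and $(3,2,1),(4,3,2,1)$ for even $n$. Third, the appeal to ``abundance of $p$-core partitions of a given size'' is only justified for $p\geq5$: \cite[Theorem 1]{GO} requires $t\geq4$, and for $p=3$ some sizes admit no $3$-core at all (e.g.\ size $3$), so your primary plan of choosing $\lfloor\log_3 n\rfloor$ cores with sizes in an arithmetic progression can fail; you would have to fall back, as you propose only for $p=2$, on fixing one or two $3$-cores and producing many pairwise non-conjugate partitions with those cores (the paper's explicit families with cores $(3,1)$, $(3,1^2)$, $(4,2,1^2)$, $(4,2)$). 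Finally, a small but necessary point: when you use several distinct cores, choose them of distinct sizes (or at least pairwise non-conjugate), since $\lambda'$ has $p$-core $(\lambda_{(p)})'$, and partitions with conjugate cores could label the same $\Sym_n$-orbit of characters of $\Alt_n$.
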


\begin{proof}
If $5 \leq n \leq 24$, then using \textsc{MAGMA} \cite{magma}, one can verify that $\de(P) \leq |\cd_p(G)|$, and the lemma follows. Thus, assume $n \geq 25$.
Since $\Aut(G) \cong \Sym_n$, it follows that the number of $\Aut(G)$-orbits of $p$-singular irreducible characters of $G$ equals the number of $\Sym_n$-orbits of such characters. Let $p$ be a prime divisor of $|G|$, so $2 \leq p \leq n$. Let $P$ be a Sylow $p$-subgroup of $G$.

\medskip
Assume $P$ is abelian.  
By the It\^{o}-Michler theorem, since $G$ is simple and $P$ is not normal, there exists an irreducible character of $G$ whose degree is divisible by $p$. Thus, $|\cd_p(G)| \geq 1 = \de(P)$, and the lemma holds.

\medskip
Assume $P$ is nonabelian.  
We now construct enough $p$-singular irreducible characters of $\Sym_n$ to show $n_p^*(G) \geq \de(P)$.

Let $n = a_0 + a_1 p + \cdots + a_t p^t$ be the $p$-adic expansion of $n$ with $0 \leq a_i < p$ and $a_t > 0$. Since $P$ is nonabelian, we have $t \geq 2$, so $n \geq p^2 + r$, where $r = n \bmod p$.
Recall from above that
$
\de(P) \leq \lfloor \log_p(n) \rfloor\leq \log_p(n).$
We proceed case by case based on the value of $p$.

\medskip
 (1) Assume that $p \geq 5$.

\smallskip
(a) Assume that $1 \leq r \leq p - 1$. Write $n = m p + r$ for some integer $m \geq 1$ and fixed remainder $r$. We will construct many pairwise distinct, non-conjugate partitions of $n$ with $p$-core partitions of size at least $p + r$, so that the degrees of the corresponding irreducible characters of $\Alt_n$ are divisible by $p$. This will give a lower bound for the number $ n_p^*(G)$.

By \cite[Theorem 1]{GO}, for each $ j = 1, 2, \dots, p - 1 $, there exists a $p$-core partition $ \mu_j $ with size $ |\mu_j| = pj + r $. These will serve as cores for families of partitions of $n$ with positive $p$-weight.
For example, for $ j = 1 $, take $ \mu_1 = (p, 1^r) $, a hook partition of size $ p + r $.

We fix $j=1$, and construct partitions $ \alpha_k $ and $ \beta_k $ as follows:

For each $ 0 \leq k \leq  \lfloor ({n - 1 - 2r})/({2p})   \rfloor $, define the hook partition
$\alpha_k = (n - r - kp, 1^{r + kp}).$
This is a partition of $ n $, and it has $p$-core $ \mu_1 $ and positive $p$-weight.

For $ 1 \leq k \leq  \lfloor ({n - 1 - r})/({2p})   \rfloor $, define
$\beta_k = (n - r - pk, 1 + pk, 1^{r - 1}).$
Again, this has size $n$ and $p$-core $ \mu_1 $ with positive $p$-weight.

These partitions are all distinct and  for each $ \lambda \in \{ \alpha_k, \beta_k \} $, either $\lambda$ is self-conjugate or its conjugate $ \lambda' \notin \{ \alpha_k, \beta_k \} $, ensuring that the restrictions of the corresponding characters of $ \Sym_n $ to $ \Alt_n $ give distinct irreducible $p$-singular characters.

Now, for $ j = 2, \dots, p - 1 $, use the $p$-core partitions $ \mu_j $ of size $ pj + r $ to construct new partitions $ \gamma_j $ by adding $n-r-pj$ nodes to the first row of the Young diagram of $\mu_j.$
This yields a partition of $n$ with $p$-core $ \mu_j $ and positive $p$-weight.

Let $ S $ denote the set of all $ \alpha_k $, $ \beta_k $, and $ \gamma_j $ constructed above. Since each partition in $S$ yields one irreducible $p$-singular character of $ \Sym_n $, we obtain
$n_p^*(G) \geq |S|.$
We have
\[
|S| =  \lfloor \frac{n - 1 - 2r}{2p}   \rfloor + 1 +  \lfloor \frac{n - 1 - r}{2p}   \rfloor + (p - 2) > \frac{2n - 2 - 3r}{2p} + (p - 3) = \frac{n}{p} - \frac{3r + 2}{2p} + (p - 3).
\]

Since $ 1 \leq r \leq p - 1 $, we have:
\[
\frac{3r + 2}{2p} \leq \frac{3(p - 1) + 2}{2p} = \frac{3p - 1}{2p} < 2.
\]
Therefore,
$
n_p^*(G)\ge |S| \geq {n}/{p} + p - 5.
$
In particular, $ n_p^*(G) \ge {n}/{p} $.
We know from earlier that
\[
\de(P) \leq \lfloor \log_p(n) \rfloor \leq \log_p(n).
\]
Thus it suffices to show
$
\log_p(n) \leq {n}/{p}.
$
Let $ x = \log_p(n) $, so that $ n = p^x $. Since $n\ge p^2+r$, $x\ge 2.$ The inequality above becomes:
\[
x \leq p^{x - 1}, \quad \text{or} \quad f(x) := p^{x - 1} - x \geq 0.
\]

We now verify this inequality.  If $ x = 2 $, then $ f(2) = p - 2 \geq 3 $ since $ p \geq 5 $.
We compute $ f'(x) = p^{x - 1} \log p - 1 > 0 $ for all $ x \geq 2 $, so $ f(x) $ is increasing.
Hence, for all $ x \geq 2 $, $ f(x) \geq f(2) > 0 $, which implies the desired inequality:
$
\log_p(n) \leq {n}/{p}.
$
Therefore
\[
\de(P) \leq \log_p(n) \leq \frac{n}{p} < n_p^*(G),
\]
and the claim follows.

\medskip
(b) Now assume $ r = 0 $, i.e., $ p \mid n $. Let $ \mu := (p - 2, 2) $ be a $p$-core partition of size $ p $.
For $ 0 \leq k \leq {n}/{p} - 1 $, let
$\alpha_k = (n - 2 - pk, 2, 1^{pk}).$
For $ 1 \leq k \leq  \lfloor ({n - 4})/({2p})   \rfloor $, let
$\beta_k = (n - 2 - pk, 2 + pk).$
These are all partitions of $ n $ with $p$-core $ \mu $, and they are pairwise distinct and non-conjugate.
Since $n\ge p^2,$ $n-4\ge p^2-p=p(p-1)\ge 4p$,  and hence $$\lfloor (n-4)/(2p)\rfloor\ge (n-4)/(2p)-1\ge 2-1=1.$$ Thus
$
n_p^*(G) \geq {n}/{p} \geq \log_p(n) \geq \de(P),
$
as required.

\medskip
(2) Assume $p = 3$ and let $n \equiv s \pmod{3}$, where $s \in \{1, 2, 3\}$.

\smallskip
(a) Assume $s = 1$.  
Let $\mu_1 = (3,1)$, which is a $3$-core partition of $4$. For $1 \leq k \leq  \lfloor {(n-4)}/{3}   \rfloor$, define
\[
\alpha_k = (n - 1 - 3k, 1^{3k + 1}).
\]
Each $\alpha_k$ is not self-conjugate, and $\alpha_k'$ is not equal to any $\alpha_\ell$ in the same family. We also define
\[
\beta_k = (n - 1 - 3k, 3k + 1), \quad \text{for } 1 \leq k \leq  \lfloor (n-2)/{6}   \rfloor.
\]
The partitions $\alpha_k$ and $\beta_k$ are pairwise distinct, non-conjugate, and all have $3$-core $\mu_1$. Hence, each corresponds to a distinct $\Sym_n$-orbit of $3$-singular irreducible characters of $\Alt_n$, so
\[
n_3^*(G) \geq  \lfloor \frac{n-4}{3}   \rfloor + 1 +  \lfloor \frac{n-2}{6}   \rfloor \geq \frac{n-1}{3} + \frac{n-2}{6} - 1.
\]
Since $n \geq 24$, we obtain
$
n_3^*(G) \geq {n}/{3} + {(n - 10)}/{6} \geq {n}/{3} + 2.
$
Because $\de(P) \leq \log_3(n)$ and $\log_3(n) \leq {n}/{3} + 2$ for all $n \geq 24$, it follows that $\de(P) \leq n_3^*(G)$.

\smallskip
(b) Assume $s = 2$.  
Let $\mu_1 = (3,1^2)$ and $\mu_2 = (4,2,1^2)$, which are $3$-core partitions of $5$ and $8$, respectively. Define:
\[
\alpha_k = (n - 2 - 3k, 1^{2 + 3k}), \quad \text{for } 0 \leq k \leq  \lfloor ({n - 4})/{6}   \rfloor,
\]
\[
\beta_k = (n - 2 - 3k, 3k + 2, 1), \quad \text{for } 1 \leq k \leq  \lfloor ({n - 3})/{6}   \rfloor,
\]
\[
\gamma_l = (n - 4 - 3l, 2, 1^{2 + 3l}), \quad \text{for } 0 \leq l \leq  \lfloor ({n - 8})/{6}   \rfloor.
\]
These are distinct, non-conjugate partitions of $n$ with $3$-core $\mu_1$ or $\mu_2$. As  $n \geq 24$, we get
\[
n_3^*(G) \geq  \lfloor \frac{n-4}{6}   \rfloor + 1 +  \lfloor \frac{n-3}{6}   \rfloor + \lfloor \frac{n-8}{6}   \rfloor+1  \geq \frac{3n - 15}{6} -1=\frac{n}{3}+\frac{n-21}{6} \geq \frac{n}{3}.
\]
Again, since $\de(P) \leq \log_3(n) \leq {n}/{3}$ for $n \geq 24$, we have $\de(P) \leq n_3^*(G)$.

\smallskip
(c) Assume $s = 3$ (i.e., $3 \mid n$).  
Let $\mu = (4,2)$ be a $3$-core partition of $6$. Define:
\[
\alpha_k = (n - 2 - 3k, 2, 1^{3k}), \quad \text{for } 0 \leq k \leq  \lfloor \frac{n}{3}   \rfloor - 2,
\]
\[
\beta_k = (n - 2 - 3k, 2 + 3k), \quad \text{for } 1 \leq k \leq  \lfloor \frac{n - 4}{6}   \rfloor.
\]
Each $\alpha_k$ and $\beta_k$ is a partition of $n$ with $3$-core $\mu$. Hence,
\[
n_3^*(G) \geq  ( \frac{n}{3} - 2   ) + 1 +  \lfloor \frac{n - 4}{6}   \rfloor \geq \frac{n}{3} - 1 + \frac{n - 4}{6} - 1 = \frac{n}{3} + \frac{n - 10}{6}.
\]
Since $n \geq 24$, we have
$
n_3^*(G) \geq {n}/{3} + 1,
$
and since $\de(P) \leq \log_3(n) \leq {n}/{3} + 1$, it follows that $\de(P) \leq n_3^*(G)$ as desired.

\medskip

(3) Assume $p = 2$. We distinguish the cases where $n$ is odd or even.

\smallskip
(a) Assume that $n$ is odd. 

Let $\mu_1 = (2,1)$ and $\mu_2 = (5,4,3,2,1)$ be $2$-core partitions of sizes $3$ and $15$, respectively.

We construct irreducible characters of $\Sym_n$ labelled by partitions with $2$-core either $\mu_1$ or $\mu_2$, and show that they contribute to distinct $2$-singular irreducible characters of $\Sym_n$.

For each $k$ with $0 \leq k \leq  \lfloor ({n-3})/{4}   \rfloor - 1$, define
\[
\alpha_k := (n - 1 - 2k, 1^{2k + 1}).
\]
Each $\alpha_k$ is a partition of $n$, with $2$-core $\mu_1$ and $2$-weight equal to $k + 1$. These are not self-conjugate and are pairwise non-conjugate. Their conjugates do not appear among the $\alpha_k$'s.
Similarly, for $1 \leq k \leq  \lfloor ({n - 2})/{4}   \rfloor$, define
$
\beta_k := (n - 1 - 2k, 2k + 1).
$
Again, these are partitions of $n$ with $2$-core $\mu_1$, and they are distinct and non-conjugate to one another or to any $\alpha_k$.

Let $\mu_2 = (5,4,3,2,1)$ be a $2$-core partition of size $15$.
Define
\[
\gamma_l := (n - 10 - 2l, 4,3,2,1^{2l+1}), \text{ for } 0 \leq l \leq  \lfloor ({n - 15})/{4}   \rfloor
\]
and 
$$
\delta_l := (n - 10 - 2l, 2l + 4, 3, 2, 1), \text{ for } 1 \leq l \leq  \lfloor ({n - 14})/{4}   \rfloor.
$$
These are again pairwise distinct, non-self-conjugate, and non-conjugate partitions of $n$ with $2$-core $\mu_2$.

Each partition constructed above corresponds to one irreducible $2$-singular character of $\Sym_n$ such that its restriction to $\Alt_n$ is $2$-singular. So we have:
\[
\begin{aligned}
n_2^*(G) &\geq  \lfloor \frac{n-3}{4}   \rfloor +  \lfloor \frac{n-2}{4}   \rfloor +  \lfloor \frac{n-15}{4}   \rfloor + 1 +  \lfloor \frac{n-14}{4}   \rfloor \\
&\geq n-\frac{34}{4}-4+1\ge n - 12=\frac{n}{2}+\frac{n}{2}-12\ge \frac{n}{2}.
\end{aligned}
\]
Define $t := \log_2(n)\ge 4$. Then we can verify that $t\leq 2^{t-1}=n/2$. Thus

\[
\de(P) \leq \log_2(n) \leq \frac{n}{2}  \leq n_2^*(G),
\]
as required.

\smallskip
(b) Assume $n \geq 24$ is even.
Let $\mu_1 = (3,2,1)$ and $\mu_2 = (4,3,2,1)$ be $2$-core partitions of $6$ and $10$, respectively.
Define
\[
\alpha_k := (n - 3 - 2k, 2, 1^{2k + 1}), \quad \text{for } 0 \leq k \leq  \lfloor ({n - 6})/{4}   \rfloor,
\]
and
\[
\beta_k := (n - 3 - 2k, 2k + 2, 1), \quad \text{for } 1 \leq k \leq \lfloor ({n - 5})/{4} \rfloor.
\]
These are all pairwise distinct, non-conjugate partitions of $n$ with $2$-core $\mu_1$.
Similarly, define
\[
\gamma_l := (n - 6 - 2l, 3, 2, 1^{2l + 1}), \quad \text{for } 0 \leq l \leq \lfloor ({n - 10})/{4} \rfloor,
\]
and
\[
\delta_l := (n - 6 - 2l, 2l + 3, 2, 1), \quad \text{for } 1 \leq l \leq \lfloor ({n - 9})/{4}\rfloor.
\]

Again, these partitions are distinct and have $2$-core $\mu_2$.
We have
\[
n_2^*(G) \geq  \lfloor \frac{n - 6}{4}   \rfloor + 1 + \lfloor \frac{n - 5}{4} \rfloor +  \lfloor \frac{n - 10}{4}   \rfloor + 1 +  \lfloor \frac{n - 9}{4}   \rfloor 
\geq n - 10\geq \frac{n}{2}.
\]
As above, we have
$
\de(P) \leq \log_2(n) \leq {n}/{2},
$
for all $n \geq 24$.
Hence,
$\de(P) \leq n_2^*(G),$
as desired.

In all cases, we have obtained $ \de(P)\leq  n_p^*(G)$, and the proof is complete.
\end{proof}

\subsection{Exceptional groups of Lie type}

In the remaining subsections of this section, we consider the finite simple groups of Lie type.
In this subsection, let $G$ be a finite simple group of Lie type defined over a finite field of size $q=q_0^f$, where $q_0$ is a prime. There exists a simple algebraic group $\mathbf{G}$ of adjoint type over the algebraic closure of a finite field $\mathbb{F}_q$, and a Steinberg endomorphism $F: \mathbf{G}\rightarrow \mathbf{G}$, such that $G=[\mathbf{G}^F,\mathbf{G}^F]$. Let $W$ denote the Weyl  group of $\mathbf{G}$ with respect to some $F$-stable maximal torus $\mathbf{T}$ of $\mathbf{G}$.
Note that the order of the Weyl group $W$ can be read off from \cite[p. 43]{Carter2}.

For a positive integer $n$, we denote by $\Phi_n$ the $n$th cyclotomic polynomial evaluated at $q$, that is, $\Phi_n=\Phi_n(q)$. For example, $\Phi_1=q-1$, $\Phi_2=q+1$ and so on.

\begin{lem}\label{lem:nonabelian}
Assume that the Sylow $p$-subgroups of $G$ are nonabelian for some prime divisor $p$ of $|G|$.  Then either $p\mid q$ and $\mathbf{G}$ is not of type $A_1$, or $p$ divides the order of $W$.
\end{lem}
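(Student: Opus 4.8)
The plan is to prove the contrapositive, splitting into the defining-characteristic case $p\mid q$ and the non-defining case $p\nmid q$. Throughout I would work with $\mathbf{G}^F$ in place of $G$: a Sylow $p$-subgroup of $G=[\mathbf{G}^F,\mathbf{G}^F]$ lies inside a Sylow $p$-subgroup of $\mathbf{G}^F$, and when $p\mid q$ one has $p\nmid|\mathbf{G}^F:G|$, so in that case a Sylow $p$-subgroup of $G$ is already a Sylow $p$-subgroup of $\mathbf{G}^F$.

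Suppose first $p\mid q$. Then a Sylow $p$-subgroup of $\mathbf{G}^F$ is the fixed-point group $\mathbf{U}^F$ of the unipotent radical $\mathbf{U}$ of an $F$-stable Borel subgroup of $\mathbf{G}$; its elements are unipotent, so $\mathbf{U}^F\le G$, and by the reduction above it is a Sylow $p$-subgroup of $G$. If $\mathbf{G}$ has type $A_1$, then $\mathbf{U}$ is a single root subgroup, so $\mathbf{U}^F$ is elementary abelian of order $q$, contradicting the assumption that the Sylow $p$-subgroup is nonabelian; hence $\mathbf{G}$ is not of type $A_1$. (The Suzuki and Ree groups, where $\mathbf{G}$ has type $B_2$, $G_2$, or $F_4$ and $p=q_0$, are included in this case without difficulty.)

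Suppose next $p\nmid q$ but $p\nmid|W|$; I will show that the Sylow $p$-subgroups of $\mathbf{G}^F$, and hence of $G$, are abelian. Since the Weyl group of a simple algebraic group has even order, $p$ is odd. Set $d:=d_p(q)$, the multiplicative order of $q$ modulo $p$, and write $|\mathbf{G}^F|=q^{N}\prod_{m\ge1}\Phi_m(q)^{a_m}$, with $N$ the number of positive roots. For each $m$ the multiplicity $a_m$ is bounded by the number of fundamental degrees of $W$ divisible by $m$ (suitably interpreted in the twisted cases, where one allows $m\mid 2e$, $m\nmid e$); since $p$ is odd and $p\nmid|W|$, the product of these degrees, it follows that $a_m=0$ whenever $p\mid m$. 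Because $p\mid\Phi_m(q)$ only for $m\in\{d,dp,dp^2,\dots\}$, and $\Phi_d(q)_p=(q^d-1)_p$ (as $p\nmid\Phi_e(q)$ for the proper divisors $e$ of $d$), this yields $|\mathbf{G}^F|_p=\Phi_d(q)_p^{\,a_d}$. Now take $\mathbf{S}$ a Sylow $\Phi_d$-torus of $(\mathbf{G},F)$, so that $|\mathbf{S}^F|=\Phi_d(q)^{a_d}$, and let $\mathbf{T}\supseteq\mathbf{S}$ be a maximal torus; then $\mathbf{S}^F\le\mathbf{T}^F$ is an abelian subgroup of $\mathbf{G}^F$ with $|\mathbf{S}^F|_p=|\mathbf{G}^F|_p$, so a Sylow $p$-subgroup of $\mathbf{S}^F$ is a Sylow $p$-subgroup of $\mathbf{G}^F$ and is abelian. (Alternatively, one may simply quote generic Sylow theory: if $p\nmid q|W|$ then a Sylow $p$-subgroup of $\mathbf{G}^F$ is contained in a maximal torus.)

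The main obstacle is the bookkeeping in this last case for the twisted groups $\tw{2}A_n$, $\tw{2}D_n$, $\tw{3}D_4$, $\tw{2}E_6$ and, above all, for the very twisted Suzuki and Ree groups, where the order polynomial and the relevant $\Phi_d$-tori must be read off correctly in the twisted setting. For the Suzuki and Ree groups I would sidestep this entirely by observing that every Sylow $p$-subgroup with $p$ odd and $p\neq q_0$ is cyclic, being contained in a cyclic maximal torus, so that there the only nonabelian Sylow subgroup occurs at the defining prime — which was handled in the second paragraph.
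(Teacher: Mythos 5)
Your argument is correct in substance, but it takes a different route from the paper: the paper does not prove this lemma at all, it simply cites Lemma~3 of Malle--Moret\'o--Navarro \cite{MMN}, whereas you reprove the statement from scratch. Your two cases are exactly the right dichotomy: in defining characteristic the Sylow $p$-subgroup is $\mathbf{U}^F$ (and $p\nmid |\mathbf{G}^F:G|$ there, so no loss in passing to $\mathbf{G}^F$), which for type $A_1$ is a single root subgroup and hence elementary abelian; in non-defining characteristic with $p\nmid|W|$ (so $p$ odd, since $|W|$ is even), the order-polynomial/Sylow $\Phi_d$-torus computation, or equivalently the generic Sylow theorems of Brou\'e--Malle, places a Sylow $p$-subgroup inside a maximal torus, hence abelian. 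This buys a self-contained proof at the cost of the twisted-group bookkeeping you acknowledge; citing \cite{MMN} buys brevity, which is all the paper needs since the lemma is only used to narrow the list of primes in the exceptional-group analysis. One small inaccuracy in your side remark: for the Ree groups ${}^2\mathrm{F}_4(q^2)$ it is not true that every Sylow $p$-subgroup with $p$ odd, $p\neq q_0$, is cyclic --- e.g.\ a prime $p\geq 5$ dividing $q^2+1$ has Sylow $p$-subgroup of the form $C_{p^a}\times C_{p^a}$ inside a maximal torus of order $(q^2+1)^2$ --- but such subgroups are still abelian, contained in (not necessarily cyclic) maximal tori, so the conclusion you need survives; alternatively note that for these groups $p\nmid|W|$ forces $p\geq 5$ and quote the same generic Sylow theory rather than cyclicity.
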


\begin{proof}
This follows from Lemma 3 in \cite{MMN}.
\end{proof}

We now consider the finite simple exceptional groups of Lie type.

\begin{lem}\label{lem:exceptional} 
Let $G$ be a finite simple exceptional group of Lie type defined over a field of characteristic $q_0$. Let $p$ be a prime divisor of $|G|$ and let $P$ be a Sylow $p$-subgroup of $G$. Then $\de(P)\leq n_p^*(G).$ 
\end{lem}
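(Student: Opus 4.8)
The strategy is to reduce to a short, case-by-case analysis over the finite list of exceptional types, relying on the structural dichotomy provided by Lemma~\ref{lem:nonabelian}. First, if $P$ is abelian, then $\de(P)=1$, and since $G$ is simple and nonabelian, $P$ is not normal in $G$, so the It\^o--Michler theorem forces $|\cd_p(G)|\ge 1$, hence $n_p^*(G)\ge |\cd_p(G)|\ge 1=\de(P)$ and we are done. So we may assume $P$ is nonabelian. By Lemma~\ref{lem:nonabelian}, either $p=q_0$ (the defining characteristic) and $\mathbf{G}$ is not of type $A_1$, or $p$ divides $|W|$, the order of the Weyl group. The order $|W|$ for each exceptional type is small and explicitly listed in~\cite[p.~43]{Carter2}, so in the second case $p$ ranges over a very short explicit list ($p\le 7$ for $G_2, F_4, {}^2B_2, {}^2G_2, {}^3D_4$; $p\le 5$ for $E_6$-type; $p\le 7$ for $E_7, E_8$, with the largest relevant prime being $7$).

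**Defining characteristic case.** When $p=q_0$, a Sylow $p$-subgroup of $G$ is (conjugate to) the unipotent radical $U$ of a Borel subgroup, embedded in $\GL_n(q)$ for an appropriate $n$ depending only on the type. I would apply Lemma~\ref{lem3}: $\de(P)\le \lfloor\log_2 n\rfloor+1$ with $e=1$. This gives a fixed small numerical upper bound for $\de(P)$ in each of the finitely many types (e.g. $\de(P)\le 4$ or so even for $E_8$). Against this, I would produce a lower bound for $n_p^*(G)$: in defining characteristic, the Steinberg character has degree $q^{|\Phi^+|}$, and more generally the unipotent characters and the characters in Lusztig series of various semisimple classes give many characters of degree divisible by $p=q_0$; in fact the number of distinct $p$-singular degrees grows with $q$, and even for the smallest admissible $q=q_0$ it comfortably exceeds the constant bound on $\de(P)$. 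I would either cite generic character degree formulas (e.g. from~\cite{Carter2} or the unipotent character tables) or, for the genuinely small groups like ${}^2B_2(8)$, ${}^2G_2(27)$, $G_2(2)'\cong\PSU_3(3)$, fall back on explicit character-table data via GAP/MAGMA.

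**Non-defining characteristic case.** Here $p\mid |W|$ but $p\nmid q$, with $p$ in a short explicit list. The Sylow $p$-subgroup has a known structure (generalized Sylow subgroups associated to the $d$-torus, where $d=d_p(q)$), and typically $|P|$ is not too large, so Lemma~\ref{lem1} gives $\de(P)\le 1+\log_2\log_p|P|$, again a small constant in each case. For the lower bound on $n_p^*(G)$, I would count $p$-singular characters among the unipotent characters together with those in appropriate Lusztig series; using the parametrization by $d$-cuspidal pairs and the fact that $|\cd_p(G)|\le n_p^*(G)$, one checks the number of distinct such degrees beats the logarithmic bound. The main obstacle I anticipate is the handful of genuinely small cases — where both $\de(P)$ and $n_p^*(G)$ are single-digit and the generic estimates are too crude — such as $(G,p)$ with $G\in\{{}^2B_2(q),{}^2G_2(q),{}^2F_4(2)',G_2(q)\text{ small }q, {}^3D_4(q)\text{ small }q\}$; for those I would verify $\de(P)\le|\cd_p(G)|\le n_p^*(G)$ directly from the known character tables (GAP character table library / MAGMA), exactly as in the sporadic case (Lemma~\ref{lem:sporadic}). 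The Tits group ${}^2F_4(2)'$ is already covered by Lemma~\ref{lem:sporadic}.
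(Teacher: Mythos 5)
Your overall strategy matches the paper's: assume $P$ nonabelian, use Lemma~\ref{lem:nonabelian} to cut the primes down to $q_0$ and the small primes dividing $|W|$, bound $\de(P)$ logarithmically, and beat that bound by counting $p$-singular (mostly unipotent) character degrees, with computer checks for the genuinely small groups. Counting \emph{distinct degrees} (via $|\cd_p(G)|\le n_p^*(G)$) is a legitimate way to sidestep the $\Aut$-orbit issue; the paper does the same in the small-rank cases and otherwise invokes Malle's theorem that unipotent characters are $\Aut(G)$-invariant.

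However, there is a concrete flaw in your non-defining-characteristic step. You claim that ``typically $|P|$ is not too large, so Lemma~\ref{lem1} gives $\de(P)\le 1+\log_2\log_p|P|$, again a small constant in each case.'' This is not a constant: for a fixed exceptional type and a cross-characteristic prime $p$, the toral part of $P$ makes $\log_p|P|$ unbounded as $q$ varies (e.g.\ for $p=2$ and $q\equiv 1\pmod{2^m}$ with $m$ large, $\log_2|P|$ grows linearly in $m$), so the bound $1+\log_2\log_p|P|$ tends to infinity with $q$. Since your lower bound on $n_p^*(G)$ in this case is a fixed number of unipotent degrees (a constant depending only on the type), the comparison does not close uniformly in $q$ as written. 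The paper avoids this by bounding $\de(P)$ independently of $q$: it embeds $G$ into $\PGL_n(\overline{\bbF})$ with $n$ the minimal (projective) representation dimension from \cite[Prop.~5.4.13]{KL} and applies Lemma~\ref{lem2}, giving $\de(P)\le \lfloor\log_2 n\rfloor+1$ (e.g.\ $5$ for $\rF_4$ and $\rE_6$, $6$ for $\rE_7$, $8$ for $\rE_8$), valid for every prime $p$ at once. Replacing your Lemma~\ref{lem1} step by this device (which you already use, in the guise of Lemma~\ref{lem3}, in the defining-characteristic case) repairs the argument; as stated, though, the non-defining case is not proved. A minor additional caution: your estimate ``$\de(P)\le 4$ or so even for $\rE_8$'' in defining characteristic is optimistic — the dimension-based lemmas give $8$ for $\rE_8$ — but this does not matter since the number of distinct $q_0$-singular unipotent degrees comfortably exceeds it.
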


\begin{proof}

As in Lemma \ref{lem:Alternating}, we may assume $P$ is nonabelian, so $|P|\ge p^3.$

\medskip
(1) $G={}^2{\rm B}_2(q^2)$, where $q^2=2^{2m+1},m\ge 1.$ From \cite{Suzuki}, $G$ has order $q^4(q^2-1)(q^4+1)$ and only  Sylow $2$-subgroups of  $G$ are nonabelian. These  have derived length $2$. By \cite[13.9]{Carter}, $G$ has unipotent characters of degree $q^4$ and $\sqrt{2}q\Phi_1\Phi_2/2$, which are both even. Hence $$n_2^*(G)\ge|\cd_2(G)|\ge 2=\de(P).$$

\medskip
(2) $G={}^2{\rm G}_2(q^2)$, where $q^2=3^{2m+1},m\ge 1.$ From \cite{Ward} for example, $|G|=q^6(q^2-1)(q^6+1)$ and only Sylow $3$-subgroups of $G$ are nonabelian; these have derived length $2$. By \cite[13.9]{Carter}, $G$ has unipotent characters of degree \[q^6,\frac{1}{\sqrt{3}}q\Phi_1\Phi_2\Phi_4,\frac{1}{2\sqrt{3}}q\Phi_1\Phi_2\Phi_{12}',\frac{1}{2\sqrt{3}}q\Phi_1\Phi_2\Phi_{12}''\]
where $\Phi_{12}=\Phi_{12}'\Phi_{12}''$ and $\Phi_{12}'=q^2-\sqrt{3q}+1,\Phi_{12}''=q^2+\sqrt{3q}+1$. It follows that $$n_3^*(G)\ge |\cd_3(G)|\ge 4>\de(P).$$

\medskip
(3) $G={}^2{\rm F}_2(q^2)$, where $q^2=2^{2m+1},m\ge 1.$ By Lemma \ref{lem:nonabelian} and \cite[p. 43]{Carter2}, we only need to consider the primes $p=2$ or $3$. By Proposition 5.4.13 in \cite{KL}, $G$ embeds into $\PGL_{26}({\overline{\mathbb{F}}_2})$. By Lemma \ref{lem2}, we have $\de(P)\leq [\log_2(26)]+1=5$.

Assume first that $p=2$. By using only the degrees of unipotent characters of $G$ listed in \cite[13.9]{Carter}, we can check that $|\cd_2(G)|>5\ge \de(P)$. Now assume that $p=3$. We see that $3$ divides $\Phi_4=q^2+1$. By checking \cite[13.9]{Carter} again, we see that $G$ has more than $5 $ distinct degrees which are all divisible by $\Phi_4$ and the result follows.

\medskip
(4) $G={}^3{\rm D}_4(q)$, where $q^2=2^{2m+1},m\ge 1.$ By Lemma \ref{lem:nonabelian} and \cite{Carter2}, we only need to consider the primes $2,3$ and the characteristic $q_0$. By \cite[Prop. 5.4.13]{KL} and Lemma \ref{lem2}, $\de(P)\leq [\log_2(8)]+1=4.$

Assume $p=q_0$. By \cite[13.9]{Carter}, $G$ has $7$ unipotent characters with pairwise distinct degrees and they are all $p$-singular. The lemma follows.

Next, assume $p=2$ and $2\neq q_0. $ Then $q$ is odd and $p\mid \Phi_1$ and $p\mid\Phi_2.$ Again, among the unipotent characters of $G$, there are $4$ characters whose degrees are  $$q^3\Phi_2^2\Phi_{12}/2, q^3\Phi_2^2\Phi_{6}^2/2,q^3\Phi_1^2\Phi_{3}^2/2,q^3\Phi_1^2\Phi_{12}/2.$$ These degrees are all even and pairwise distinct. So $|\cd_2(G)|\ge 4\ge \de(P)$.

Assume $p=3$ and $3\nmid q$. First assume that $q\equiv 1$ mod $3$. Then $3\mid q-1=\Phi_1$. We can use \cite[Table 4.4]{DM} to find at least $4$ irreducible characters of $G$ whose degrees are divisible by $q-1$ and pairwise distinct. Thus $\cd_3(G)\ge 4\ge \de(P)$. The case $q\equiv -2$ mod $3$ can be argued similarly.

\medskip
(5) $G={\rm G}_2(q)$ with $q\ge 3.$  By \cite[Prop. 5.4.13]{KL},  $G$ embeds into $\PGL_6(\overline{\mathbb{F}}_2)$ if $q$ is even and into $\PGL_7(\overline{\mathbb{F}}_q)$ if $q$ is odd. By Lemma \ref{lem2}, $\de(P)\leq [\log_2(7)]+1=3.$ By Lemma \ref{lem:nonabelian}, we only need to consider the primes $2,3$ and $q_0$.

If $3\leq q\leq 4$, we can use MAGMA \cite{magma} to verify that $|\cd_p(G)|\ge \de(P)$. So, we assume that $q\ge 5.$ Consequently, all the nontrivial unipotent characters of $G$ are $q_0$-singular and hence $|\cd_{q_0}(G)|\ge 7>\de(P)$ and the lemma holds in this case.

Assume $p=2$ and $2\nmid q.$ Then $G$ has three distinct unipotent characters of degree divisible by $\Phi_1$ and their degrees are pairwise distinct. Thus the result follows as well. Finally assume that $p=3$ and $3\nmid q.$  If $3\mid q-1$, then $G$ has three distinct unipotent characters of degree divisible by $\Phi_1$ and if $3\mid q+1=\Phi_2$, then $3$ divides $\Phi_6=q^2-q+1$. In this case, we can find 3 different unipotent characters of distinct degree and they are divisible by either $\Phi_2$ or $\Phi_6.$

\medskip
For the remaining families of simple groups, by \cite[Theorem 2.5]{Malle} every unipotent character of $G$ is $\Aut(G)$-invariant. Thus each $p$-singular unipotent character of $G$ forms an $\Aut(G)$-orbit in $\Irr_p(G)$ and it suffices to count such characters using \cite[13.9]{Carter}.

\medskip
(6) $G={\rm F}_4(q)$ with $q\ge 2.$ By Lemma \ref{lem:nonabelian} and \cite{Carter2}, we need to consider the primes $p=2,3$ and  the defining characteristic $q_0$.
From  \cite[Prop. 5.4.13]{KL} and Lemma \ref{lem2}, we have $\de(P)\leq [\log_2(26)]+1=5.$ 

If $p=q_0$, then by inspecting \cite[13.9]{Carter} there exist at least $5$ unipotent characters of $G$ whose degrees are divisible by $p$. Hence $n_p^*(G)\ge 5\ge\de(P)$. 

Assume $p=2$ and $2\nmid q.$ Then all the cyclotomic polynomials $\Phi_1,\Phi_2,\Phi_4$ and $\Phi_8$ are even. Referring to the list of unipotent characters of $G$ in \cite{Carter} again, we can find at least $5$ distinct unipotent characters whose degrees are divisible by $2$. Thus $n_2^*(G)\ge \de(P)$. 

Assume $p=3$ and $3\nmid q.$ Then either $3\mid (q-1)$, so  $\Phi_1,\Phi_3$ are divisible by $3$; or if $3\mid (q+1)$, so  $\Phi_2, \Phi_6$ are divisible by $3$. In either case, from  \cite[13.9]{Carter}, at least five unipotent characters have degrees divisible by $3$ and the lemma follows.

\medskip
(7) $G={\rm E}_6(q)$ or ${}^2{\rm E}_6(q)$ with $q\ge 2.$ By Lemma \ref{lem:nonabelian} and \cite{Carter2}, we consider the primes $p=2,3,5$ and the defining characteristic $q_0$.
By \cite[Prop. 5.4.13]{KL} and Lemma \ref{lem2}, $\de(P)\leq [\log_2(27)]+1=5.$ 
We provide  a detailed argument for $G={\rm E}_6(q)$; the  case ${}^2{\rm E}_6(q)$ is similar.

First, if $p=q_0$, then all $29$ nontrivial unipotent characters of  $G$ are $p$-singular. Hence  $n_p^*(G)\ge 29>5\ge \de(P)$.

If $p=2$ and $2\nmid q$, then $\Phi_1,\Phi_2\Phi_4$ and $\Phi_8$ are divisible by $2$.  From \cite[13.9]{Carter}, we can find more than five unipotent characters whose degrees are divisible by one of these cyclotomic polynomials and hence are even.  The claim now follows.

If $p=3$ and $3\nmid q$, then  either  $\Phi_1,\Phi_3$ or  $\Phi_2,\Phi_6$ are divisible by $3$. Again \cite[13.9]{Carter} provides at  least $5$ unipotent characters whose degrees are divisible by $3$ as wanted.

If $p=5$ and $5\nmid q$, then by Fermat's Little theorem $5\mid q^4-1$. Since $q^4-1=\Phi_1\Phi_2\Phi_4$, $p$ divides $\Phi_1, \Phi_2$ or $\Phi_4.$ In each possibility, we can find at least $5$ unipotent characters of $G$ whose degree is divisible by this cyclotomic polynomial. 

\medskip
(8) $G={\rm E}_7(q)$ with $q\ge 2.$ By Lemma \ref{lem:nonabelian} and \cite{Carter2}, the relevant  primes are $2,3,5,7$ and $q_0$. From \cite[Prop. 5.4.13]{KL} and Lemma \ref{lem2}, $\de(P)\leq [\log_2(56)]+1=6.$ 

The argument is similar to the previous case, as an illustration, consider the case $p=7$. Note that we can assume $7\nmid q$. Thus $7$ divides $q^6-1=\Phi_1\Phi_2\Phi_3\Phi_6$. From \cite[13.9]{Carter}, each $\Phi_i$, for each $i\in \{1,2,3,6\}$, divides the degree of at least $6$  unipotent characters. Therefore $n_7^*(G)\ge 6\ge \de(P).$

\medskip
(9) $G={\rm E}_8(q)$ with $q\ge 2.$ As above, we consider $p=2,3,5,7$ and $q_0$.
By \cite[Prop. 5.4.13]{KL} and Lemma \ref{lem2}, $\de(P)\leq [\log_2(248)]+1=8.$  The argument is analogous to the previous cases and we will skip the proof.

Therefore, in all cases above, we obtain $\de(P) \leq n_p^*(G)$, completing the proof of the lemma.
\end{proof}

\subsection{Classical groups in non-defining characteristic}\label{s:classical}

We are left to consider the finite classical groups. We now write $G=\mathbf{G}^F$ for a group of Lie type, where $\bG$ is a connected reductive algebraic group and $F\colon \bG\rightarrow\bG$ is a Frobenius morphism defining an $\FF_q$-rational structure on $\bG$, where $q$ is a power of a prime $q_0$. We let $(\bG^\ast, F)$ be dual to $(\bG, F)$. The characters $\irr{G}$ are then partitioned into so-called Lusztig series $\mathcal{E}(G,s)$, indexed by $G^\ast$-conjugacy classes of semisimple elements $s$ of the dual group $G^\ast:=(\bG^\ast)^F$. The set $\mathcal{E}(G,s)$ is  in bijection with the set $\mathcal{E}(\cent_{G^\ast}(s), 1)$ of unipotent characters of $\cent_{G^\ast}(s)$, under what is known as a Jordan decomposition of characters. Here if $\chi\in\mathcal{E}(G,s)$ is mapped to $\psi\in\mathcal{E}(\cent_{G^\ast}(s), 1)$ under Jordan decomposition, then we have $\chi(1)=[G^\ast:\cent_{G^\ast}(s)]_{q_0'}\psi(1)$. (See e.g. \cite[Sec.~2.6]{GM20} for a discussion.)

We will also often use freely the following several facts. From \cite[Lem. 4.4]{NT}, if $s\in [G^\ast, G^\ast]$ and $|\zent(G)|=|G^\ast|/|[G^\ast, G^\ast]|$, then the characters in $\mathcal{E}(G,s)$ are trivial on $\zent(G)$. Dual to this, we have the linear characters of $G$ are in bijection with elements of $\zent(G^\ast)$, and $\mathcal{E}(G, sz)=\mathcal{E}(G, s)\otimes \hat{z}$, where $\hat{z}\in\Irr(G/[G,G])$ corresponds to $z\in \zent(G^\ast)$. (See e.g. \cite[Prop. ~11.4.12, Rem.~11.4.14]{DM20}).  Further, for $\varphi\in\Aut(G)$, $\mathcal{E}(G, s)$ is mapped under $\varphi$ to $\mathcal{E}(G, s^{\varphi^\ast})$ where $\varphi^\ast\in\Aut(G^\ast)$ is some dual automorphism  (see \cite[Cor.~2.4]{NTT}). 
Further, recall that unipotent characters are independent of isogeny type, so that they are trivial on $\zent(G)$ and restrict irreducibly to $[G, G]$. 

\begin{lem}\label{lem:GLunip}
 Let $G=\GL_{p^k}(\epsilon q)$ with $p^k\geq 5$ and $p\mid (q-\epsilon)$ a prime. Then $G$ has at least $k+1$ unipotent characters of distinct degrees divisible by $p$.
 \end{lem}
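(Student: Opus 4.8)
The plan is to combine the $q$-analogue of the hook length formula for the degrees of the unipotent characters of $G=\GL_{p^k}(\epsilon q)$ with the hypothesis $p\mid(q-\epsilon)$ to reduce the claim to a combinatorial statement about partitions of $n:=p^k$, and then exhibit $k+1$ suitable partitions by hand. Write $Q=\epsilon q$; the unipotent character of $G$ attached to $\lambda\vdash n$ has degree
\[
\chi^\lambda(1)=\Big|\,Q^{n(\lambda)}\,\frac{\prod_{i=1}^{n}(Q^{i}-1)}{\prod_{x\in[\lambda]}(Q^{h(x)}-1)}\,\Big|,\qquad n(\lambda)=\sum_{i\ge1}(i-1)\lambda_i,
\]
uniformly in $\epsilon$ (for $\epsilon=-1$ this is Ennola duality applied to the usual $q$-hook formula for $\GL_n$). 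Since $p\mid(q-\epsilon)$ we have $Q\equiv1\pmod p$, so $Q$ is a $p$-adic unit of order $1$ mod $p$. For $p$ odd, $v_p(Q^m-1)=v_p(Q-1)+v_p(m)$, and everything collapses to $v_p(\chi^\lambda(1))=v_p(n!)-\sum_{x}v_p(h(x))=v_p(f^\lambda)$, where $f^\lambda$ is the degree of the $\Sym_n$-irreducible labelled by $\lambda$. For $p=2$ the same computation gives $v_2(\chi^\lambda(1))=v_2(f^\lambda)+(v_2(Q+1)-1)(\lfloor n/2\rfloor-c_2(\lambda))$, where $c_2(\lambda)$ is the number of cells of $[\lambda]$ with even hook length; since $c_2(\lambda)\le\lfloor n/2\rfloor$ this yields $v_2(\chi^\lambda(1))\ge v_2(f^\lambda)$. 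In all cases \emph{$p\mid f^\lambda$ forces $p\mid\chi^\lambda(1)$}, so it suffices to produce $k+1$ partitions of $p^k$ with $p\mid f^\lambda$ whose degrees $\chi^\lambda(1)$ are pairwise distinct, and distinctness will be read off from the pair $\bigl(v_p(\chi^\lambda(1)),\,n(\lambda)\bigr)$ via the displayed formula.

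I would use two families. First, for $1\le j\le k-1$ put $\alpha_j=(p^k-p^j,\,p^j)$ and let $\beta_j=(2^{\,p^j},1^{\,p^k-2p^j})$ be its conjugate. A computation with the hook lengths of a two-row partition (equivalently Kummer's theorem for $v_p\binom{p^k}{p^j}$) gives $v_p(f^{\alpha_j})=v_p(f^{\beta_j})=k-j\ge1$; moreover $\alpha_j$ and $\beta_j$ have empty $2$-core, so when $p=2$ the correction term above vanishes and one still has $v_2(\chi^{\alpha_j}(1))=v_2(\chi^{\beta_j}(1))=k-j$. These $2(k-1)$ characters have pairwise distinct degrees: the value $v_p(\chi(1))=k-j$ separates distinct $j$, and for fixed $j$ one has $n(\alpha_j)=p^j\ne\binom{p^j}{2}+\binom{p^k-p^j}{2}=n(\beta_j)$, so the exponents of the prefactor $Q^{n(\lambda)}$ differ and $\chi^{\alpha_j}(1)\ne\chi^{\beta_j}(1)$. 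Second, if $\mu$ is a $p$-core with $0<|\mu|\le p^k$ and $p\mid|\mu|$, let $\lambda_\mu=(\mu_1+(p^k-|\mu|),\,\mu_2,\,\mu_3,\dots)$ be $\mu$ with all $(p^k-|\mu|)/p$ of its $p$-hooks appended to the first row. A $p$-core has no hook of length divisible by $p$, so the hook multiset of $\lambda_\mu$ consists of the prime-to-$p$ hooks of $\mu$, prime-to-$p$ numbers, and $\{1,\dots,p^k-|\mu|\}$; hence $\sum_{x\in[\lambda_\mu]}v_p(h(x))=v_p\bigl((p^k-|\mu|)!\bigr)$ and $v_p(f^{\lambda_\mu})=k+v_p\bigl((|\mu|-1)!\bigr)\ge k$.

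Now assemble $k+1$ characters by cases on $k$. If $k\ge3$ then $2(k-1)\ge k+1$, and the characters $\chi^{\alpha_j},\chi^{\beta_j}$ ($1\le j\le k-1$) already suffice; this covers every case with $p=2$, since $p^k\ge5$ forces $k\ge3$ there. If $k=2$ (so $p$ is odd), take $\chi^{\alpha_1},\chi^{\beta_1}$ (two distinct degrees with $v_p=1$) together with $\chi^{\lambda_\mu}$ for $\mu$ a $p$-core of size $p$ when $p\ge5$, or $\mu=(4,2)$ (a $3$-core of size $6$) when $p=3$; the last has $v_p(\chi^{\lambda_\mu}(1))\ge2>1$, so it is a third distinct degree. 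Finally if $k=1$ (so $p\ge5$), the partitions $(p-2,2)$ and its conjugate $(2,2,1^{\,p-4})$ are both $p$-cores of size $p$, so $v_p(\chi(1))=v_p(f)=v_p(p!)=1$ for both, while $n\bigl((p-2,2)\bigr)=2\ne1+\binom{p-2}{2}=n\bigl((2,2,1^{\,p-4})\bigr)$, so their degrees are distinct. In every case we have produced $k+1$ unipotent characters of $G$ of pairwise distinct degrees, each divisible by $p$.

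The soft part is the valuation computation of Step 1; the genuine difficulty is concentrated in the extremal small cases $k\le2$, where the bound $k+1$ is attained and one must know that sufficiently small $p$-cores exist — e.g. $(p-2,2)$ of size $p$ for $p\ge5$ and $(4,2)$ of size $6$ for $p=3$, there being no $p$-core of size $p$ when $p\in\{2,3\}$. The case $p=2$ also forces one to notice that the correction term in the valuation formula vanishes precisely on the two-row partitions and their conjugates, which is why those are the right partitions there. The remaining point requiring care is to guarantee that the degrees are genuinely distinct, rather than merely all divisible by $p$; this is handled uniformly by tracking $v_p(\chi^\lambda(1))$ together with $n(\lambda)$.
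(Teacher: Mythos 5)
Your argument is correct, and it shares the paper's basic skeleton: both proofs start from the $q$-analogue of the hook formula for the unipotent degrees of $\GL_{p^k}(\epsilon q)$, exhibit explicit families of partitions, and separate degrees by the power of $q$ (the prefactor $q^{n(\lambda)}$). The execution differs in two genuine ways. First, you make the divisibility check systematic: by lifting-the-exponent you reduce $p\mid \chi^\lambda(1)$ to $p\mid f^\lambda$ (with the exact identity $v_p(\chi^\lambda(1))=v_p(f^\lambda)$ for $p$ odd, and exactness for $p=2$ precisely when the $2$-core is empty), after which Kummer's theorem gives $v_p(f^{(p^k-p^j,\,p^j)})=k-j$; the paper instead argues directly with which cyclotomic factors $\Phi_d$ divisible by $p$ survive in the hook quotient. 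Your valuation bookkeeping also lets you separate characters across different $j$ by $v_p$ alone, using $n(\lambda)$ only within a conjugate pair. Second, the choice of partitions differs: the paper's family $(p^k-p^j-1,\,p^j+1)$, $0\le j<k$, together with $(p^k-3,2,1)$ (and for $p=2$ additionally $(2^k-4,2,1,1)$) produces $k+1$ characters uniformly for every $k\ge 1$, with only a small fix at $p=3$ where two degrees share the same $q$-power; your family $(p^k-p^j,\,p^j)$ loses the $j=0$ member (its symmetric-group degree $p^k-1$ is prime to $p$), so you compensate with conjugates, which forces the separate treatment of $k\le 2$ via $p$-core constructions such as $(p-2,2)$ and $(4,2)$. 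Those small cases are handled correctly (in particular your identity $\sum_x v_p(h(x))=v_p\bigl((p^k-|\mu|)!\bigr)$ for a $p$-core $\mu$ with a long first row attached is a standard abacus/$p$-quotient fact, and your description of the hook multiset checks out), so the trade-off is simply: a cleaner, reusable divisibility criterion on your side versus a single uniform family with no case split on $k$ in the paper.
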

\begin{proof}
The unipotent characters of $G$ are indexed by partitions $\lambda=(\lambda_1, \lambda_2,\ldots,\lambda_r)$ of $n$, where $\lambda_1\geq \lambda_2\geq\cdots\geq \lambda_r$. The degree of the character $\chi_\la$ corresponding to $\la$ is given by a hook formula: 
\[\chi_\lambda(1)=q^{n(\la)}\prod_{i=1}^n\frac{q^i-(\epsilon)^i}{q^{l_i}-(\eps)^{l_i}}\] where $l_i$ is the length of the hook at the $i$th box of the corresponding Young diagram and $n(\la):=\sum_{i=1}^r(i-1)\lambda_i$. (See \cite[Props.~4.3.1, 4.3.5]{GM20}.) We also recall that $q^m-1$ is a product of the cyclotomic polynomials $\Phi_d:=\Phi_d(q)$ such that $d\mid m$; that $\Phi_d$ is divisible by $p$ if and only if $d=d_p(q)p^t$ for some $t\geq 0$ (where $d_p(q)$ is the order of $q$ modulo $p$ if $p$ is odd, resp. the order of $q$ modulo $4$ if $p=2$); and that $p\mid\mid \Phi_d$ if $t>0$.

If $p$ is odd, consider the partitions $(p^k-p^j-1, p^j+1)$ for $0\leq j<k$ and $(p^k-3, 2, 1)$. From the degree formula and discussion of divisibility for $\Phi_d$ above, we see these each have degree divisible by $p$. Further, the degrees have distinct powers of $q$ dividing them, except possibly the case $p=3$, where the degrees of the characters corresponding to $(p^k-4, 4)$ and $(p^k-3, 2, 1)$ have the same power of $q$. In that case, however, we can see that the $q_0'$-part of the degrees are different.

If $p=2$ (so $p^k\geq 8$), consider the partitions $( 2^k-2^j-1, 2^j+1)$ for $0\leq j<k-1$, $(2^k-3, 2, 1)$, and $(2^k-4, 2, 1, 1)$. We similarly note that the $q_0$-parts of the degrees are distinct, and that these degrees are all even.
\end{proof}

\begin{rem}\label{rem:GLevenunip}
 When $p=2$, to make up for the condition $2^k\geq 8$ in Lemma \ref{lem:GLunip} (note that $\GL_4(\eps q)$ has only one unipotent character of even degree), in what follows it will be useful to note that if $G=\GL_{n}(\epsilon q)$ with $5\leq n\leq 7$, $\Sp_6(q)$, $\SO_{7}(q)$, or $\SO^+_8(q)$ and $q$ is a power of an odd prime, then $G$ has at least $3$ unipotent characters of distinct, even degrees. In fact, in the latter three cases, there are at least $4$ unipotent characters of distinct even degrees. This can be seen directly from the list of unipotent character degrees.
 \end{rem}
 
\begin{prop}\label{prop:classical}
Let $G$ be $\GL_n(\epsilon q)$ with $n\geq 2$, $\Sp_{2n}(q)$ with $n\geq 2$, $\SO_{2n+1}(q)$ with $n\geq 3$, or $\SO^\pm_{2n}(q)$ with $n\geq 4$, and let $p$  be an odd  prime not dividing $q$. Let $Q\in\Syl_p(G)$ and assume that $Q$ is nonabelian. Then $G$ has at least $\mathrm{dl}(Q)$ characters in $\Irr_p(G)$ with distinct degrees that are trivial on $\zent(G)$. In the case $G=\GL_n(\epsilon q)$, these further restrict irreducibly to $\SL_n(\epsilon q)$. 
\end{prop}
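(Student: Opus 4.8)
The strategy is to reduce the assertion to exhibiting, for each group $G$ in the list, exactly $\mathrm{dl}(Q)$ characters in $\Irr_p(G)$ of pairwise distinct degrees that are trivial on $\zent(G)$ (and that restrict irreducibly to $\SL_n(\epsilon q)$ when $G=\GL_n(\epsilon q)$), and then to construct these characters by Jordan decomposition, with Lemma~\ref{lem:GLunip} as the source of enough $p$-singular degrees. First I would pin down $\mathrm{dl}(Q)$. By the descriptions of Sylow $p$-subgroups of the classical groups recalled before Lemmas~\ref{lem:dlGL} and~\ref{lem:dlclassical2}, there is a positive integer $w$---the number of ``$\GL$-blocks'' of the relevant Sylow $p$-subgroup, where $e:=d_p(\epsilon q)$ (resp.\ $d_p(q)$)---with $p$-adic expansion $w=a_0+\dots+a_kp^k$, $a_k>0$, such that $\mathrm{dl}(Q)=k+1$ (by Lemma~\ref{lem:dlGL} together with the reductions in the discussion preceding it). Since $Q$ is nonabelian we have $k\ge 1$, and $w\ge p^k$; so it suffices to produce $k+1$ such characters.

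The construction goes as follows. In the dual group $G^\ast$ I would choose a semisimple element $s$ of $p$-power order whose centralizer $\cent_{G^\ast}(s)$ has a direct factor $L$ isomorphic to a general linear or general unitary group $\GL^{\pm}_{p^k}(q^{e'})$ over an extension field, of the precise type (linear or unitary, and field degree $e'\mid e$) for which $p$ divides $q^{e'}-1$, resp.\ $q^{e'}+1$, so that Lemma~\ref{lem:GLunip} applies to $L$; the complementary factor $M$ is a product of smaller classical groups and tori. Such an $s$ always exists: in $\GL_n(\epsilon q)$ take a block $\mu\,I_{p^k}$ supported on the Galois orbit of an element $\mu$ of suitable $p$-power order, padding with an identity block when $n>ep^k$; in $\Sp_{2n}(q)$, $\SO_{2n+1}(q)$ and $\SO^{\pm}_{2n}(q)$ take a Galois orbit of the appropriate (possibly self-dual) shape with multiplicity $p^k$, and let $s$ be the identity on the remaining orthogonal summand. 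For each of the $\ge k+1$ unipotent characters $\psi$ of $L$ furnished by Lemma~\ref{lem:GLunip}, of pairwise distinct $p$-divisible degrees, let $\chi_\psi\in\mathcal{E}(G,s)$ be the Jordan correspondent of $\psi\boxtimes 1_M$; then $\chi_\psi(1)=[G^\ast:\cent_{G^\ast}(s)]_{q_0'}\,\psi(1)$, so the $\chi_\psi$ have pairwise distinct degrees, all divisible by $p$. (When $\cent_{G^\ast}(s)$ contains a full Sylow $p$-subgroup of $G^\ast$---which happens exactly when $w=p^k$---one genuinely uses the $p$-divisibility of the degrees in Lemma~\ref{lem:GLunip}; otherwise $p\mid[G^\ast:\cent_{G^\ast}(s)]_{q_0'}$, and then any $k+1$ unipotent characters of $L$ of distinct degrees already work.)

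It remains to check the conditions on $\zent(G)$ and---in the $\GL$ case---on $\SL_n(\epsilon q)$. Arranging $\det s=1$ when $G=\GL_n(\epsilon q)$, and using that $s$ has odd order with $[G^\ast,G^\ast]$ of index $|\zent(G)|$ in $G^\ast$ in the other cases, we get $s\in[G^\ast,G^\ast]$, so \cite[Lem.~4.4]{NT} shows each $\chi_\psi$ is trivial on $\zent(G)$. For $\GL_n(\epsilon q)$, if $\mu$ is chosen so that the $G^\ast$-conjugacy class of $s$ is stable under multiplication by no nontrivial $z\in\zent(G^\ast)$ (there is ample freedom for this), then $\mathcal{E}(G,sz)\ne\mathcal{E}(G,s)$ for all such $z$, hence $\chi_\psi\otimes\hat z\ne\chi_\psi$, hence $\chi_\psi|_{\SL_n(\epsilon q)}$ is irreducible. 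The one case not covered by Lemma~\ref{lem:GLunip} is $p^k=3$ (so $p=3$, $k=1$, $\mathrm{dl}(Q)=2$), where $\GL^{\pm}_3(q^{e})$ has no unipotent character of degree divisible by $3$; there I would instead choose $s$ with a centralizer factor $\GL^{\pm}_2$ over a suitable extension field and with $3\mid[G^\ast:\cent_{G^\ast}(s)]_{q_0'}$, so that the trivial and Steinberg characters of this $\GL^{\pm}_2$-factor yield two characters of $G$ of distinct, $3$-divisible degrees with the same centre and restriction properties. (The smallest-rank configurations in which $Q$ is nonabelian are all of this form or are checked directly, as in Remark~\ref{rem:GLevenunip}.)

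The main difficulty is not conceptual but organizational: verifying uniformly---and most delicately for the unitary groups and the orthogonal groups---that a semisimple $s$ exists whose centralizer contains a factor of exactly the required linear or unitary type over an extension field with $p\mid q^{e'}\mp1$, that in the orthogonal cases the leftover $\pm1$-eigenspace is an admissible orthogonal space of the correct dimension, and that the determinant-one and genericity requirements can be imposed simultaneously with the divisibility requirements, so that the resulting characters are at once $p$-singular, trivial on $\zent(G)$, and (for $\GL$) irreducible upon restriction to $\SL_n(\epsilon q)$.
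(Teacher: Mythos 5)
Your proposal is correct and follows essentially the same route as the paper's proof: a semisimple $s\in G^\ast$ whose centralizer has a direct factor $\GL_{p^k}(\eta q^{e})$ with $p\mid q^{e}-\eta$, Lemma~\ref{lem:GLunip} supplying $k+1=\mathrm{dl}(Q)$ unipotent characters of distinct $p$-divisible degrees, Jordan decomposition transferring them to $\mathcal{E}(G,s)\cap\Irr_p(G)$, and the facts $s\in[G^\ast,G^\ast]$ and $s\not\sim_{G^\ast}sz$ for $1\neq z\in\zent(G^\ast)$ giving triviality on $\zent(G)$ and irreducibility on restriction to $\SL_n(\epsilon q)$. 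The only deviations are minor: for the exceptional case $p^k=3$ the paper checks the small-rank groups directly and otherwise uses a $\GL_6(\eta q^{e})$ centralizer factor (your $\GL_2$-factor with $3$ dividing the prime-to-$q_0$ part of the index also works once the eigenvalue configuration is chosen suitably), and your parenthetical that the centralizer contains a full Sylow $p$-subgroup exactly when $w=p^k$ is inaccurate but harmless, since your dichotomy covers both alternatives.
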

\begin{proof}
 
Recall that in the orthogonal and symplectic cases, there is some $m$ such that $Q$ is isomorphic to a Sylow $p$-subgroup of $\GL_m(q)$.  In the case $\GL_n(\epsilon q)$, let $m:=n$, so that in all cases,  $m$ is such that $Q$ is a Sylow $p$-subgroup of some $\GL_m(\epsilon q)$. Let $d':=d_p(q)$ unless $G=\GL_n(-q)$, in which case we let $d':=d_p(-q)$. 

Write $m=d'w+r$ with $0\leq r<d'$,  and let $w=\sum_{i=0}^k a_ip^i$ be the $p$-adic expansion of $w$ as before, where $a_k\neq 0$. Note that $k\neq 0$ by our assumption that $Q$ is nonabelian.  Let $\zeta\in\FF_{q^{2d'}}^\times$ have order $p$. Then there is a semisimple element $s'$ of $\GL_m(\epsilon q)$ whose nontrivial eigenvalues are the orbit of $\zeta$ under $x\mapsto x^{\epsilon q}$, each with multiplicity $p^k$. This in turn yields a semisimple element $s$ of 
$G^\ast\cong \GL_{n}(\epsilon q), \SO_{2n+1}(q)$, $\Sp_{2n}(q)$,  resp. $\SO^\pm_{2n}(q)$ whose  centralizer is of the form $\cent_{G^\ast}(s)\cong \GL_{p^k}(\eta q^{e})\times X$, where $X$ is some classical group, $e\in\{d', d'/2\}$, $\eta\in\{\pm1\}$, and $p\mid (q^{e}-\eta)$. (Here $e=d'$ if $G=\GL_n(\epsilon q)$ and $e=d_p(q^2)$ in the other cases.)

Here, note that $s$ lies in $[G^\ast, G^\ast]$, so the characters in $\mathcal{E}(G,s)$ are trivial on the center. Further, in the case $G=\GL_n(\epsilon q)$, we see from the multiplicities of eigenvalues that $s$ is not $G^\ast$-conjugate to any $sz$ for $1\neq z\in\zent(G^\ast)$. From this we see that the characters in $\mathcal{E}(G, s)$ must restrict irreducibly to $\SL_n(\epsilon q)$. 

First assume that $p^k\geq 5$. Then  by Lemma \ref{lem:GLunip}, we have $\mathcal{E}(\cent_{G^\ast}(s), 1)$ contains at least $k+1$ characters of distinct degrees divisible by $p$ of the form $\psi\otimes 1_X$ with $\psi\in\Irr_p(\GL_{p^k}(\eta q^{e}))$. By the degree properties of Jordan decomposition, this yields at least $k+1=\mathrm{dl}(Q)$ characters of $\mathcal{E}(G,s)\cap \Irr_p(G)$ of distinct degrees, completing the statement in this case.

Now let $p^k=3$. Then $\mathrm{dl}(Q)=2$, $w=3a_1+a_0$ and $d'\in\{1,2\}$. If $G$ is one of $\GL_3(\epsilon q)$, $\GL_4(\eps q)$, $\GL_5(\eps q)$, $\GL_6(\epsilon q)$, $\Sp_4(q)$, $\Sp_6(q)$, $\SO_5(q)$, $\SO_7(q)$, or $\SO_{8}^\pm(q)$, the statement can be checked directly using the known character tables and/or unipotent character degrees. In particular, $\GL_6(\eps q)$ has at least two distinct unipotent character degrees divisible by $3$. For relevant larger values of $n$, we may then argue just as above, taking $s'$ instead to have the orbit of $\zeta$ as eigenvalues with multiplicity $6$.
\end{proof}

\begin{prop}\label{prop:classical2}
Let $q$ be odd and let $G$ be $\GL_n(\epsilon q)$ with $n\geq 2$, $\Sp_{2n}(q)$ with $n\geq 2$, $\SO_{2n+1}(q)$ with $n\geq 3$, or $\SO^\pm_{2n}(q)$ with $n\geq 4$, and let $p=2$. Let $Q\in\Syl_2(G)$. Then $G$ has at least $\mathrm{dl}(Q)$ characters $\chi_1,\ldots,\chi_{\mathrm{dl}(Q)}$ in $\Irr_2(G)$ that are trivial on the center and restrict irreducibly to $[G,G]$. These further satisfy that $\chi_i^\sigma\neq \chi_j\beta$ for any $i\neq j$, $\beta\in\Irr(G/[G,G])$, and $\sigma\in\Aut(G)$. 
\end{prop}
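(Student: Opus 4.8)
The plan is to adapt the argument of Proposition~\ref{prop:classical} to the prime $2$, using Lemma~\ref{lem:GLunip} and Remark~\ref{rem:GLevenunip} (already stated for $p=2$) in place of their odd counterparts, and adding one observation that disposes of the $\Aut(G)$-orbit condition almost for free. The observation is this: it suffices to produce $\mathrm{dl}(Q)$ characters lying in a single Lusztig series $\mathcal{E}(G,s)$, all in $\Irr_2(G)$, all trivial on $\zent(G)$, all restricting irreducibly to $[G,G]$, and of \emph{pairwise distinct degrees}. Indeed, if $\chi_i,\chi_j\in\mathcal{E}(G,s)$ with $i\ne j$ and $\chi_i^\sigma=\chi_j\beta$ for some $\sigma\in\Aut(G)$ and $\beta\in\Irr(G/[G,G])$, then comparing degrees --- automorphisms preserve character degrees and $\beta(1)=1$ --- gives $\chi_i(1)=\chi_j(1)$; writing $\chi_\ell(1)=[G^\ast:\cent_{G^\ast}(s)]_{q_0'}\psi_\ell(1)$ for the Jordan correspondents $\psi_\ell$, the common index factor forces $\psi_i(1)=\psi_j(1)$, contradicting the distinctness of the $\chi_\ell(1)$. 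So the orbit inequality is not where the work lies; the content is the construction of the characters.

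\emph{The general linear and unitary case.} Writing $\mathrm{dl}(Q)=k+1$, Lemma~\ref{lem:dlGL} gives $2^k\le n<2^{k+1}$. I would take $s=\diag(-1^{(2^k)},1^{(n-2^k)})\in G^\ast\cong\GL_n(\epsilon q)$, so that $\cent_{G^\ast}(s)\cong\GL_{2^k}(\epsilon q)\times\GL_{n-2^k}(\epsilon q)$. Since $\det s=1$ we have $s\in\SL_n(\epsilon q)=[G^\ast,G^\ast]$, and as $|\zent(G)|=q-\epsilon=|G^\ast:[G^\ast,G^\ast]|$, \cite[Lem.~4.4]{NT} shows that the characters in $\mathcal{E}(G,s)$ are trivial on $\zent(G)$. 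Because $n\ne 2^{k+1}$, comparing eigenvalue multiplicities shows $s$ is not $G^\ast$-conjugate to $sz$ for any $1\ne z\in\zent(G^\ast)$, so these characters restrict irreducibly to $\SL_n(\epsilon q)$. When $k\ge 3$ (so $2^k\ge 8$ and $2\mid q-\epsilon$), Lemma~\ref{lem:GLunip} applied to $\GL_{2^k}(\epsilon q)$ produces $k+1$ unipotent characters of distinct even degrees; tensoring with $1_{\GL_{n-2^k}(\epsilon q)}$ and passing through Jordan decomposition yields the required $\mathrm{dl}(Q)$ characters of distinct degrees in $\mathcal{E}(G,s)\cap\Irr_2(G)$. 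The cases $k\le 2$, i.e.\ $n\in\{2,\dots,7\}$, are verified directly from the known character tables and unipotent-degree lists, with Remark~\ref{rem:GLevenunip} handling $5\le n\le 7$.

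\emph{The symplectic and orthogonal case.} The new wrinkle is that, by Lemma~\ref{lem:dlclassical2}, the Sylow $2$-subgroup has derived length $k+1$ with $2^k\le 2n<2^{k+1}$, which is \emph{one more} than the derived length of the Sylow $2$-subgroup of the largest $\GL$-type Levi that fits; so a single $\GL$-factor in a centralizer falls one character short. To remedy this I would choose a semisimple $s\in G^\ast$ whose centralizer is $\cong\GL_a(\eta q)\times H$ with $\eta\in\{\pm1\}$, $H$ a classical group of the same type of smaller rank, and $a$ as large as the ambient dimension permits (using an eigenvalue of order $4$, of type $L$ if $q\equiv1\pmod 4$ and of type $U$ if $q\equiv3\pmod4$). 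Fixing a unipotent character $\phi_0$ of $H$ of even degree --- available once $H$ has rank at least $2$, e.g.\ by Remark~\ref{rem:GLevenunip} or inspection of unipotent degrees --- and pairing it with the $a$ hook-partition unipotent characters $\psi$ of $\GL_a$ (which have pairwise distinct degrees), the characters $\psi\otimes\phi_0$ of $\cent_{G^\ast}(s)$ have pairwise distinct even degrees, and Jordan decomposition produces $a\ge\mathrm{dl}(Q)$ characters of distinct degrees in $\mathcal{E}(G,s)\cap\Irr_2(G)$ whenever $a$ is large enough. One must still check that $s\in[G^\ast,G^\ast]$ --- a spinor-norm calculation --- so that these characters are trivial on $\zent(G)$, and, in the orthogonal cases, that $s$ is not conjugate to $sz$ for the nontrivial $z\in\zent(G^\ast)$ so that they restrict irreducibly to $[G,G]$, with the usual modifications for the disconnectedness of $\cent_{G^\ast}(s)$. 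The bounded collection of low-rank groups not covered this way --- $\Sp_4(q),\Sp_6(q),\SO_7(q),\SO^\pm_8(q)$ and a handful more --- are dealt with directly.

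\emph{Expected main obstacle.} The orbit condition $\chi_i^\sigma\ne\chi_j\beta$ is not the hard part; it follows at once from the degree comparison above. The real difficulty is twofold: in the symplectic and orthogonal cases one must compensate for the ``missing'' character forced by the derived-length gap while simultaneously keeping $s$ inside the derived subgroup of the dual group (so the characters descend to the simple quotient) and controlling the restriction to $[G,G]$ through spinor-norm computations and disconnected centralizers; and one must carefully enumerate and settle the numerous small-rank exceptions where no semisimple element with a large enough $\GL$-factor is available and explicit character-theoretic data is needed.
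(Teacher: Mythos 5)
Your proposal is essentially correct and, in the linear/unitary case, coincides with the paper's argument (same element $s$ with eigenvalue $-1$ of multiplicity $2^k$, same use of Lemma~\ref{lem:GLunip} and Remark~\ref{rem:GLevenunip}, same small-case check including the extra series needed for $\GL_4(\eps q)$, where the unipotent degrees alone give only one even degree). Where you genuinely diverge is in the symplectic/orthogonal case for large rank. The paper compensates for the derived-length gap in two stages: for moderate $k$ it uses a single $s$ with a $\Sp_6(q)$ or $\SO_7(q)$ factor and tensors its four even unipotent degrees with $1$ and the Steinberg character of the complement (eight distinct even degrees), and for $k$ large it uses \emph{two} Lusztig series $\mathcal{E}(G,s_1)$, $\mathcal{E}(G,s_2)$ with $\GL_{2^{k-2}}$- and $\GL_{2^{k-3}}$-factors, which forces an extra cross-series verification that $s_1^\sigma$ is never conjugate to $s_2z$. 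You instead stay in one series with centralizer $\GL_a(\eta q)\times H$, pair the $a$ hook unipotent characters of the $\GL_a$-factor with one fixed even-degree unipotent character of $H$, and exploit the clean observation that pairwise distinct degrees inside a single series make the condition $\chi_i^\sigma\neq\chi_j\beta$ automatic. That observation is valid and removes the cross-series bookkeeping entirely; the price is a larger (but still finite) list of low-rank exceptions where $a\geq\de(Q)$ cannot be arranged with $H$ of sufficient rank (e.g.\ $\Sp_8(q)$, $\Sp_{10}(q)$, $\SO_9(q)$, $\SO_{11}(q)$, $\SO_{10}^\pm(q)$, $\SO_{12}^\pm(q)$ in addition to the ones you list), each of which does go through by inspecting unipotent degrees or a small auxiliary series, as in the paper's own small-case treatments.

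Two points need tightening before your plan is airtight. First, ``an even-degree unipotent character of $H$ exists once $H$ has rank at least $2$'' fails in type $\type{D}_2$ (all unipotent degrees of $\SO_4^+(q)$ are odd for odd $q$), so in the even-orthogonal case you must insist on $H$ of rank at least $3$, which slightly enlarges the exceptional list but changes nothing structurally. Second, the membership $s\in[G^\ast,G^\ast]$, which you flag as ``a spinor-norm calculation,'' is a genuine constraint: with $\zeta_4$-eigenvalues of multiplicity $a$ the spinor norm is the class of $\zeta_4^{a}$ (resp.\ the analogous norm in the unitary embedding), which is nontrivial for odd $a$ when $\zeta_4$ is a nonsquare; you should fix this by choosing $a$ even (there is room in the interval $[\de(Q),\,n-2]$ except in boundary cases that are already among your small exceptions). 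The paper sidesteps this by using determinant-one elements of an embedded $\GL$ and, in the odd case, the eigenvalues $\pm1$; your route works, but the parity of $a$ must be addressed explicitly.
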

\begin{proof}
First assume that $G=\GL_n(\eps q)$. Recall that $\de(Q)=k+1$, where $n=2^k+a_{k-1}2^{k-1}+\cdots+ a_12+a_0$ is the $2$-adic expansion of $n$. From Lemma \ref{lem:GLunip} and Remark \ref{rem:GLevenunip}, we see that for $5\leq n\leq 8$ or for $n=2^k$ with $k\geq 3$, the statement holds. For $2\leq n\leq 3$, we see from the known character tables of $G$ and $[G, G]$ that $G$ has at least $2$ even-degree characters of different degrees that satisfy the statement. For $\GL_4(\eps q)$, we have one unipotent character of even degree, and considering a semisimple element $s\in G^\ast$ with eigenvalue $\zeta_4, \zeta_4^{-1}, 1, 1$, where $\zeta_4$ is a primitive $4$th root of unity, we have $\cent_{G^\ast}(s)\cong \GL_2(\eps q)\times X$, where $X$ is abelian. Then the two characters in $\mathcal{E}(G,s)$ have distinct even degrees distinct from that of the unipotent character of even degree. Further, these are trivial on the center since $s\in[G^\ast, G^\ast]$ and restrict irreducibly to $\SL_4(\eps q)$ since we see from the eigenvalues that $sz$ is not conjugate to $s$ for any $1\neq z\in\zent(G^\ast)$.

So, assume that $G=\GL_n(\eps q)$ with $n\geq 9$ not a power of $2$. We argue similarly to Proposition \ref{prop:classical}. Let $s$ be a semisimple element of $G^\ast\cong G$ whose only nontrivial eigenvalues are $-1$ with  multiplicity $2^k$. Then $s\in[G^\ast, G^\ast]$, so the characters in $\mathcal{E}(G, s)$ are trivial on $\zent(G)$. Further,   $\cent_{G^\ast}(s)\cong \GL_{2^{k}}(\eps q)\times X$ for a lower-rank linear or unitary group $X$, so $\mathcal{E}(\cent_{G^\ast}(s), 1)$ contains at least $k+1$ even-degree characters with distinct degree by Lemma \ref{lem:GLunip}. Hence the same is true for $\mathcal{E}(G,s)$ by the degree properties of Jordan decomposition. Further, from the eigenvalues we see that $s$ is not $G^\ast$-conjugate to $sz$ for any $1\neq z\in\zent(G^\ast)$, meaning that these must restrict irreducibly to $\SL_n(\eps q)$.

Now let $G$ be one of the symplectic or special orthogonal groups listed, and recall that $\de(Q)\leq k+1$ where $2n=2^k+\cdots+a_12+a_0$. 
First let $G\in\{\Sp_{2n}(q), \SO_{2n+1}(q)\}$. If $n=2$, then $G=\Sp_4(q)$ and we can check the statement from the known character table \cite{srinivasan}. If $G=\Sp_6(q)$ or $\SO_7(q)$, then as noted in Remark \ref{rem:GLevenunip}, there are more than 3 unipotent characters of distinct even degrees. If $n>3$, then there is a semisimple element $s\in [G^\ast, G^\ast]$ whose eigenvalues are $\pm1$ and with centralizer of the form  $X_1\times X_2$ with $X_1\in \{\Sp_6(q), \SO_7(q)\}$ and $X_2$ an appropriate classical group. Recall from Remark \ref{rem:GLevenunip} that there are then four distinct degrees of unipotent characters in $\Irr_2(X_1)$. We note that two of these satisfy $\chi(1)_q=q$, and two satisfy $\chi(1)_q=q^4$. Then taking the eight unipotent characters obtained by tensoring these with $1_{X_2}$ and $\mathrm{St}_{X_2}$ yields eight distinct unipotent character degrees in $\mathcal{E}(\cent_{G^\ast}(s), 1)$, and hence eight distinct even character degrees in $\mathcal{E}(G,s)$. Further, by comparison of eigenvalues, we see $sz$ is not conjugate to $s$ for $1\neq z\in\zent(G^\ast)$ so these characters are irreducible on restriction to $[G, G]$, except possibly if $G^\ast=\Sp_{12}(q)$. In the latter case, we can instead consider $s$ to have eigenvalues $\pm1$ but with $X_1\in\{\Sp_4(q), \SO_5(q)\}$. Then $sz$ is not conjugate to $s$ for $1\neq z\in\zent(G^\ast)$, $s\in [G^\ast, G^\ast]$, and $\Irr_2(X_1)$ contains two unipotent characters with distinct degrees;  the tensors with $1_{X_2}$ and $\mathrm{St}_{X_2}$ give 4 distinct even-degree unipotent characters in $\cent_{G^\ast}(s)$, and hence four distinct even degree characters in $\Irr_2(G)\cap\mathcal{E}(G,s)$ trivial on the center and irreducible on restriction to $[G,G]$.

Hence we may assume that $k>7$.
Let $s_1, s_2\in G^\ast\in\{\SO_{2n+1}(q), \Sp_{2n}(q)\}$ have nontrivial eigenvalues $\zeta_4$ and $-\zeta_4$, each with multiplicity $2^{k-2}$ in $s_1$ and multiplicity $2^{k-3}$ in $s_2$. Then $\cent_{G^\ast}(s_1)\cong \GL_{2^{k-2}}(\eta q) \times X_1$ and $\cent_{G^\ast}(s_2) \cong \GL_{2^{k-3}}(\eta q) \times X_2$ for some appropriate classical groups $X_i$ and $\eta\in\{\pm1\}$. Since $k-3> 3$, we have by Lemma \ref{lem:GLunip} at least $k-1$ characters of distinct even degree in $\mathcal{E}(\cent_{G^\ast}(s_1), 1)$ and at least $k-2$ characters of distinct even degree in $\mathcal{E}(\cent_{G^\ast}(s_2), 1)$, yielding at least $k-1$, resp. $k-2$ characters of distinct even degree in $\Irr_2(G)\cap \mathcal{E}(G, s_1)$, resp. $\Irr_2(G) \cap\mathcal{E}(G, s_2)$ by degree properties of Jordan decomposition. 
Considering the eigenvalue multiplicities, we see $s_i$ cannot be conjugate to $s_iz$ for $1\neq z\in\zent(G^\ast)$, so the characters restrict irreducibly to $[G,G]$. Similarly, we see $s_1^\sigma$ cannot be $G^\ast$-conjugate to $s_2z$ for $ z\in\zent(G^\ast)$ and $\sigma\in \Aut(G^\ast)$, yielding that no two of these $k-1+k-2>k+1$ characters  $\chi, \chi'$ can satisfy $\chi^\sigma=\chi'\beta$ for $\beta\in\Irr(G/[G, G])$ and $\sigma\in\Aut(G)$. We also note that as elements of $\GL_{n}(\eta q)$ embedded into $G^\ast$, both $s_1$ and $s_2$ have determinant 1, so that each are in $[G^\ast, G^\ast]$, and hence all of the characters are trivial on $\zent(G)$.

We finally consider $G=\SO_{2n}^\pm(q)$ with $2n=2^k+\cdots +a_0$ and note that $G^\ast\cong G$ here.  We remark that $\SO_8^+(q)$ (or any Lie-type group of type $\type{D}_4(q)$) has at least 6 distinct even unipotent degrees. On the other hand, $\SO_8^-(q)$ has two distinct even unipotent degrees. In this case, we may consider a semisimple element $s\in \SO_8^-(q)\cong G^\ast$ with eigenvalues $\pm1$ and centralizer of type $\type{D}_3$ or $\tw{2}\type{D}_3$, which yields an additional   even-degree character trivial on the center that restricts irreducibly. Since $G^\ast$ has a self-normalizing Sylow $2$-subgroup, any semisimple $s\in G^\ast$ of odd order will also yield a semisimple character of $G$ with even degree trivial on the center and irreducible on restriction. Further, we again see no two of these four characters can satisfy $\chi^\sigma=\chi'\beta$ for $\beta\in\Irr(G/[G, G])$ and $\sigma\in\Aut(G)$. 

Now assume $n>4$. Then $G^\ast$ has a semisimple element $s$ lying in $[G^\ast, G^\ast]$ with $-1$ appearing as an eigenvalue with multiplicity 8, whose centralizer has type $\type{D}_4.\type{D}_{n-4}^\pm$. Then we see $\mathcal{E}(G,s)$ has at least $6$ characters of distinct even degrees, and these are trivial on the center.  Further, considering the eigenvalues again yields $sz$  is not conjugate to $s$ for a nontrivial $z\in \zent(G^\ast)$, except possibly when $n=8$. In this case we can instead consider $s$ with eigenvalues $\pm1$ and centralizer type $\type{D}_3.\type{D}_5^{\pm}$. Groups of type $\type{D}_3=\type{A}_3$ have one unipotent character of even degree, and those of type $\type{D}_5^\pm$ have at least 5 unipotent characters of distinct degrees. Then their are at least five characters in $\mathcal{E}(\cent_{G^\ast}(s), 1)$ with distinct even degrees, and hence again at least five characters in $\Irr_2(G)\cap \mathcal{E}(G,s)$ with distinct degrees. As before, these are again trivial on the center and restrict irreducibly to $[G,G]$. 

Then we are left to consider the case $k\geq 6$. Here we consider $s_1$ and $s_2$ as before, with $\zeta_4$ and $\zeta^{-1}$ each having multiplicities $2^{k-2}$, resp. $2^{k-3}$. The same argument as before yields $k-1+k-2> k+1$ characters of even degree with the desired properties.
\end{proof}
\begin{cor}\label{cor:classicalsimple}
Let $S$ be a simple group such that $S=\operatorname{PSL}_n(\epsilon q)$ with $n\geq 2$, $\operatorname{P\Omega}_{2n+1}(q)$ with $n\geq 3$, $\operatorname{PSp}_{2n}(q)$ with $n\geq 2$, or $\operatorname{P\Omega}_{2n}^\pm(q)$ with $n\geq 4$. Let $p$  be a prime not dividing $q$. Then $\mathrm{dl}(P)\leq n_p^\ast(S)$, where $P\in\Syl_p(S)$.
\end{cor}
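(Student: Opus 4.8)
The plan is to deduce the statement from Propositions~\ref{prop:classical} and~\ref{prop:classical2} by transferring the characters produced there from the matrix group $G$ down to its simple quotient $S$. Given $S$ as in the statement, I would first write $S=[G,G]/\zent([G,G])$, where $G$ is the corresponding group among $\GL_n(\epsilon q)$, $\Sp_{2n}(q)$, $\SO_{2n+1}(q)$ and $\SO_{2n}^\pm(q)$; note that $\zent([G,G])\le\zent(G)$ in each case (with $[G,G]=G$ in the symplectic case), and that $p=2$ forces $q$ odd here since $p\nmid q$. We may assume $p\mid|S|$, as otherwise $P=1$. Since a Sylow $p$-subgroup $P$ of $S$ is a homomorphic image of a Sylow $p$-subgroup of $[G,G]$, which is in turn contained in a Sylow $p$-subgroup $Q$ of $G$, we have $\mathrm{dl}(P)\le\mathrm{dl}(Q)$; so it will suffice to exhibit $\mathrm{dl}(Q)$ characters in $\Irr_p(S)$ lying in pairwise distinct $\Aut(S)$-orbits.

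For $p$ odd I would split according to whether $Q$ is abelian. If it is, then so is $P$, and the It\^o--Michler theorem (\cite[Theorem~7.1]{Navarro_b2}) gives $\mathrm{dl}(P)=1\le|\cd_p(S)|\le n_p^\ast(S)$, since $S$ is simple with non-normal Sylow $p$-subgroup. If $Q$ is nonabelian, Proposition~\ref{prop:classical} supplies $\chi_1,\dots,\chi_{\mathrm{dl}(Q)}\in\Irr_p(G)$ of pairwise distinct degrees that are trivial on $\zent(G)$ and restrict irreducibly to $[G,G]$ (this is recorded there for $\GL_n(\epsilon q)$, is automatic for $\Sp_{2n}(q)$, and in the orthogonal cases follows from the construction, since the chosen semisimple element $s$ is not $G^\ast$-conjugate to $sz$ for the relevant $1\ne z\in\zent(G^\ast)$). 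Restricting to $[G,G]$ and using triviality on $\zent([G,G])$, these descend to $\psi_i\in\Irr(S)$ with $\psi_i(1)=\chi_i(1)$ divisible by $p$; having pairwise distinct degrees, the $\psi_i$ lie in distinct $\Aut(S)$-orbits, so $n_p^\ast(S)\ge\mathrm{dl}(Q)\ge\mathrm{dl}(P)$.

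For $p=2$, Proposition~\ref{prop:classical2} supplies $\chi_1,\dots,\chi_{\mathrm{dl}(Q)}\in\Irr_2(G)$, trivial on $\zent(G)$, restricting irreducibly to $[G,G]$ as $\bar\chi_i:=\chi_i|_{[G,G]}$, and with $\chi_i^\sigma\ne\chi_j\beta$ for all $i\ne j$, $\sigma\in\Aut(G)$ and $\beta\in\Irr(G/[G,G])$. These descend to $\psi_i\in\Irr_2(S)$ of degree $\chi_i(1)$, and I would then argue that no $\alpha\in\Aut(S)$ sends $\psi_i$ to $\psi_j$ with $i\ne j$, as follows. Every automorphism of $S$ is induced by an automorphism of $G$ (this follows from the standard description of $\Aut$ of the finite classical groups in terms of inner-diagonal, field and graph automorphisms, modulo a direct check of the finitely many groups with exceptional Schur multipliers and of the triality automorphisms of $\operatorname{P\Omega}_8^+(q)$); so $\alpha$ is induced by some $\sigma\in\Aut(G)$, and then $\chi_i^\sigma$ and $\chi_j$ are irreducible characters of $G$ with the same irreducible restriction $\bar\chi_j$ to $[G,G]\lhd G$. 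Since $G/[G,G]$ is abelian, Gallagher's theorem (\cite[Corollary~6.17]{Isaacs}) then forces $\chi_i^\sigma=\chi_j\beta$ for some $\beta\in\Irr(G/[G,G])$, contradicting the choice of the $\chi_i$. Hence $n_2^\ast(S)\ge\mathrm{dl}(Q)\ge\mathrm{dl}(P)$.

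I expect the main obstacle to be this last automorphism bookkeeping: one needs that every automorphism of $S$ lifts to the quasisimple group $[G,G]$ and extends to $G$, which rests on the precise structure of the automorphism groups of the finite classical groups, and one must deal separately with the handful of groups having exceptional Schur multipliers and with the triality of $\operatorname{P\Omega}_8^+(q)$; one must also keep track of the low-dimensional exceptional isomorphisms (such as $\operatorname{PSp}_4(q)\cong\operatorname{P\Omega}_5(q)$) when choosing $G$. Beyond these points the argument is essentially formal, the representation-theoretic substance having already been supplied by Lemma~\ref{lem:GLunip} and Propositions~\ref{prop:classical}--\ref{prop:classical2}.
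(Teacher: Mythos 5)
Your proposal is correct and follows essentially the same route as the paper: reduce to Propositions~\ref{prop:classical} and \ref{prop:classical2}, dispose of the abelian Sylow case via It\^o--Michler, and descend the constructed characters to $S$, using distinct degrees (odd $p$) respectively the condition $\chi_i^\sigma\neq\chi_j\beta$ plus a Clifford/Gallagher argument ($p=2$) to get distinct $\Aut(S)$-orbits. The one loose end you defer, triality for $S=\operatorname{P\Omega}_8^+(q)$ with $p=2$, is harmless because in that case the characters supplied by Proposition~\ref{prop:classical2} are unipotent of pairwise distinct even degrees, so they lie in distinct orbits under the full automorphism group without any lifting of automorphisms to $G$.
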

\begin{proof}
    We may write $S=H/\zent(H)$ for some $H\leq G$ with $G$ as in Propositions \ref{prop:classical} or \ref{prop:classical2}. Let $Q\in\Syl_p(G)$. If $Q$ is abelian, then so is $P$ and the statement follows from the It{\^o}--Michler theorem as before.

    Then we assume $Q$ is not abelian.     
    By Propositions \ref{prop:classical} and \ref{prop:classical2}, we have $\mathrm{dl}(Q)$ characters $\wt\chi_1, \ldots, \wt\chi_{\mathrm{dl}(Q)}\in\Irr_p(G/\zent(G))$ such that $\wt\chi_i^\sigma\neq \wt\chi_j\beta$ for any $\sigma\in\Aut(G)$ and $\beta\in\Irr(G/[G,G])$. Then the characters $\wt\chi_i$ must yield at least $\mathrm{dl}(Q)\geq \de(P)$ characters $\chi_1, \ldots, \chi_{\mathrm{dl}(Q)}\in\Irr(S)$ that must come from distinct $\Aut(S)$-orbits. It suffices now to argue that these constituents can be chosen to have degree divisible by $p$.

    If $p$ is odd and $S$ is not $\operatorname{PSL}_n(\epsilon q)$, then we have $[G:H]\leq 2$, $|\zent(H)|= |\zent(G)|\leq 2$, and certainly the $\chi_i$ have degree divisible by $p$.
If  $p$ is odd and $S=\operatorname{PSL}_n(\epsilon q)$, then $H=\SL_n(\epsilon q)$ and we note that $\chi_i=\wt\chi_i|_S\in\Irr_p(S)$ for $1\leq i\leq \mathrm{dl}(Q)$, as guaranteed by Proposition \ref{prop:classical}. Similarly, if $p=2$, Proposition \ref{prop:classical2} guarantees the $\wt\chi_i$ are again irreducible when restricted to $S$, completing the proof.
\end{proof}

\subsection{Classical groups in defining characteristic}

We finally consider the case of classical groups whose underlying characteristic is the same as $p$.
\begin{lem}\label{lem:classicaldefining}
Let $q$ be a power of the prime $p$ and let $S$ be a simple group such that  $S=\operatorname{PSL}_n(\epsilon q)$ with $n\geq 2$, $\operatorname{P\Omega}_{2n+1}(q)$ with $n\geq 3$, $\operatorname{PSp}_{2n}(q)$ with $n\geq 2$, or $\operatorname{P\Omega}_{2n}^\pm(q)$ with $n\geq 4$. Then $n_p^*(S)\ge \de(P)$, where $P\in\Syl_p(S)$. 
\end{lem}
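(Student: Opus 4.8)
The plan is to exploit two facts that pull in opposite directions: in defining characteristic a Sylow $p$-subgroup $P$ of $S$ is (a subgroup closely related to) a maximal unipotent subgroup, so $\de(P)$ is only logarithmic in the rank; whereas $S$ has a large number of unipotent characters, all but one of which have degree divisible by $p$, and each of which forms its own $\Aut(S)$-orbit. Comparing these two quantities will give the inequality.

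Each $S$ in the statement embeds into $\PGL_N(q)$, where $N$ is the dimension of the natural module---namely $N=n$ (over $\FF_q$ for $\operatorname{PSL}_n(q)$, over $\FF_{q^2}$ for $\operatorname{PSU}_n(q)$), $N=2n+1$ for $\operatorname{P\Omega}_{2n+1}(q)$, and $N=2n$ for $\operatorname{PSp}_{2n}(q)$ and $\operatorname{P\Omega}_{2n}^\pm(q)$. Since $p\mid q$, the order of $\zent(\GL_N)$ is prime to $p$, so a Sylow $p$-subgroup $P$ of $S$ lifts isomorphically to a $p$-subgroup of $\GL_N(q)$ (or $\GL_n(q^2)$ in the unitary case). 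If $P$ is abelian---the only such case among the listed families being $\operatorname{PSL}_2(q)\cong\operatorname{PSU}_2(q)$---then the It\^o--Michler theorem gives $n_p^\ast(S)\ge|\cd_p(S)|\ge 1=\de(P)$ and we are done, so assume $P$ is nonabelian. Since $e=1$ when the defining characteristic is $p$, Lemma \ref{lem3} yields
\[
\de(P)\le\lfloor\log_2 N\rfloor+1 .
\]

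For the lower bound on $n_p^\ast(S)$, I would count unipotent characters. A nontrivial unipotent character $\rho$ of $S$ has degree divisible by $p$: writing $q=q_0^f$, the $q_0$-part of $\rho(1)$ is $q_0^{f\cdot a(\rho)}$ where $a(\rho)\ge 0$ is the $a$-invariant, and $a(\rho)=0$ precisely when $\rho$ is trivial; this is read off from the hook-length degree formula in type $\type{A}$ (as in the proof of Lemma \ref{lem:GLunip}) and from the symbol degree formulas in types $\type{B},\type{C},\type{D}$, cf.\ \cite[13.8, 13.9]{Carter}. By \cite[Theorem 2.5]{Malle}, unipotent characters of $S$ are $\Aut(S)$-invariant, so each nontrivial unipotent character is a singleton $\Aut(S)$-orbit in $\Irr_p(S)$, and these orbits are pairwise distinct since unipotent characters of the relevant quasisimple cover are trivial on its center and lie in pairwise distinct Lusztig series (hence stay distinct, and distinct from one another, on passing to $S$). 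Hence, with $u(S)$ the number of unipotent characters of $S$ and $W$ the Weyl group of the ambient algebraic group,
\[
n_p^\ast(S)\ \ge\ u(S)-1\ \ge\ |\Irr(W)|-1 ,
\]
since the principal Harish--Chandra series already contributes $|\Irr(W)|$ unipotent characters; here $|\Irr(W)|$ equals the number of partitions of $n$ in type $\type{A}_{n-1}$, the number of bipartitions of $n$ in types $\type{B}_n$ and $\type{C}_n$, and the corresponding count in type $\type{D}_n$ (with the relative Weyl group used in the twisted case $\tw{2}\type{D}_n$).

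It then suffices to verify $|\Irr(W)|-1\ge\lfloor\log_2 N\rfloor+1$. For large rank this is immediate, as $|\Irr(W)|$ grows at least as fast as the partition function of the rank while the right-hand side is logarithmic in the rank. I expect the main work to be the finite bookkeeping for small rank, where the estimate is tightest---for instance $\operatorname{PSL}_3(\eps q)$ and $\operatorname{PSU}_3(q)$, where both sides equal $2$---together with two orthogonal-type points: checking that for $\operatorname{P\Omega}_8^+(q)$ the bound $u(S)-1$ still beats $\lfloor\log_2 8\rfloor+1=4$ after the triality automorphism fuses some unipotent characters into common $\Aut(S)$-orbits, and confirming the degree divisibility and the count $u(S)$ for the twisted groups $\tw{2}\type{D}_n(q)=\operatorname{P\Omega}_{2n}^-(q)$. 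These small and twisted cases can in any event be verified directly against the known unipotent character degrees.
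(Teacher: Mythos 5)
Your overall strategy coincides with the paper's: bound $\de(P)$ logarithmically in the rank via the natural matrix embedding and Lemma \ref{lem3}, and beat this with a count of unipotent characters, whose number grows like the partition function of the rank. However, there is a genuine gap in your key divisibility step. It is \emph{not} true that every nontrivial unipotent character of $S$ has degree divisible by the defining characteristic, and the justification via the $a$-invariant is where this goes wrong: in types $\type{B}$, $\type{C}$, $\type{D}$ the generic degree of a unipotent character has the form $\frac{1}{n_\rho}\,q^{a(\rho)}\cdot m$ with $m$ prime to $q_0$ but with a denominator $n_\rho$ that is a power of $2$, so the $q_0$-part of the degree need not be $q_0^{f\,a(\rho)}$. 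When $q=2$ this cancellation really produces nontrivial unipotent characters of \emph{odd} degree for $S=\operatorname{PSp}_{2n}(2)\cong\operatorname{P\Omega}_{2n+1}(2)$; the correct statement is Malle's height-zero theorem \cite[Thm.~6.8]{Ma07}, which the paper cites, and the paper then treats $\operatorname{PSp}_{2n}(2)$ separately, subtracting the four nontrivial odd-degree unipotent characters before comparing with the logarithmic bound. Your estimate $n_p^\ast(S)\ge u(S)-1$ therefore fails as justified; the final inequality has enough slack that the same repair (correct citation plus the $q=2$ case analysis) salvages the argument, but as written the divisibility step is false.

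Two smaller inaccuracies. First, unipotent characters are not all $\Aut(S)$-invariant: the theorem you cite (\cite[Thm.~2.5]{Malle}) itself lists exceptions in type $\type{D}_n$ for \emph{every} $n$, where the order-two graph automorphism interchanges the two unipotent characters attached to each degenerate symbol, not only the triality fusion at $\type{D}_4$ that you flag; the paper instead invokes \cite[Thm.~4.5.11]{GM20} and allows orbits of size two, halving the count (again harmless numerically, but your ``singleton orbit'' claim is overstated). Second, all unipotent characters lie in the single series $\mathcal{E}(G,1)$, so ``pairwise distinct Lusztig series'' is not the reason they stay distinct on $S$; the correct reason, which the paper uses, is that unipotent characters are independent of isogeny type, trivial on the center, and restrict irreducibly to $[G,G]$. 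Finally, be aware that the small-rank bookkeeping you defer is genuinely needed (the paper does it for $n\le 5$ with L\"ubeck's data \cite{Luebeck}, also catching the non-simple $\operatorname{PSp}_4(2)$), since that is where the two sides of the inequality are closest.
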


\begin{proof}
As before, we may write $S=H/\zent(H)$ for some $H\leq G$ with $G$ one of the classical groups studied above.  Then we may view $P$ as a quotient of a subgroup of a Sylow $p$-subgroup of $\GL_n(\epsilon q)$, resp. $\GL_{2n+1}(q), \GL_{2n}(q), \GL_{2n}(q)$. Note then that from Lemma \ref{lem3}, we obtain a bound $$\de(P)\leq \lfloor \log_2(2n)\rfloor+1 \leq \log_2(n)+2,$$ which can be improved to $\de(P)\leq  \log_2(n)+1$ when $S=\operatorname{PSL}_n(\epsilon q)$.

For $n\leq 5$, we may use the list of character degrees for the corresponding adjoint group (which is then the group $\mathrm{InnDiag}(S)$ of inner-diagonal automorphisms of $S$) available at \cite{Luebeck} and the bound above for $\de(P)$ to check directly that $|\mathrm{cd}_p(\mathrm{InnDiag}(S))|\geq \de(P)$, except possibly for the case $\operatorname{PSp}_4(2)$, which is not simple and whose derived subgroup $\Alt_6$ has already been considered. Since $\mathrm{InnDiag}(S)/S$ is an abelian group with size prime to $p$, this shows $n_p^\ast(S)\geq \de(P)$ in these cases, as desired.
Hence, we may assume that $n\geq 6$.

For the moment, assume that  $S\not\in\{\operatorname{P\Omega}_{2n+1}(2), \operatorname{PSp}_{2n}(2)\}.$ Then by \cite[Thm.~6.8]{Ma07}, all nontrivial unipotent characters of $S$ are in $\Irr_p(S)$.
By \cite[Thm.~4.5.11]{GM20}, the unipotent characters lie in orbits of size at most two under $\Aut(S)$ for $\operatorname{P\Omega}_{2n}^+(q)$,
and orbits of size one otherwise. In all cases, the Steinberg character is $\Aut(S)$-invariant.

If $S=\operatorname{PSL}_n(\epsilon q)$, the number of unipotent characters is the number $p(n)$ of partitions of $n$, so we have $n_p^\ast(S)\geq p(n)-1$. In the remaining cases, there are at least $k(2,n)/2$ unipotent characters (where $k(2,n)$ is the class number of the reflection group $G(2,1,n)$), which is again at least $p(n)$. Hence it suffices to show that $({p(n)-2})/{2}+1={p(n)}/{2}\geq \log_2(n)+2$. 

By \cite[Corollary 3.1]{Maroti}, we know that \[p(n)>\frac{e^{2\sqrt{n}}}{14}.\]  It then suffices to know that  ${e^{2\sqrt{n}}}/{14}\ge 2\log_2(n)+4,$ which indeed holds for $n\geq 6$. 

Now assume that $S=\operatorname{PSp}_{2n}(2)$ (which is the same as $\operatorname{P\Omega}_{2n+1}(2)$), with $n\geq 6$ and $p=2$. Here there are four nontrivial unipotent characters with odd degree. Since in this case unipotent characters are $\Aut(S)$-invariant, it suffices here to show that $p(n)-5\geq \log_2(n)+2$. Since ${e^{2\sqrt{n}}}/{14}\geq \log_2(n)+7$ for $n\geq 7$, and the case $\operatorname{Sp_{12}}(2)$ can be checked directly in GAP, this completes the proof. 
\end{proof}

\medskip

\begin{proof}[Proof of Theorem \ref{th:simple groups}]
Theorem \ref{th:simple groups} now follows by combining Lemmas \ref{lem:sporadic},  \ref{lem:Alternating},  \ref{lem:exceptional}, and \ref{lem:classicaldefining} with Corollary \ref{cor:classicalsimple}.
\end{proof}

\begin{proof}[Proof of Theorem \ref{th:main}]
Theorem \ref{th:main} follows from Theorems \ref{th:simple groups} and \ref{mathcal C}.
\end{proof}

\section{Block version} \label{s:block}

We now prove Theorem \ref{maink}.  We begin with an easy lemma.  The following is part of the proof of Theorem 3.2 of \cite{brauergraph1}, but as the proof is short, we include it here.

\begin{lem}\label{opd}  Suppose $G$ is $p$-solvable and $N = \opd(G)$ is central in $G$.  Let $P = \opp(G)$.  Then $$NP'/P' = \opd(G/P').$$
\end{lem}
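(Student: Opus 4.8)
The plan is to prove the two containments $NP'/P'\subseteq\opd(G/P')$ and $\opd(G/P')\subseteq NP'/P'$ separately; the first is routine, and the real content is the second. For the easy one: since $P=\opp(G)\lhd G$, its characteristic subgroup $P'$ is normal in $G$, hence so is $NP'$; as $N$ is a $p'$-group and $P$ a $p$-group, $N\cap P'=1$, so $NP'/P'\cong N$ is a $p'$-group, and being normal in $G/P'$ it lies in $\opd(G/P')$.

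For the hard containment, I would write $\opd(G/P')=M/P'$ with $M\lhd G$. Since $M/P'$ is a $p'$-group and $P'$ is a $p$-group, $P'$ is a normal Sylow $p$-subgroup of $M$, so every $p$-subgroup of $M$ lies in $P'$; in particular $M\cap P=P'$. By Schur--Zassenhaus, $M=P'H$ for a complement $H$, which is a $p'$-group. The crucial step will be to show that $H$ centralises $P$: as $P\lhd G$ we have $[H,P]\subseteq[M,P]\subseteq M\cap P=P'\subseteq\Phi(P)$, using $\Phi(P)=P'P^p$, so $H$ acts trivially on $P/\Phi(P)$; then coprime action of the $p'$-group $H$ on the $p$-group $P$ gives $P=\cent_P(H)[P,H]\subseteq\cent_P(H)\Phi(P)$, forcing $P=\cent_P(H)$, that is, $[H,P]=1$.

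Once $H$ centralises $P$ it centralises $P'\subseteq P$, so $M=P'\times H$; hence $H=\opd(M)$ is characteristic in $M$, thus normal in $G$, and so $H\subseteq\opd(G)=N$. Therefore $M=P'H\subseteq NP'$, which together with $NP'\subseteq M$ from the first step gives $M=NP'$, i.e. $\opd(G/P')=NP'/P'$. The main obstacle is the coprime-action step showing $H$ centralises $P$; everything else is bookkeeping with normal subgroups. (It is perhaps worth noting that this argument uses neither the $p$-solvability of $G$ nor the centrality of $N$; these belong to the ambient setup of \cite{brauergraph1}. Alternatively, one could reach the conclusion via the Hall--Higman lemma $\cent_G(\opd(G)\opp(G))\subseteq\opd(G)\opp(G)$: since $N$ is central, $\opd(G)\opp(G)=N\times P$, so $H$, centralising both $N$ and $P$, lies in $N\times P$ and, being a $p'$-group, inside $N$.)
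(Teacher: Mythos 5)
Your proof is correct, and it takes a genuinely different route from the paper's. The paper argues by contradiction: writing $L/P'=\opd(G/P')$ and assuming $NP'<L$, it takes a $p$-complement $H$ of $L$, notes that $H$ centralizes $P/P'$, uses the coprime fixed-point/Frattini mechanism (the same one you use) to conclude that $H$ centralizes all of $P$, and then contradicts the Hall--Higman lemma applied to $G/N$ --- which is exactly where the $p$-solvability of $G$ and the centrality of $N$ (needed to identify $\opp(G/N)$ with $NP/N$) enter. You instead prove the nontrivial containment directly: for the preimage $M$ of $\opd(G/P')$ you get $M\cap P=P'$, take a Schur--Zassenhaus complement $H$, note $[H,P]\le [M,P]\le M\cap P=P'\le\Phi(P)$ (valid since $M$ and $P$ are both normal in $G$), and the coprime action of $H$ being trivial on $P/\Phi(P)$ forces $P=\cent_P(H)$; then $M=P'\times H$, so $H$ is a characteristic Hall $p'$-subgroup of the normal subgroup $M$, hence normal in $G$ and contained in $\opd(G)=N$, giving $M=NP'$. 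All of these steps check out. What your version buys is the avoidance of Hall--Higman and of the contradiction framework, and --- as you correctly observe --- it dispenses with both the $p$-solvability of $G$ and the centrality of $N$, so it establishes $\opd(G/P')=\opd(G)\opp(G)'/\opp(G)'$ for an arbitrary finite group; your closing remark also correctly reconstructs the paper's Hall--Higman endgame as an alternative in the ambient setting. The paper's argument is slightly shorter in its $p$-solvable context, but yours is more elementary and more general.
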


\begin{proof}  Note that $NP/N = \opp(G/N)$, so we write $M = NP = N \times P$.  Write $K = NP'$, so that clearly $$K/P' \leq \opd(G/P').$$ Let $L/P' = \opd(G/P')$ and assume to get a contradiction that $K < L$.  It follows that if $H \leq L$ is a $p$-complement, then $H > N$.

We have that $L/P'$ centralizes $P/P'$ and thus $H$ centralizes $P/P'$.  Let $C$ be the subgroup of elements of $P$ that are fixed by $H$.  As fixed points come from fixed points in coprime actions, we see that $P = CP'$.  As $P$ is nilpotent, we have that $P' \leq \Phi(P)$.  Thus $P = CP' \leq C \Phi(P) \leq P$, and thus $P = C\Phi(P)$.  It follows that $C = P$, so that $H$ fixes all of $P$.  However, this contradicts the Hall-Higman lemma (see for instance \cite{group}) applied to $G/N$.  This contradiction completes the proof.
\end{proof}

\begin{proof}[Proof of Theorem \ref{maink}]  We work by induction on $|G : \opd(G)| + |D|$.  Let $N = \opd(G)$ and suppose $\alpha \in \Irr(N)$ is covered by $B$.  If $\alpha$ is not invariant in $G$, we let $T = G_{\alpha} < G$.  By the Fong-Reynolds theorem (see for instance \cite{Navarro_b1}), there is a block $B_0$ of $T$ such that a defect group for $B_0$ is a defect group for $B$, and character induction is a bijection from $B_0$ to $B$.  By the inductive hypothesis, $\dlen(D) \leq n_p(B_0) + 1$.  As each character of positive height in $B_0$ induces to a distinct character of positive height in $B$, it then follows that $\dlen(D) \leq n_p(B) + 1$.

Thus we may assume that $\alpha$ is invariant in $G$, and thus it follows from the Fong reduction (see for instance Theorem 10.20 of \cite{Navarro_b1}) that $D$ is a full Sylow $p$-subgroup of $G$ and $B$ consists of all of the characters (both ordinary and Brauer) lying over $\alpha$.  Note that the characters of positive height in $B$ are now exactly the characters of $p$-singular degree in $B$.  By replacing the triple $(G, N, \alpha)$ with an isomorphic character triple, if necessary (which certainly preserves the defect group and the number of $p$-singular characters lying over $\alpha$) we may assume that $N = \opd(G)$ is central in $G$.  

Write $P = \opp(G)$ and note that if we set $M = N \times P$, then $M/N = \opp(G/N)$.  We see that $M = {\bf{F}}(G)$, and if we define $F_i = {\bf{F}}_i(G/M)$, then $F_1/M$ is a $p'$-group.

We now reduce to the case that $P$ is abelian.  Assume that $P$ is not abelian, and note that by Lemma \ref{opd} we have that $NP'/P' = \opd(G/P')$.  Let ${\widehat{\alpha}}$ denote the unique $p'$-special extension of $\alpha$ to $NP'$, and we may consider ${\widehat{\alpha}}$ as an invariant character of $NP'/P'$.  Thus we may consider the block $b'$ of $G/P'$ consisting of characters lying above ${\widehat{\alpha}}$, and note that every character in $b'$ is also in $B$.  Of course, $b'$ has defect group $D/P'$.  By the inductive hypothesis, $$\dlen(D/P') \leq n_p(b') + 1.$$   

By Taketa's theorem, we see that $\dlen(P) - 1 \leq \cd_p(P)$.  Suppose $\beta$ is a $p$-singular character of $P$.  Then $\alpha \times \beta \in \Irr(M)$ does not have $P'$ in the kernel, and thus there exists a character $\chi$ in $\Irr(B)$ lying above $\alpha \times \beta$ that is $p$-singular and does not have $P'$ in its kernel, and thus is not counted by $n_p(b')$. Thus we have $$n_p(B) \geq n_p(b') + \cd_p(P) \geq \dlen(D/P') - 1 + \dlen(P) - 1,$$ which in turn is equal to  $$\dlen(D/P') + \dlen(P') - 1 \geq \dlen(D) - 1,$$ and we are done if $P$ is not abelian.

Now let $K/P = \opd(G/P)$, so that $P \leq M \leq K$ and $M \leq F_1 \leq K$.  As $D$ is a full Sylow $p$-subgroup of $G$, there exists a Brauer character $\varphi \in \ibr(B)$ of $p'$-degree, and we let $\chi \in \Irr(B)$ be the $p'$-special lift (see \cite{isaacssolvablebook}) of $\varphi$, so that $\chi \in \Irr(G/P)$ and $\varphi \in \ibr(G/P)$.  Note that the defect group of the block $b$ of $G/P$ containing $\chi$ is a full Sylow $p$-subgroup of $G/P$.  Of course, any character in $b$ is also a character in $B$ (with $P$ in its kernel). By the inductive hypothesis, we see that $$\dlen(D/P) \leq n_p(b) + 1.$$  As $\dlen(D) \leq \dlen(D/P) + 1$, then it is enough to show that there is at least one $p$-singular character in $B$ that does not have $P$ in its kernel. 

Suppose, then, for every $1 \neq \lambda \in \Irr(P)$, we have that every constituent of $(\alpha \times \lambda)^G$ has $p'$-degree.  As there must be at least one such nontrivial character $\lambda \in \Irr(P)$, it follows by Theorem A of \cite{gluckwolf} that $G/M$ has an abelian Sylow $p$-subgroup, and thus we have $\dlen(D) \leq 2$ and we are done.
\end{proof}

\end{document}